\definecolor{hanblue}{rgb}{0.27, 0.42, 0.81}
\definecolor{red}{rgb}{1.0, 0.0, 0.0}
\newcommand{\Tau}{\mathcal{T}}
\renewcommand{\P}{\mathcal{P}}
\newcommand{\J}{\mathcal{J}}
\newcommand{\R}{\mathbb{R}}
\newcommand{\N}{\mathcal{N}}
\newcommand{\E}{\mathcal{E}}
\newcommand{\F}{\mathcal{F}}
\newcommand{\eps}{\varepsilon}
\newcommand{\Sing}{\mathrm{Sing}}
\newcommand{\Reg}{\mathrm{Reg}}
\numberwithin{equation}{section}
\theoremstyle{plain}
\newtheorem{thm}{Theorem}[section]
\newtheorem{lem}[thm]{Lemma}
\newtheorem{prop}[thm]{Proposition}
\newtheorem{cor}[thm]{Corollary}
\newtheorem{ackn}{Acknowledgments\!}
\theoremstyle{definition}
\newtheorem{defn}[thm]{Definition}
\newtheorem{prob}[thm]{Problem}
\theoremstyle{remark}
\newtheorem{rem}[thm]{Remark}
\newtheorem{Example}[thm]{Example}
\numberwithin{equation}{section}
\definecolor{deepjunglegreen}{rgb}{0.0, 0.29, 0.29}
\definecolor{hanblue}{rgb}{0.27, 0.42, 0.81}
\definecolor{yellow(ncs)}{rgb}{1.0, 0.83, 0.0}
\definecolor{brightpink}{rgb}{1.0, 0.0, 0.5}
\def\centerarc[#1](#2)(#3:#4:#5)
\title{Degenerate elastic networks}
\author{Giacomo Del~Nin
\footnote{Giacomo.Del-Nin@warwick.ac.uk, Mathematics Institute, University of Warwick, 
Zeeman Building, CV4 7HP Coventry, UK.}
 \and Alessandra Pluda
\footnote{alessandra.pluda@unipi.it, Dipartimento di Matematica, Universit\`a di Pisa, Largo Bruno Pontecorvo 5, 56127 Pisa, Italy.}
\and Marco Pozzetta
\footnote{pozzetta@mail.dm.unipi.it, Dipartimento di Matematica, Universit\`a di Pisa, Largo Bruno Pontecorvo 5, 56127 Pisa, Italy.}
}
\begin{document}

\maketitle

\begin{abstract}
\noindent We minimize a linear combination of the length and the $L^2$-norm of the curvature among networks in $\mathbb{R}^d$ belonging to a given class determined by the number of curves, the order of the junctions, and the angles between curves at the junctions. Since this class lacks compactness, we 
characterize the set of limits of sequences of networks bounded in energy, providing an explicit representation of the relaxed problem. This is expressed in terms
of the new notion of degenerate elastic networks that, rather surprisingly, involves only the properties of the given class, without reference to the curvature. In the case of $d=2$ we also give an equivalent description of degenerate elastic networks by means of a combinatorial definition easy to validate by a finite algorithm. Moreover we provide examples, counterexamples, and additional results that motivate our study and show the sharpness of our characterization.
\end{abstract}

\textbf{MSC (2010)}: 49J45, 35A15  (primary); 
49Q10,  53A04  (secondary).

\textbf{Keywords}: Networks, relaxation, elastic energy, singular structures.

%

\section{Introduction}

A regular network $\mathcal{N}$ is a connected set in $\mathbb{R}^d$ composed of $N$
regular curves $\gamma^i$ of class $H^2$  that meet at their endpoints
in junctions of possibly different order. 
Moreover the angles at the junctions are assigned by a fixed set of directions $\mathcal{D}$ 
as we will define more precisely in Definition \ref{angolifissati}.
The \emph{elastic energy functional} $\mathcal{E}$
for a network $\mathcal{N}$ is given by
\begin{equation}\label{Def:energiaintro}
\mathcal{E}\left(\mathcal{N}\right):=
\sum_{i=1}^N\left(\int_{\gamma^{i}} \vert \vec{k}^i\vert^{2} \,\mathrm{d}s
+\ell(\gamma^i)\right)\,,
\end{equation}
where $\vec{k}^i$ is the curvature, $s$ the arclength parameter 
and $\ell(\gamma^i)$ is the length
of the curve $\gamma^i$.\\

The elastic energy functional has a long history.
Already at the times of Galileo scientists
tried to model elastic rods and strings, looking
for 
equations for equilibrium of moments and forces.
The idea to relate the curvature 
of the fiber of the beam to the bending moment  came only later
when, in 1691, Jacob Bernoulli  proposed to model the bending energy 
of thin inextensible elastic rods
with a functional involving the curvature.
Several authors refer to the functional~\eqref{Def:energiaintro} as
Euler Elastic energy in honour of Euler (while we will simply call it elastic energy)
who solved the problem of minimizing 
the potential energy of the elastic laminae
using  variational techniques. 
Even nowadays the elastic energy
appears in several mechanical and physical models 
(c.f.\cite{Truesdell}) 
and in imaging sciences, see for instance~\cite{Mumford1994}.

We are interested in the minimization of the  functional $\mathcal{E}$
among networks with fixed topology and with fixed 
angles at the junctions assigned by $\mathcal{D}$.

Notice that the length 
$\ell(\gamma^i)$ 
of each curve of a regular network
is strictly positive because the curves are regular by assumption.
This property is not preserved by sequences with uniformly bounded energy:
along a sequence of networks $\{\N_n\}_{n\in\mathbb{N}}$ with uniformly bounded energy
 (or even a minimizing sequence) 
 the length of a curve may go to zero as $n\to\infty$,
producing a  ``degenerate limit" which is no longer a regular network.
Hence a remarkable issue is to understand this lack of compactness 
of sequences with bounded energy in minimization problems
\footnote{By compactness we mean sequential compactness, where 
the natural notion of convergence for the sequences of networks is the 
weak $H^2$ convergence.}.

Our first task is therefore to describe the class of 
limits of sequences (equibounded in energy) of regular networks.
In other words, we have to 
characterize the ``closure in energy"
of the class $\mathcal{C}_{\mathrm{Reg}}$ of regular networks.

Then, since the lack of compactness implies that it could not be 
possible to solve the original minimization problem among regular networks,
we relax it by considering
the lower semicontinuous envelope of the functional $\mathcal{E}$ with respect to the weak convergence in $H^2$.
Our second objective is to find 
an explicit formula of the  lower semicontinuous envelope.
We underline that 
the relaxation of the problem is necessary:
Example~\ref{esempiodeg} shows that 
also in  some very simple situations
minimizers are not regular networks.

\medskip

In Proposition~\ref{regolariconvergonoadeg}
and Proposition~\ref{recovery}
we characterize
the smallest compact class of (non--regular) networks in which 
the class of regular networks is
 ``dense in energy" 
in terms of a mixture of algebraic and 
combinatorial conditions that are easy to verify and rely on the 
topological assumptions on the
competitors. As the formulation of these conditions involves
some technicalities,  a detailed description is postponed to the second part of the introduction. 
We are then able to give a characterization of the lower semicontinuous envelope of  
$\mathcal{E}$ that strongly relies on the previous result (see Theorem~\ref{rilassato}).

\medskip

This last point creates a bridge between our problem and 
another natural research direction:
the extension of the elastic energy
(and more in general of functionals that depend on the curvature) to singular sets 
for which at least a \emph{weak} notion of curvature is well defined, varifolds for instance. 
The characterization of the relaxation of the original functional in such cases
turned out  to be particularly difficult (for results in this direction see 
for instance~\cite{bellettinipaolini,bellettinidalmasopaolini,bellettinimugnai04,bellettinimugnai07,
masnounardi13a,masnounardi13b,pozzetta2}). 
We notice that networks can be seen as a simple example of sets which are essentially singular
and they can be understood as a \emph{boundary} of a planar cluster of surfaces.
For problems in which one knows a priori that the boundary 
of minimal planar clusters is composed of a 
finite number of curves, it could be useful to define the curvature
of the cluster by means of~\eqref{Def:energiaintro}.
An advantage is that we have an easy integral representation 
of the lower semicontinuous envelope of $\mathcal{E}$.

To place our paper in the broader context, 
we mention the minimization of Willmore-type functionals among both Riemannian manifolds 
and singular structures is an extreme flourishing research field.
As the simplest possible example of singular structure we mention 
surfaces with boundary (possible references 
are~\cite{alessandronikuwert,deckerlnickgrunauroeger,dondl,
menzel,pozzetta,schaetzle}).

The main reason for which we got interested in this problem is a previous
study of its dynamical counterpart (see~\cite{garckemenzelpluda, garckemenzelpluda2}).
The study of the static problem has often revealed to be useful for the analysis 
of the asymptotic  behavior of the solutions of the associated gradient flow and of the singularities
that can appear during the evolution. 
Our analysis can be useful to understand the long
time behavior of the elastic flow of networks
whose curves meet at junctions 
with prescribed angles introduces in~\cite{baganu,garckemenzelpluda}.

\medskip

After this detour on the literature,
we explain more in detail 
what can happen along sequences of regular networks
$\{\N_n\}_{n\in\mathbb{N}}$ whose elastic energy is uniformly bounded.

\smallskip

\textbf{A model problem: Theta--networks}

\smallskip

A Theta--network is a regular 
$3$--network whose curves form equal angles at the two junctions 
(Figure~\ref{fig:theta}). 
The limit of a sequence of regular Theta--networks with uniformly bounded
energy may not exist in the class of regular networks and hence the minimization problem has 
some form of degeneracy that luckily is not completely wild:
the length of at most one curve can go to zero along the sequence,
becoming straighter and straighter. 
If this is the case the ``degenerate" limit network is composed
of two curves meeting at a quadruple point 
forming angles equal in pairs of $\tfrac{\pi}{3}$ and $\tfrac{2\pi}{3}$.

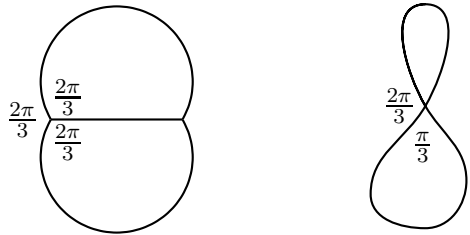
\begin{figure}[H]
\begin{center}
\begin{tikzpicture}[scale=1]
\draw[thick,color=black,scale=1,domain=-2.09: 2.09,
smooth,variable=\t,shift={(0,0)},rotate=0]plot({1.*sin(\t r)},
{1.*cos(\t r)}) ; 
\draw[thick, color=black,scale=1,domain=-2.09: 2.09,
smooth,variable=\t,shift={(0,-1)},rotate=180]plot({1.*sin(\t r)},
{1.*cos(\t r)}) ; 
\draw[thick]
(-0.87,-0.5)--(0.87,-0.5);
\draw[white]
(0,-2.3)--(0.1,-2.3);
\path[font=\normalsize]
(-0.3,-0.8)node[left]{$\tfrac{2\pi}{3}$}
(-0.3,-0.2)node[left]{$\tfrac{2\pi}{3}$}
(-0.9,-0.5)node[left]{$\tfrac{2\pi}{3}$};
\end{tikzpicture}\qquad\qquad\qquad
\begin{tikzpicture}[scale=1.8]
\draw[white]
(0,-1.1)--(1,-1.1);
\draw[thick]
(0,0)to[out= 60,in=0, looseness=1] (0,0.75)
(0,0.75)to[out= 180,in=120, looseness=1] (0,0)
(0,0)to[out= -60,in=90, looseness=1] (0.3,-0.55)
(0.3,-0.55)to[out= -90,in=0, looseness=1] (0,-0.9)
(0,0.75)to[out= 180,in=120, looseness=1] (0,0)
(0,0)to[out= -120,in=90, looseness=1] (-0.4,-0.65)
(-0.4,-0.65)to[out= -90,in=180, looseness=1] (0,-0.9);
\path[font=\normalsize]
(0,0)node[left]{$\tfrac{2\pi}{3}$}
(0.13,-0.3)node[left]{$\tfrac{\pi}{3}$};
\end{tikzpicture}
\end{center}
\caption{A Theta--network and a degenerate Theta--network}\label{fig:theta}
\end{figure}

\smallskip

The general case 
presents new interesting features with respect to the 
model problem of Theta--networks.
Firstly, one realizes that  much more than the length of one single curve can go to zero, 
since entire parts of the network can vanish as $n\to\infty$.
In order to keep track of the complexity of the network we introduce the notion
of underlying graph $G$.

\smallskip

\textbf{The underlying graph}

\smallskip

The oriented graph  $G$ is
composed of edges $E_i$ whose endpoints are identified in vertices
of possibly different order (Definition~\ref{defgraph}).
A network then is a pair graph--continuous map $\N=(G,\Gamma)$
with $\Gamma:G\to\mathbb{R}^d$ (Definition~\ref{network}).
The underlying graph $G$ captures the topology of the networks
and allows us to have a reminiscence of 
their structure even when some curves in a sequence of networks
$\N_n=(G,\Gamma_n)$ collapse to a point in the limit.

\smallskip

\textbf{A necessary angle condition}

\smallskip

We say that a curve $\gamma^i:=\Gamma\vert_{E_i}$ of a network is \emph{singular}
if it is a constant map,
and  regular if it is an immersion of class $H^2$. 
A network is singular if some of its curves are singular.

\begin{defn}[Outer tangents]\label{outertang}
Consider a regular curve of a network $\N$ parametrized by $\gamma^i$.
We define $\tau^{z,i}$, with $z\in\{0,1\}$, as
the outer tangent vector 
at the endpoint $\gamma^i(z)$ to the curve $\gamma^i$ given by
\begin{equation*}
\tau^{z,i}=(-1)^{z}\frac{\dot\gamma^i(z)}{\vert\dot\gamma^i(z)\vert}\,.
\end{equation*}
\end{defn}

We remark that the vector $\tau^{z,i}$ ``points inside'' the curve $\gamma^i$ at $\gamma^i(z)$ (see Figure \ref{fig:OuterTangents}). In this way, if $\gamma^i(z_i)=\gamma^j(z_j)$ is a junction point, then $\tau^{z_i,i}$ and $\tau^{z_j,j}$ ``point outwards'' \emph{with respect to the junction}. In this way the outer tangents are geometrically independent of the direction of the parametrization of the curves concurring at a junction.

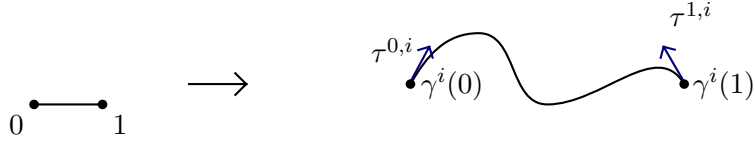
\begin{figure}[H]
\begin{center}
\begin{tikzpicture}[scale=0.9]
\draw[color=black, thick]
(-4.5,-0.3)--(-3.5,-0.3);
\draw[ thick]
(1,0)to[out= 60,in=180] (2,0.75)
(2,0.75)to[out= 0,in=180] (3,-0.3)
(3,-0.3)to[out= 0,in=120] (5,0);

\draw[thick, shift={(-1.4,0)}, rotate=-90, scale=1.7, black]
(0,0)to[out= -45,in=135, looseness=1] (0.1,-0.1)
(0,0)to[out= -135,in=45, looseness=1] (-0.1,-0.1)
(0,-0.5)to[out= 90,in=-90, looseness=1] (0,0.013);

\draw[thick, shift={(1.275,0.55)}, rotate=-28, scale=1.3, blue!50!black]
(0,0)to[out= -45,in=135, looseness=1] (0.1,-0.1)
(0,0)to[out= -135,in=45, looseness=1] (-0.1,-0.1)
(0,-0.5)to[out= 90,in=-90, looseness=1] (0,0.013);

\draw[thick, shift={(4.7,0.55)}, rotate=30, scale=1.3, blue!50!black]
(0,0)to[out= -45,in=135, looseness=1] (0.1,-0.1)
(0,0)to[out= -135,in=45, looseness=1] (-0.1,-0.1)
(0,-0.5)to[out= 90,in=-90, looseness=1] (0,0.013);

\fill[black]
(-4.5,-0.3)circle(2pt);
\fill[black]
(-3.5,-0.3)circle(2pt);

\fill[black]
(5,0) circle(2pt);
\fill[black]
(1,0) circle(2pt);

\path[font=\normalsize]

(-4.5,-0.6)node[left]{$0$}
(-3.5,-0.6)node[right]{$1$}

(6.2,0)node[left]{$\gamma^i(1)$}
(1,0)node[right]{$\gamma^i(0)$}
(1.2,0.5)node[left]{$\tau^{0,i}$}
(5.1,0.7)node[above]{$\tau^{1,i}$};
\end{tikzpicture}
\caption{Outer tangents at the endpoints of the curve $\gamma^i$.}\label{fig:OuterTangents}
\end{center}
\end{figure}

The following definition introduces the \emph{angle condition} for a possibly singular network. It will be restated more in detail in Definition \ref{angolifissatiRd}.

\begin{defn}[Angle condition]\label{angcondshort}
A (possibly) singular network $\mathcal{N}=(G, \Gamma)$
satisfies the angle condition if for every singular curve $\gamma^i$ there exist 
two unit vectors $\tau^{z,i}$, called its virtual tangents, with $z\in\{0,1\}$
such that $\tau^{0,i}=-\tau^{1,i}$ and
such that, at each junction, the tangent vectors (either real outer tangents or virtual ones)
coincide, up to a fixed rotation that only depends on the junction, with the directions 
assigned by a given set $\mathcal{D}$ (we shall state this concept more in detail in Definition~\ref{angolifissati} and Definition~\ref{angolifissatiRd}).
\end{defn}


Also motivated by~\cite{danovplu},
as a first attempt we defined the class of degenerate networks
as all the singular networks that satisfy the angle condition of Definition~\ref{angcondshort}.

Unfortunately this \emph{purely algebraic} condition on the angles 
turns out to be necessary but not sufficient to characterize the closure of the class 
$\mathcal{C}_{\mathrm{Reg}}$ as we show in the next example.

Consider the topology depicted in Figure~\ref{cicloproblematico}
on the left, with all junctions forming angles of $\tfrac{2\pi}{3}$, and define a singular network that collapses the red part to a single point. It is possible to do so satisfying the angle 
condition of Definition~\ref{angcondshort} (see the picture on the right) but no sequence of regular networks with finite energy and the same angle constraint could converge to the singular network on the right.

\begin{figure}[H]
\begin{center}
\begin{tikzpicture}[scale=0.8]
\draw[color=red, thick]
(1,-0.86)to[out= 0,in=0](-1.5,2)
(-1.5,2)to[out= 180,in=180](-1,0.86)
(-1,0.86)--(-0.5,0)
(-0.5,0)--(0.5,0)
(0.5,0)--(1,-0.86);
\draw[ thick]
(-0.5,1.72)to[out= 60,in=0] (-1.3,2.75)
(-1.3,2.75)to[out= 180,in=-120] (-1,-0.86)
(-1,0.86)--(-0.5,1.72)
(-0.5,0)--(-1,-0.86);
\draw[rotate=180,  thick]
(-0.5,1.72)to[out= 60,in=0] (-1.3,2.75)
(-1.3,2.75)to[out= 180,in=-120] (-1,-0.86)
(-1,0.86)--(-0.5,1.72)
(-0.5,0)--(-1,-0.86);
\path[font=\normalsize]
(-0.7,0.5)node[left]{$E_2$}
(-0.5,0.3)node[right]{$E_3$}
(1.12,-0.8)node[above]{$E_4$}
(1,1.5)node[left]{$E_1$};
\end{tikzpicture}
\quad
\begin{tikzpicture}[scale=0.7]
\draw[white]
(0,-2.7)--(0.1,-2.7);

\draw[thick]
(0,0)to[out= 60,in=0] (-0.8,1.03)
(-0.8,1.03)to[out= 180,in=180] (-1.5,-1.72)
(-1.5,-1.72)to[out= 0,in=-120] (-0.5,-0.86)
(-0.5,-0.86)--(0,0)
;

\draw[color=red, fill] (0,0) circle [radius=0.1];
\draw[rotate=180,  thick]
(0,0)to[out= 60,in=0] (-0.8,1.03)
(-0.8,1.03)to[out= 180,in=180] (-1.5,-1.72)
(-1.5,-1.72)to[out= 0,in=-120] (-0.5,-0.86)
(-0.5,-0.86)--(0,0)
;
\end{tikzpicture}
\end{center}
\caption{Starting from the represented graph $G$, it is possible to construct an example
from which one deduces that the angle condition is not sufficient to define the closure of the  class 
$\mathcal{C}_{\mathrm{Reg}}$. See Example \ref{condnecessarianonsuff} for details.}\label{cicloproblematico}
\end{figure}
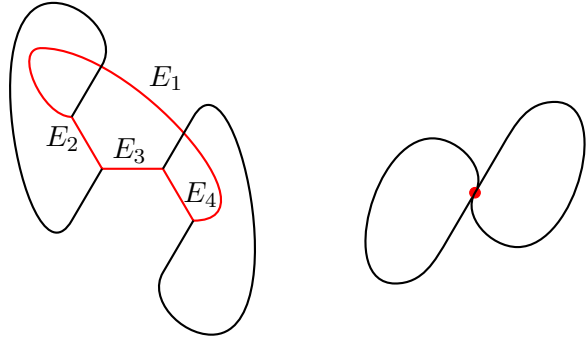

Indeed, denoting by $C$ the cycle in red in Figure \ref{cicloproblematico}, 
using Gauss--Bonnet Theorem (Remark \ref{rem:GaussBonnet})
one gets $\mathcal{E}(C)\geq \frac{c}{L(C)}$,
with $c>0$, for any immersion $\Gamma$ such that $(G,\Gamma)$ is a regular network, and 
thus the energy diverges if the length of the red loop goes to zero
(see Example~\ref{condnecessarianonsuff} and Remark \ref{rem:GaussBonnet} and for more details).

\smallskip

\textbf{Stratified straight subgraph}

\smallskip

To overcome this issue in the definition of the class,
we looked for an extra condition.  Again 
the underlying graph $G$ helps us.

\begin{defn}[Stratified straight subgraph]\label{shortstratstraightRd}
A subgraph $H$ of the underlying graph $G$ is stratified--straight 
if there exists a finite sequence of subgraphs,
called \emph{strata},
\begin{equation*}
\emptyset=H_q\subset H_{q-1}\subset\ldots\subset H_1\subset H_0=H
\end{equation*}
and maps $\Sigma_j:H_j\to \R^d$  
such that for $j=0,\ldots,q-1$ 
\begin{itemize}
\item the (sub)network  $(H_j,\Sigma_j)$ satisfies the angle condition~\ref{angcondshort}
with (real or virtual) tangent vectors coinciding with the ones associated to $(H_0,\Sigma_0)$ 
and its regular curves are straight segments;
\item the stratum $H_{j+1}$ is the union of the singular curves of $(H_j,\Sigma_j)$.
\end{itemize}
\end{defn}

In the following, segments are always understood to be \emph{straight} segments.\\

\noindent Denote by $H$ the subgraph given by
the union of the edges $E_i$ of $G$ such that the curves $\gamma^i=\Gamma\vert_{E_i}$
are singular.
With a little abuse of notation we say that the network
$\N=(G,\Gamma)$ is stratified straight if $H$ is stratified straight.

\smallskip

\textbf{The class of degenerate networks}

\smallskip

Finally we come to the characterization: a network is degenerate, of class $\mathcal{C}_{\mathrm{Deg}}$,
if it satisfies the angle condition~\ref{angcondshort} and it is stratified straight.
It is natural now to define the extension $\overline{\mathcal{E}}$  of 
the functional $\mathcal{E}$ by setting that a  
curve with zero length of a network in $\mathcal{C}_{\mathrm{Deg}}$ 
gives a null contribution to the energy and  by assigning value $+\infty$ to the energy of 
singular networks which are not in $\mathcal{C}_{\mathrm{Deg}}$.

In Proposition~\ref{regolariconvergonoadeg} we prove that a sequence
of regular networks with equibounded energy converges 
to a  network in $\mathcal{C}_{\mathrm{Deg}}$,
namely that the closure
of $\mathcal{C}_{\mathrm{Reg}}$
\emph{is contained} into $\mathcal{C}_{\mathrm{Deg}}$.

\smallskip

\textbf{Formula for the lower semicontinuous envelope of the elastic energy functional}

\smallskip

A question still remains:
is the class $\mathcal{C}_{\mathrm{Deg}}$ the smallest set of generalized network where we 
have compactness?
In Proposition~\ref{recovery} we show that
any degenerate network can be approximated in energy by a regular one. 
Hence our guess was correct:
the class $\mathcal{C}_{\mathrm{Reg}}$ is 
``dense in energy" in $\mathcal{C}_{\mathrm{Deg}}$.
Moreover the extension $\overline{\mathcal{E}}$
is the lower semicontinuous envelope of $\mathcal{E}$.

We stress the fact that we are able to
express the  lower semicontinuous envelope in integral form
and with the very simple formula~\eqref{estensione}.
This is possible only thanks to the precise characterization
of $\mathcal{C}_{\mathrm{Deg}}$.

Notice that the existence of minimizers 
of the relaxed functional $\overline{\mathcal{E}}$ follows trivially by a
direct method in the Calculus of Variations.

\smallskip

\textbf{Stratified straight versus straight subgraph}

\smallskip

At first sight considering stratified straight subgraphs could seem uselessly complicated.\\
One can ask whether the class of degenerate networks can be equivalently characterized as the set of networks $(G,\Gamma)$ satisfying the angle condition~\ref{angcondshort} such that there exists only \emph{one} map $\Sigma:H\to\mathbb{R}^d$ from the singular edges to $\mathbb{R}^d$ such that every curve of $\Sigma$ is a regular straight segment and $(H,\Sigma)$ satisfies the angle condition (in this case we say that the network $(H,\Sigma)$ is straight). 
This would mean that in Definition~\ref{shortstratstraightRd} the index $q=1$ is enough. 
In Example~\ref{limitstrstr} we show a stratified straight but not straight graphs. Nonetheless, there are some cases in which
every stratified straight subgraph is in fact straight; an easy example consists in graphs $G$ with the structure of a tree.\\
However, the concepts of stratified straight graphs and straight graphs are essentially different, 
even in non--trivial cases that have some interest from a variational viewpoint; more precisely in 
Section~\ref{strstrcoinceideconstr} we study the case of networks in $\R^2$ with graphs having junctions of order at most four
such that at these junctions the tangent vectors are orthogonal (Proposition~\ref{prop:QuadratiStraight}) and we find a characterization of the stratified straight graphs that turn out to be straight.

\smallskip

\textbf{Ambient space: restriction to the plane and extension to Riemannian surfaces}

\smallskip

Although we find our characterization
of the class $\mathcal{C}_{\mathrm{Deg}}$ by combination of algebraic and geometric conditions
quite satisfactory, if the ambient space is $\mathbb{R}^2$
we are able to give an equivalent characterization of the class $\mathcal{C}_{\mathrm{Deg}}$
(see Definition~\ref{degnetwRd})
by an algebraic--topological condition that
has the great advantage of being verifiable by an algorithm with finitely many steps.

It is also worth to mention that our result extends from the ambient space $\mathbb{R}^2$
to any $2$--dimensional closed surface embedded in $\mathbb{R}^3$.

\smallskip

\textbf{Fixed lengths}

\smallskip

In Section~\ref{fixedlength} we present 
a variant of the problem, that is analytically
very simple, but  that could be more relevant from the physical point of view:
the minimization of the $L^2$-norm of the curvature among
networks whose curves have fixed length.

\medskip

\textbf{Plan of the paper} 

\smallskip

The structure of the paper is the following:
after the definitions of networks and elastic energy contained in Section~\ref{definitionoftheproblem},
in Section~\ref{cptn} we introduce the class of degenerate networks (Definition~\ref{degnetwRd}).
The compactness of the class of degenerate networks is proved in
Proposition~\ref{regolariconvergonoadeg}.
In Section~\ref{relax}
 we finish the proof of the representation of the relaxed functional by constructing recovery sequences of degenerate networks (Proposition~\ref{recovery}).
Section~\ref{charact} is devoted to the equivalent algorithmic characterization of the class of degenerate networks
when Problem~\ref{problem} is set in $\mathbb{R}^2$.
Then in Section~\ref{strstrcoinceideconstr} we study a non-trivial case in which
there exist stratified straight subgraphs that are not straight and we characterize such difference (Proposition~\ref{prop:QuadratiStraight}).
Subsequently we comment on the extension of our result from the ambient space $\mathbb{R}^2$
to any $2$--dimensional closed surface embedded in $\mathbb{R}^3$. In Section \ref{sec:SideRemarks} we collect some final observations and remarks.
We conclude the paper with the Appendix~\ref{criticalpoints}
where we compute the Euler--Lagrange equations satisfied by critical points of the energy and we prove that critical points are real analytic, up to reparametrization.

\begin{ackn}
The  research of the second and of the third  author has  been  partially  supported  by
INdAM -- GNAMPA Project 2019 ``Problemi geometrici per strutture singolari"
(Geometric problems for singular structures).
This project has received funding from the European Research Council (ERC) under the European Union’s Horizon 2020 research and innovation programme under grant agreement No 757254 (SINGULARITY).
\end{ackn}

\section{Setting and definition of the problem}\label{definitionoftheproblem}

We begin by defining the mathematical objects of our interest.

\subsection{Elastic energy functional for networks}

Fix $N\in\mathbb{N}$, $d\in\mathbb{N}$ with $d\ge 2$ and let $i\in\{1,\ldots, N\}$, $I_i:=[0,1]\times\{i\}$,
$E:=\bigcup_{i=1}^N I_i$ and 
$V:=\bigcup_{i=1}^N \{0,1\}\times\{i\}$.

\begin{defn}($N$--graph)\label{defgraph}
Let $\sim$ be an equivalence relation that identifies points of $V$.
An $N$--graph $G$ is the topological quotient space of $E$ induced by $\sim$, that is
\begin{equation*}
G:=E\Big{/}\sim\,.
\end{equation*}
A subgraph $H\subseteq G$ is the quotient
\begin{equation*}
H:=\bigcup_{j=1}^M I_{i_j}\Big{/}\sim
\end{equation*}
for a given choice of indices $i_1,\ldots,i_M\in\{1,\ldots,N\}$,
where $\sim$ is the equivalence relation defining $G$.
\end{defn}

\begin{figure}[H]
\begin{center}
\begin{tikzpicture}[scale=0.9]

\draw[color=black, thick]
(-4.5,-0.3)--(-3.5,-0.3)
(-4.5,-1.3)--(-3.5,-1.3)
(-4.5,-2.3)--(-3.5,-2.3)
(-4.5,-3.3)--(-3.5,-3.3)
(-2.5,-0.3)--(-1.5,-0.3)
(-2.5,-1.3)--(-1.5,-1.3)
(-2.5,-2.3)--(-1.5,-2.3)
(-2.5,-3.3)--(-1.5,-3.3)

(3,-0.8)--(4,-0.8)
(3,-0.8)--(3,-1.8)
(4,-0.8)--(4,-1.8)
(2,-1.8)--(4,-1.8)
(3,-2.8)--(3,-1.8)
(2,-2.8)--(2,-1.8)
(2,-2.8)--(3,-2.8);

\draw[thick, shift={(2,-2.3)}, rotate=-180, scale=1, black]
(0,0)to[out= -45,in=135, looseness=1] (0.1,-0.1)
(0,0)to[out= -135,in=45, looseness=1] (-0.1,-0.1);

\draw[thick, shift={(3,-2.3)}, rotate=-180, scale=1, black]
(0,0)to[out= -45,in=135, looseness=1] (0.1,-0.1)
(0,0)to[out= -135,in=45, looseness=1] (-0.1,-0.1);

\draw[thick, shift={(3,-1.3)}, rotate=-180, scale=1, black]
(0,0)to[out= -45,in=135, looseness=1] (0.1,-0.1)
(0,0)to[out= -135,in=45, looseness=1] (-0.1,-0.1);

\draw[thick, shift={(4,-1.3)}, rotate=-180, scale=1, black]
(0,0)to[out= -45,in=135, looseness=1] (0.1,-0.1)
(0,0)to[out= -135,in=45, looseness=1] (-0.1,-0.1);

\draw[thick, shift={(3.5,-0.8)}, rotate=-90, scale=1, black]
(0,0)to[out= -45,in=135, looseness=1] (0.1,-0.1)
(0,0)to[out= -135,in=45, looseness=1] (-0.1,-0.1);

\draw[thick, shift={(3.5,-1.8)}, rotate=-90, scale=1, black]
(0,0)to[out= -45,in=135, looseness=1] (0.1,-0.1)
(0,0)to[out= -135,in=45, looseness=1] (-0.1,-0.1);

\draw[thick, shift={(2.5,-1.8)}, rotate=-90, scale=1, black]
(0,0)to[out= -45,in=135, looseness=1] (0.1,-0.1)
(0,0)to[out= -135,in=45, looseness=1] (-0.1,-0.1);

\draw[thick, shift={(2.5,-2.8)}, rotate=-90, scale=1, black]
(0,0)to[out= -45,in=135, looseness=1] (0.1,-0.1)
(0,0)to[out= -135,in=45, looseness=1] (-0.1,-0.1);

\fill[black]
(-4.5,-0.3)circle(1pt);
\fill[black]
(-3.5,-0.3)circle(1pt);
\fill[black]
(-4.5,-1.3)circle(1pt);
\fill[black]
(-3.5,-1.3)circle(1pt);
\fill[black]
(-4.5,-2.3)circle(1pt);
\fill[black]
(-3.5,-2.3)circle(1pt);
\fill[black]
(-4.5,-3.3)circle(1pt);
\fill[black]
(-3.5,-3.3)circle(1pt);
\fill[black]
(-2.5,-0.3)circle(1pt);
\fill[black]
(-1.5,-0.3)circle(1pt);
\fill[black]
(-2.5,-1.3)circle(1pt);
\fill[black]
(-1.5,-1.3)circle(1pt);
\fill[black]
(-2.5,-2.3)circle(1pt);
\fill[black]
(-1.5,-2.3)circle(1pt);
\fill[black]
(-2.5,-3.3)circle(1pt);
\fill[black]
(-1.5,-3.3)circle(1pt);

\path[font=\normalsize]
(-4,-0.4)node[above]{$I_1$}
(-4.5,-0.6)node[left]{$0$}
(-3.5,-0.6)node[right]{$1$}
(-4,-1.4)node[above]{$I_2$}
(-4.5,-1.6)node[left]{$0$}
(-3.5,-1.6)node[right]{$1$}
(-4,-2.4)node[above]{$I_3$}
(-4.5,-2.6)node[left]{$0$}
(-3.5,-2.6)node[right]{$1$}
(-4,-3.4)node[above]{$I_4$}
(-4.5,-3.6)node[left]{$0$}
(-3.5,-3.6)node[right]{$1$}
(-2,-0.4)node[above]{$I_5$}
(-2.5,-0.6)node[left]{$0$}
(-1.5,-0.6)node[right]{$1$}
(-2,-1.4)node[above]{$I_6$}
(-2.5,-1.6)node[left]{$0$}
(-1.5,-1.6)node[right]{$1$}
(-2,-2.4)node[above]{$I_7$}
(-2.5,-2.6)node[left]{$0$}
(-1.5,-2.6)node[right]{$1$}
(-2,-3.4)node[above]{$I_8$}
(-2.5,-3.6)node[left]{$0$}
(-1.5,-3.6)node[right]{$1$}

(2.1,-2.3)node[left]{$E_6$}
(3.1,-2.3)node[left]{$E_7$}
(2.9,-1.3)node[right]{$E_2$}
(3.9,-1.3)node[right]{$E_3$}

(3.5,-0.8)node[above]{$E_1$}
(2.5,-1.8)node[above]{$E_5$}
(3.5,-1.8)node[below]{$E_4$}
(2.5,-2.8)node[below]{$E_8$};

\end{tikzpicture}
\caption{On the left an example of a set $E$ and on the right 
the resulting graph $G$ with the identifications:
$(0,1)\sim (0,2)$, $(1,1)\sim (0,3)$, $(1,2)\sim (0,4)\sim(1,5)\sim (0,7)$, $(1,3)\sim(1,4)$,
$(0,5)\sim(0,6)$, $(1,6)\sim(0,8)$, $(1,7)\sim(1,8)$.}
\end{center}
\end{figure}
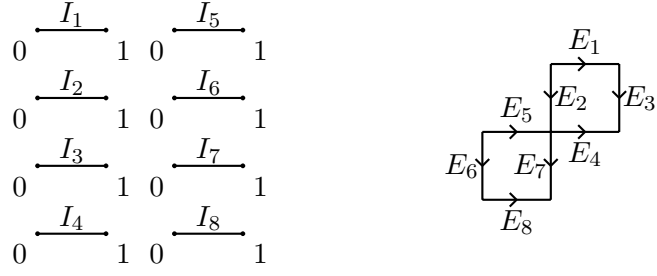

We notice that the natural projection $\pi: E\to G$ 
restricted to $E\setminus V$ is a homeomorphism with its image.

We call
\begin{equation*}
E_i:=I_i\Big{/}\sim\,,\qquad \text{and}\qquad V_G:=V\Big{/}\sim\,.
\end{equation*}

\begin{defn}[$N$--network]\label{network}
An $N$--network (or simply a network) is a pair $\mathcal{N}=(G,\Gamma)$
where
\begin{equation*}
\Gamma: G\to \mathbb{R}^d
\end{equation*}
is a continuous map and $G$ is an $N$--graph.
Moreover we ask each $\gamma^i:=\Gamma_{\vert E_i}$ to be
either a constant map (\emph{singular} curve)  or an immersion of class $H^2$
(\emph{regular} curve).

We will identify two networks $(G_1,\Gamma_1)$ and $(G_2,\Gamma_2)$ if $G_1=G_2=G$ and for any edge $E_i$ of $G$ the curve $\Gamma_1|_{E_i}$ coincides with $\Gamma_2|_{E_i}$ up to reparametrization.
\end{defn}

\begin{defn}[Convergence of networks]
We define that 
a sequence of networks $\mathcal{N}_n= (G_n,\Gamma_n)$ 
converges to a limit network $\mathcal{N}_\infty=(G_\infty, \Gamma_\infty)$ 
in a certain topology if $G_n=G_\infty$ for any $n$ and
each sequence of curves 
$\Gamma_n\vert_{E_i}$
converges to $\Gamma_\infty \vert_{E_i}$, 
up to reparametrization, 
in that topology.
\end{defn}

\begin{defn}[Singular part]\label{def:singular}
Given a network $\N=(G,\Gamma)$ we define its \emph{singular part} $\mathrm{Sing}(\N)$ 
as the subgraph $H\subset G$ whose edges are exactly those associated to singular curves of $\N$ 
(mapped by $\Gamma$ to a point) and we call 
regular part  $\mathrm{Reg}(\N):=\overline{G\setminus\mathrm{Sing}(\N)}$.
\end{defn}

If $E_i\subset\mathrm{Reg}(\mathcal{N})$ has a common vertex $p$ with
$E_j\in\mathrm{Sing}(\mathcal{N})$ we say that $p$ belongs to $\mathrm{Sing}(\mathcal{N})$,
i.e. $\mathrm{Sing}(\mathcal{N})$ is closed.

\begin{defn}[Regular and singular networks]
We say that a network $\N=(G,\Gamma)$ is \emph{regular}
if $\mathrm{Sing}(\N)=\emptyset$ and \emph{singular} otherwise.
\end{defn}

\begin{defn}
Given an network $\mathcal{N}$
we denote by $\ell (\gamma^i)$ the length of the curve $\gamma^i$.
The \emph{length} of the network $\mathcal{N}$ is nothing but
\begin{equation*}
L(\mathcal{N}):=\sum_{i=1}^N \ell (\gamma^i)\,.
\end{equation*}
\end{defn}

Consider a regular curve $\gamma^i$ of 
a  network $\mathcal{N}$.
Then we define its curvature vector as the $L^2$ function given by
\begin{equation*}\label{eq:curvature}
\vec{k}^i=\frac{\ddot\gamma^i}{\vert \dot\gamma^i\vert^2}
-\frac{\left\langle \ddot\gamma^i,\dot\gamma^i
\right\rangle \dot\gamma^i}{\vert \dot\gamma^i\vert^4}\,.
\end{equation*}
Observe that by the Sobolev embedding $\dot\gamma$ is H\"{o}lder continuous, 
and thus by the regularity its norm is bounded and bounded away from zero.

We are now in position to introduce the functional of our interest.

\begin{defn}\label{elasticenergy}
Let $\alpha,\beta >0$.
The \emph{elastic energy functional} $\mathcal{E}_{\alpha,\beta}$
for a regular network $\mathcal{N}$ is defined as 
\begin{equation}\label{eef}
\mathcal{E}_{\alpha,\beta}\left(\mathcal{N}\right):= 
\alpha \int_{\mathcal{N}} \vert \vec{k}\vert^{2}\,\mathrm{d}s
+\beta\, L(\mathcal{N})
=\sum_{i=1}^N\left(\alpha\int_{\mathcal{N}^{i}} \vert \vec{k}^i\vert^{2} \,\mathrm{d}s^i
+\beta \, \ell (\gamma^i) \right)\,,
\end{equation}
where 
$\vec{k}^i$ is the curvature and 
$s^i$ is the arclength 
parameter.
\end{defn}

\begin{rem}\label{reparwithconstspeed}
Reparametrizing the regular curves of the network 
with constant speed equal to the length 
we can write~\eqref{eef} as
\begin{equation}\label{riparametrizzato}
\mathcal{E}(\mathcal{N})=\sum_{i=1}^N\left(
\alpha \int_{0}^1 \frac{\vert \ddot\gamma^i(x)\vert^2 }{\left(\ell(\gamma^i)\right)^3}
\,\mathrm{d}x
+\beta \ell(\gamma^i)\right)\,.
\end{equation}
\end{rem}

By assigning 
to each singular curve of a network
a zero curvature $\vec{k}^i=\vec{0}$
we can naturally extend $\mathcal{E}$
to the class of all networks
maintaining exactly the same formula.
The elastic energy 
$\widetilde{\mathcal{E}}_{\alpha,\beta}$
of a regular or singular network $\mathcal{N}$ is thus
\begin{equation}\label{funzionalegenerale}
\widetilde{\mathcal{E}}_{\alpha,\beta}\left(\mathcal{N}\right):= 
\alpha \int_{\mathcal{N}} \vert \vec{k}\vert^{2}\,\mathrm{d}s
+\beta\, L(\mathcal{N})\,,
\end{equation}
where
$\vec{k}^i$ is the curvature of the regular curves of $\mathcal{N}$
or the assigned zero curvature on the singular curves of $\mathcal{N}$.

\subsection{Definition of the minimization problem}

\begin{defn}[Junction of order $m$]
Consider an $N$--graph $G$ and $p\in V_G$.
We say that $p$ is a junction of order $m$ (with $m\in \{1,\ldots, N\}$) if 
\begin{equation*}
\sharp \,
\pi^{-1} (p)=m\,,
\end{equation*}
where $\pi$ is the projection defined below Definition~\ref{defgraph}
and $\sharp$ denotes the cardinality of a set.
\end{defn}

\begin{defn}[Graph with assigned angles]\label{defTheta}
We say that an $N$--graph $G$ has assigned angles 
if to every junction $p\in V_G$
we assign \emph{directions}
\begin{equation*}
d^{z_1,i_1},\ldots,d^{z_m,i_m}\in\mathbb{S}^{d-1}
\end{equation*}
where $m$ is the order of the junction and where
\begin{equation*}
p=\pi\left(z_{1},i_1\right)=\ldots= 
\pi\left(z_{m},i_m\right)\,,
\end{equation*}
with $(z_{1},{i_1}),\ldots,  (z_{m},i_m)\in \{0,1\}\times\{1,\ldots,N\}$ all distinct.
We denote by $\mathcal{D}$ the set of all the assigned directions.
\end{defn}

\begin{defn}[Angle condition for a regular network]\label{angolifissati}
Given $G$ an $N$--graph with assigned angles, we say that
the regular network $\mathcal{N}=(G,\Gamma)$
fulfills the angle condition if for every 
$p\in V_G$ junction of order $m$, writing 
\begin{equation*}
p=\pi\left(z_{1},i_1\right)=\ldots= 
\pi\left(z_{m},i_m\right)\,,
\end{equation*}
with $(z_{1},{i_1}), \ldots,  (z_{m},i_m)\in \{0,1\}\times\{1,\ldots,N\}$  all distinct,
there exists a rotation $\mathrm{R}_p:\mathbb{R}^d\to\mathbb{R}^d$ depending on $p$ such that
\begin{equation*}
\tau^{z_1,i_1}=\mathrm{R}_p(d^{z_1,i_1})\,,\ldots,\tau^{z_m,i_m}=\mathrm{R}_p(d^{z_m,i_m})\,,
\end{equation*}
where $\tau^{z,i}$ are the outer tangents introduced in
Definition~\ref{outertang}.
\end{defn}

\begin{defn}[Class $\mathcal{C}_{\rm{Reg}}$]
Fix an $N$--graph with assigned angles. We 
say that an network $\mathcal{N}=(G,\Gamma)$
belongs to $\mathcal{C}_{\rm{Reg}}(\mathcal{D})$ if it is regular and fulfills the angle condition
with the directions assigned by $\mathcal{D}$
in the sense of Definition~\ref{angolifissati}.
\end{defn}

\begin{prob}\label{problem}
Given an $N$--graph $G$ with assigned angles by $\mathcal{D}$
we want to study 
\begin{equation}\label{inf}
\inf 
\left\lbrace \mathcal{E}_{\alpha,\beta}(\mathcal{N})\;\vert\: \mathcal{N}
=(G,\Gamma)\in\mathcal{C}_{\rm{Reg}(\mathcal{D})} 
\right\rbrace \,. 
\end{equation}
\end{prob}

From now on, for sake of notation, we simply write $\mathcal{C}_{\rm{Reg}}$ instead of
$\mathcal{C}_{\rm{Reg}}(\mathcal{D})$.

\begin{rem}
It is not restrictive to ask $G$ to be connected, otherwise
one minimizes the energy of each connected component of $G$.
\end{rem}

\begin{rem}
Let us call $\mathcal{E}:=\mathcal{E}_{1,1}$.
The rescaling properties of the functional $\mathcal{E}_{\alpha,\beta}$ imply
\begin{equation}\label{eq:scaling}
\mathcal{E}_{\alpha,\beta}(\mathcal{N})=
\sqrt{\alpha\beta}\,\mathcal{E}\left(\sqrt{\frac{\beta}{\alpha}}\,\mathcal{N}\right)\,,
\end{equation}
so if $\mathcal{N}_{\min}$ is a minimizer for $\mathcal{E}$, then
the rescaled network  $\frac{\beta}{\alpha}\,\mathcal{N}_{\min}$
is a minimizer for $\mathcal{E}_{\alpha,\beta}$ and vice versa.
Hence it is not restrictive to fix $\alpha=\beta=1$.
\end{rem}

\begin{rem}
Given a graph  $G$ with assigned angles, there 
always exists a map $\Gamma:G\to\mathbb{R}^d$ such that 
$(G,\Gamma)$ is a regular network with finite energy.
It is sufficient to send each vertex $p\in V_G$ to a point $x^j$ in $\R^d$
and to connect the points $x^i$ 
with curves of finite length and with bounded curvature
whose outward tangent vectors are chosen accordingly 
with the fixed directions $d^{z_j,i_j}$.
The class $\mathcal{C}_{\rm{Reg}}$ is hence not empty and~\eqref{inf} is finite.
\end{rem}

\section{Compactness}\label{cptn}

With a little abuse of notation
by considering a sequence of networks $\{\mathcal{N}_n\}_{n\in\mathbb{N}}$
we mean that we consider a sequence of pairs $(G,\{\Gamma_n\}_{n\in\mathbb{N}})$
where the $N$--graph $G$ with assigned angles is fixed.

\begin{lem}\label{pocaoscillazione}
Let $\{\mathcal{N}_n\}_{n\in\mathbb{N}}$
be a sequence of networks in $\mathcal{C}_{\mathrm{Reg}}$
such that
\begin{equation*}
\limsup_n \mathcal{E}(\mathcal{N}_n)\leq C<+\infty\,.
\end{equation*}
Suppose that for a certain index $i\in\{1,\ldots, N\}$
\begin{equation*}
\lim_{n\to\infty}\ell(\gamma^i_n)=0\,,
\end{equation*}
then
\begin{equation*}
\lim_{n\to\infty}\,\sup_{x,y\in [0,1]} \left\lvert 
\frac{\dot\gamma^i_n(x)}{\vert\dot\gamma^i_n(x)\vert}
-\frac{\dot\gamma^i_n(y)}{\vert\dot\gamma^i_n(y)\vert}
\right\rvert=0\,.
\end{equation*}
\end{lem}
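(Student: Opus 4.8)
The key idea is that, for a curve reparametrized with constant speed equal to its length, the quantity $\int_0^1 |\ddot\gamma^i_n|^2\,\mathrm{d}x$ controls the oscillation of the tangent direction, but only after being rescaled by the length. So I first use Remark~\ref{reparwithconstspeed}: assuming each $\gamma^i_n$ is parametrized with constant speed $\ell(\gamma^i_n)=:\ell_n$, the energy bound gives in particular
\begin{equation*}
\alpha\int_0^1 \frac{|\ddot\gamma^i_n(x)|^2}{\ell_n^3}\,\mathrm{d}x \le \mathcal{E}(\mathcal{N}_n)\le C+o(1)\,,
\end{equation*}
hence $\int_0^1 |\ddot\gamma^i_n(x)|^2\,\mathrm{d}x \le \tilde C\,\ell_n^3$ for $n$ large. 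This is the crucial gain: the bound on $\|\ddot\gamma^i_n\|_{L^2}$ is not merely uniform, it vanishes at rate $\ell_n^{3/2}$.

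Next I estimate the oscillation of the unit tangent $T^i_n:=\dot\gamma^i_n/|\dot\gamma^i_n|$. Since the parametrization has constant speed $\ell_n$, we have $|\dot\gamma^i_n|\equiv \ell_n$, so $T^i_n(x)=\dot\gamma^i_n(x)/\ell_n$ and $\dot T^i_n = \ddot\gamma^i_n/\ell_n$. By the fundamental theorem of calculus and Cauchy--Schwarz, for any $x,y\in[0,1]$,
\begin{equation*}
|T^i_n(x)-T^i_n(y)| = \left| \int_y^x \frac{\ddot\gamma^i_n(t)}{\ell_n}\,\mathrm{d}t \right| \le \frac{1}{\ell_n}\left(\int_0^1 |\ddot\gamma^i_n(t)|^2\,\mathrm{d}t\right)^{1/2} \le \frac{1}{\ell_n}\,\bigl(\tilde C\,\ell_n^3\bigr)^{1/2} = \sqrt{\tilde C}\,\ell_n^{1/2}\,.
\end{equation*}
Taking the supremum over $x,y$ and letting $n\to\infty$, the right-hand side goes to $0$ since $\ell_n\to 0$, which is exactly the claim. (One should also note the statement is invariant under reparametrization, since the set of values $\{\dot\gamma^i_n(x)/|\dot\gamma^i_n(x)| : x\in[0,1]\}$ — i.e.\ the image of the unit tangent — does not depend on the choice of regular parametrization; so it suffices to prove it for the constant-speed one.)

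The only genuinely delicate point is making sure the exponents work out, i.e.\ that the $\ell_n^{-1}$ coming from differentiating the normalized tangent is beaten by the $\ell_n^{3/2}$ gain from the energy; the surplus power $\ell_n^{1/2}$ is what drives the limit, and this is precisely why the scaling-invariant formulation~\eqref{riparametrizzato} is the right tool. No compactness or subsequence argument is needed; the estimate is fully quantitative. A minor bookkeeping remark: one uses $\limsup_n\mathcal{E}(\mathcal{N}_n)\le C$ only to get $\mathcal{E}(\mathcal{N}_n)\le C+1$ for $n$ large, which suffices.
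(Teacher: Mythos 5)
Your proof is correct and follows essentially the same route as the paper: reduce to the constant-speed parametrization, use the fundamental theorem of calculus plus Cauchy--Schwarz, and exploit that the energy bound makes the tangent oscillation of order $\ell(\gamma^i_n)^{1/2}$ (the paper phrases the intermediate quantity as $\int_{\gamma^i_n}|\vec{k}^i_n|\,ds\le \mathcal{E}(\mathcal{N}_n)^{1/2}\ell(\gamma^i_n)^{1/2}$, which is exactly your estimate). No gaps.
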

\begin{proof}
Since $\{\mathcal{N}_n\}_{n\in\mathbb{N}}$ is a sequence of regular networks,
we can suppose (up to reparametrisation)
that for every $n\in\mathbb{N}$, for every $i\in\{1,\ldots,N\}$ the immersion $\gamma^i_n:[0,1]\to\mathbb{R}^2$
is a regular parametrization with constant speed equal to its length.
Given $x,y\in[0,1]$, we get
\begin{align*}
\left\lvert 
\frac{\dot\gamma^i_n(x)}{\vert\dot\gamma^i_n(x)\vert}
-\frac{\dot\gamma^i_n(y)}{\vert\dot\gamma^i_n(y)\vert}
\right\rvert
&  
=\frac{1}{\ell(\gamma^i_n)} \left|  \dot\gamma^i_{n}(x)-\dot\gamma^i_{n}(y) \right|
=\frac{1}{\ell(\gamma^i_n)} \left| \int_x^y \ddot\gamma^i_{n}(t)\frac{\ell(\gamma^i_n)^2}{\ell(\gamma^i_n)^2}\,\mathrm{d}t\, \right| \\
& \leq  \left| \int_{\gamma^i_n} |\vec{k}^i_n| \, ds \right| 
\leq \mathcal{E}(\mathcal{N}_n)^{1/2} \ell(\gamma^i_n)^{1/2} \\
&\leq C \ell(\gamma^i_n)^{1/2}\,.
\end{align*}
We then obtain the desired result passing to the limit.
\end{proof}


We state here again the angle condition~\ref{angcondshort} with the 
use of the notation introduced in Section~\ref{definitionoftheproblem}.

\begin{defn}[Angle condition for a singular network]\label{angolifissatiRd}
Consider  a (possibly singular) network $\mathcal{N}=(G, \Gamma)$.
We recall that for any $p\in V_G$ and $\gamma^{i}$ regular curve such that $\pi(z,i)=p$ for some $z\in\{0,1\}$ the usual (real) outward tangent vector is
$\tau^{z,i}=(-1)^{z}\frac{\dot \gamma^{i}(z)}{|\dot \gamma^{i}(z)|}$.

We say that $\mathcal{N}$ satisfies the angle condition (given by a set of directions $\mathcal{D}$) if for every singular curve $\gamma^i$ there exist 
unit vectors $\tau^{0,i}$ and $\tau^{1,i}$ (called virtual tangent vectors) such that $\tau^{0,i}=-\tau^{1,i}$ 
and such that for every $p\in V_G$ there exists a rotation $\mathrm{R}_p$ in $\mathbb{R}^d$ such that
\begin{equation}\label{eq4}
\tau^{z_1,i_1}=\mathrm{R}_p(d^{z_1,i_1}),\,...\,,\tau^{z_m,i_m}=\mathrm{R}_p(d^{z_m,i_m})\,,
\end{equation}
where $\pi^{-1}(p)=\{(z_1,i_1),\,...\,,(z_m,i_m)\}$ and $\tau^{z_j,i_j}$ 
are either the real outward tangent vectors  or the virtual ones.
\end{defn}

\begin{rem}[{Gauss--Bonnet-type theorems for curves in the plane}]\label{rem:GaussBonnet}
Let $\sigma:[a,b]\to\R^2$ be an embedded piecewise $H^2$, regular closed curve, that is, there exist points $a=x_0<x_1< \ldots <x_{n-1}<x_n=b$ such that $\sigma|_{[x_j,x_{j+1}]}$ is a regular curve of class $H^2$, $\sigma$ is continuous on $[a,b]$, and $\sigma(a)=\sigma(b)$. Denote by $\dot\sigma(x^{\pm}_j):=\lim_{t\to x^{\pm}_j} \dot\sigma(t)$, which exists as $\dot\sigma(t) $ is $\tfrac12$-H\"{o}lder continuous on $[x_j,x_{j+1}]$ for $j=0,\ldots, n-1$. Finally denote by $\Theta[v,w] \in (-\pi,\pi] $ the angle between two vectors $v,w \in \R^2$, taken with positive sign if and only if the ordered couple $(v,w)$ is a positive basis of $\R^2$.
Then the classical Gauss--Bonnet theorem (which follows for instance from Hopf's Umlaufsatz \cite[Theorem 2.4.7]{AbToCurveSuperfici}) reads
\[
\int_a^b \langle \vec{k}_\sigma , \nu_\sigma \rangle \,  = 2\pi - \sum_{j=0}^{n-1} \Theta[\dot\sigma (x^-_j), \dot \sigma(x^+_j) ],
\]
where $\vec{k}_\sigma$ is the curvature vector of $\sigma$, $\nu_\sigma$ is its normal vector, $\dot \sigma(x_0^-) \equiv \dot \sigma(a^-):= \dot \sigma(b^-)$, and we are assuming that $\sigma$ positively parametrizes the boundary of the bounded connected component it encloses.

Below we will need a refined version of this theorem. Suppose now that the curve $\sigma$ has the same regularity as above but it is possibly immersed, i.e., it may have self--intersections, then the Gauss--Bonnet theorem (see \cite[Theorem A.1]{danovplu}) reads
\begin{equation}\label{eq:GaussBonnetImmerso}
    \int_a^b |\vec{k}_\sigma|\, ds \ge 2\pi -  \sum_{j=0}^{n-1} \left|\Theta[\dot\sigma (x^-_j), \dot \sigma(x^+_j) ] \right|.
\end{equation}
\end{rem}

We discuss here more in detail the example, anticipated in the Introduction, of a singular network that satisfies the angle 
condition~\ref{angolifissatiRd} but that cannot be the limit of a sequence of regular networks
with uniformly bounded energy.

\begin{Example}\label{condnecessarianonsuff}
Let the dimension of the ambient space be $d=2$.
Consider the $6$--graph of Figure~\ref{cicloproblematico}
where in particular 
the edges $E_1,E_2,E_3,E_4$ form a cycle $C$
\footnote{Namely the equivalence relation that defined $G$
is given by $(1,i)\sim (0,i+1)$ with the index $i$ considered modulo $4$.}
and, as sketched in the picture, assume that the assigned directions $\{d^{z,i}\}$ at the (triple) junctions form (equal) angles equal to $\tfrac{2}{3}\pi$.
%
Consider a sequence of continuous maps $\{\Gamma_n\}_{n\in\mathbb{N}}$
such that $\Gamma_1$ is, for example, as depicted in Figure~\ref{cicloproblematico} and such that $L(C_n)\to0$, where $C_n:={\Gamma_n}\vert_{C}$.
This property is satisfied by any sequence of networks approximating the singular network $\N_\infty=(G,\Gamma_\infty)$ depicted on the right in Figure~\ref{cicloproblematico}, which collapses the cycle $C$. Observe that there exists a choice of virtual tangents for which $\N_{\infty}$ satisfies the angle condition in the sense of Definition~\ref{angolifissatiRd}.
But combining \eqref{eq:GaussBonnetImmerso} and H\"{o}lder inequality, we get
\begin{equation*}
\mathcal{E}(\Gamma_n)\geq
\mathcal{E}(C_n)\geq \int_{C_n}\vert \vec{k}\vert^2 \,\mathrm{d}s
\geq \frac{\left(\int_{C_n}\vert \vec{k}\vert \,\mathrm{d}s\right)^2}{L(C_n)}
\geq \frac{\left( 2\pi - 4\frac\pi3 \right)^2}{ L(C_n)}\,,
\end{equation*}
hence, as $n\to\infty$, the energy diverges as the length of the red loop $C_n$ goes to zero.
\end{Example}

The angle condition for a singular network is thus not sufficient to 
characterize the set of limit networks with bounded energy.

\begin{defn}[Straight graph]
An $N$--graph $G$ with assigned angles is \emph{straight} 
if there exists a regular network $\N=(G,\Gamma)$ 
whose curves are 
straight segments that fulfills the angle condition in the sense of
Definition~\ref{angolifissati}.
\end{defn}

Suppose that $G$ is an $N$--graph with assigned angles.
Then every subgraph $H$ of $G$ inherits the assigned angles from $G$
in the sense that to every 
vertex  $p=\pi(z,i)$ of an edge $E_i$ in $H$
we assign $d^{z,i}\in\mathbb{S}^{d-1}$
coinciding with the assignment of $G$.\\

\noindent For the convenience of the reader, let us recall here the key notion of stratified straight subgraph, which was already presented in Definition \ref{shortstratstraightRd}.

\begin{defn}[Stratified straight subgraph]\label{def:stratstraightRd}
Given a graph $G$ with assigned angles
we say that a subgraph $H\subseteq G$ is stratified--straight if there exists a finite sequence of subgraphs,
called \emph{strata},
\begin{equation*}
\emptyset=H_q\subset H_{q-1}\subset\ldots\subset H_1\subset H_0=H
\end{equation*}
and maps $\Sigma_j:H_j\to \R^d$  
such that for $j=0,\ldots,q-1$ we have that
\begin{itemize}
\item the (sub)network  $(H_j,\Sigma_j)$ satisfies the angle condition
in the sense of Definition~\ref{angolifissatiRd} 
with (real or virtual) tangent vectors coinciding with the ones associated to $(H_0,\Sigma_0)$ 
and whose regular curves are straight segments;
\item $H_{j+1}=\mathrm{Sing}((H_j,\Sigma_j))$.
\end{itemize}
We call \emph{step} of $G$ the least $q$ for which the above holds.
\end{defn}

Every straight graph with assigned angles is trivially stratified-straight, 
but the converse does not hold in general as shown by the following example.

\begin{Example}\label{strstr}
We consider the graph of Figure~\ref{fig:twostrata}
characterized by the following identifications:
\begin{align*}
&(0,1)\sim (0,2), \,\,
(1,2)\sim (1,3)\sim (0,5), \,\,
(1,1)\sim (0,3)\sim (0,4), \,\,
(1,4)\sim (1,5)\,,\\
&\measuredangle d^{0,2},d^{0,1}=\measuredangle d^{0,3},d^{0,4}=0\,,\, \,\,\quad
\measuredangle d^{1,1},d^{0,3}=\measuredangle d^{1,3},d^{1,2}= \measuredangle d^{1,4},d^{1,5}=\tfrac{\pi}{2}\,,
\end{align*}
where $\measuredangle v,w \in[0,2\pi)$ here identifies the least positive angle between two vectors $v,w$ in the plane (see Figure \ref{fig:twostrata} on the left).

It is clearly not possible to construct a triangle with strictly positive length of its three (straight) edges with angles $0,\tfrac{\pi}{2},\tfrac{\pi}{2}$.
The minimal step of $G$ seen as a stratified straight graph is then $2$,
with $H_0=G$, $H_1=E_3\cup E_4\cup E_5$ and $H_2=E_5$.

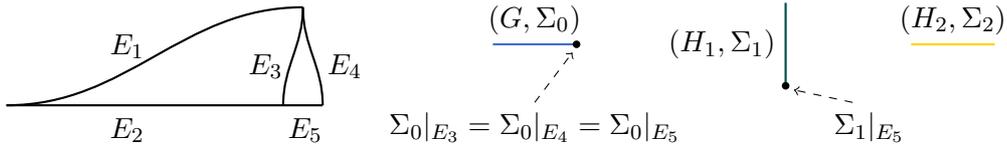
\begin{figure}[H]
\centering
\begin{tikzpicture}[scale=1.3]
\begin{scope}[rotate=-90, thick]
\draw (1,0.2) -- (1,-3);
\draw (1,-3) to [out=90,in=-90] (0,0);
\draw (0,0) to [out=0,in=180] (1,0.2);
\draw (0,0) to [out=0,in=180] (1,-0.2);
\end{scope}
\path[font=\normalsize]
(-1.5,-0.4)node[left]{$E_1$}
(-1.5,-1.25)node[left]{$E_2$}
(-0.1,-0.6)node[left]{$E_3$}
(0.7,-0.6)node[left]{$E_4$}
(0.3,-1.25)node[left]{$E_5$};
\end{tikzpicture}
\begin{tikzpicture}[scale=1.1]
\draw[hanblue, thick]
(-1,0.5)--(0,0.5);
\draw[->,dashed] (-0.5,-0.2)--(-0.05,0.4);
\draw[deepjunglegreen, thick]
(2.5,0)--(2.5,1);
\draw[->,dashed] (3.3,-0.2)--(2.6,-0.05);
\draw[yellow(ncs), thick]
(4,0.5)--(5,0.5);
\fill (0,0.5)
circle(1.3pt); 
\fill (2.5,0)
circle(1.3pt); 
\path[font==\normalsize]
(-0.5,0.5)node[above]{$(G,\Sigma_0)$}
(-0.5,-0.2)node[below]{$\Sigma_0\vert_{E_3}=\Sigma_0\vert_{ E_4}=\Sigma_0\vert_{ E_5}$}
(2.5,0.5)node[left]{$(H_1,\Sigma_1)$}
(3.5,-0.2)node[below]{$\Sigma_1\vert_{ E_5}$}
(4.5,0.5)node[above]{$(H_2,\Sigma_2)$};
\end{tikzpicture}
\caption{Example of stratified straight graph of step two.}\label{fig:twostrata}
\end{figure}
\end{Example}

Notice that the notion of straight and stratified straight graphs coincide when 
the underlying graph $G$ has the structure of a tree with no cycles.
In Section~\ref{strstrcoinceideconstr} we introduce a class 
of graphs possibly with cycles for which every stratified straight
subgraph is also straight.

\begin{defn}[Degenerate $N$--network]\label{degnetwRd}
Let $G$ be an $N$--graph with assigned angles. 
A network $\mathcal{N}=(G,\Gamma)$
is degenerate if
\begin{itemize}
\item it satisfies the angle condition in the sense of Definition~\ref{angolifissatiRd}, with (real or virtual) tangents $\tau^{z,i}$;
\item its singular part $\mathrm{Sing}(\N)$ is a stratified-straight subgraph with (real or virtual) tangents coinciding with the $\tau^{z,i}$ above.
\end{itemize}
\end{defn}

We denote by $\mathcal{C}_{\rm{Deg}}$ 
the  class of  degenerate networks.
We remark that 
by definition $\mathcal{C}_{\rm{Reg}}\subset\mathcal{C}_{\rm{Deg}}$.
We remind that to compute the elastic energy of a network
in $\mathcal{C}_{\rm{Deg}}$  we make use of the
extension of the functional defined in~\eqref{funzionalegenerale}.

\begin{prop}[Compactness]\label{regolariconvergonoadeg}
Let $\{\mathcal{N}_n\}_{n\in\mathbb{N}}$ be a sequence of 
networks in  $\mathcal{C}_{\rm{Deg}}$ such that
$$
\limsup_n \widetilde{\mathcal{E}}(\mathcal{N}_n)\leq C<+\infty\,.
$$
Then $\mathcal{N}_n$ converges (up to subsequence and translation)
to $\mathcal{N}_\infty\in \mathcal{C}_{\rm{Deg}}$
weakly in $H^2$ and strongly in $C^{1, \alpha}$ for every $\alpha \in (0,\nicefrac{1}{2})$.
In particular if $\{\mathcal{N}_n\}_{n\in\mathbb{N}}$ is a sequence of networks in 
$\mathcal{C}_{\rm{Reg}}$, then 
$\mathcal{N}_n\overset{H^2}{\rightharpoonup}\mathcal{N}_\infty\in \mathcal{C}_{\rm{Deg}}$.
\end{prop}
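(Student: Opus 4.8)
### Proof proposal for Proposition~\ref{regolariconvergonoadeg}

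\textbf{Overall strategy.} The plan is to split the edges of $G$ into those whose length stays bounded away from zero along the sequence and those whose length collapses, and to handle the two groups by different mechanisms: standard weak-$H^2$ compactness for curves with nondegenerate length, and a careful geometric argument (based on Lemma~\ref{pocaoscillazione}, the angle condition, and the stratified-straight structure) for the collapsing ones. The energy bound $\widetilde{\mathcal{E}}(\mathcal{N}_n)\le C$ gives a uniform bound on $L(\mathcal{N}_n)$ and on $\int|\vec k|^2$, so after a translation placing (say) one fixed vertex at the origin, all the networks lie in a fixed ball of $\R^d$.

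\textbf{Step 1: reparametrize and extract limits edge by edge.} Reparametrize every regular curve $\gamma^i_n$ with constant speed equal to its length, so that $\|\dot\gamma^i_n\|_\infty=\ell(\gamma^i_n)$ and, by Remark~\ref{reparwithconstspeed}, $\alpha\ell(\gamma^i_n)^{-3}\int_0^1|\ddot\gamma^i_n|^2\,\mathrm dx+\beta\ell(\gamma^i_n)\le C$. Passing to a subsequence, each $\ell(\gamma^i_n)\to \lambda_i\in[0,\infty)$. Call an edge \emph{persistent} if $\lambda_i>0$ and \emph{collapsing} if $\lambda_i=0$. For a persistent edge, $\gamma^i_n$ is bounded in $H^2(0,1)$ (first derivatives bounded by $\ell(\gamma^i_n)\le$ const, second derivatives bounded in $L^2$ since $\int|\ddot\gamma^i_n|^2\le C\ell(\gamma^i_n)^3\le$ const), so up to a subsequence $\gamma^i_n\rightharpoonup\gamma^i_\infty$ weakly in $H^2$ and strongly in $C^{1,\alpha}$ for $\alpha<\tfrac12$ by compact Sobolev embedding; the limit is still an immersion because $|\dot\gamma^i_\infty|\equiv\lambda_i>0$ by $C^1$ convergence, and it is $H^2$. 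For a collapsing edge, by the uniform bound in $\R^d$ the image of $\gamma^i_n$ shrinks to a point, so $\gamma^i_n\to\gamma^i_\infty\equiv p_i\in\R^d$ uniformly; choosing the subsequence so that also the unit tangents $\dot\gamma^i_n/|\dot\gamma^i_n|$ at the endpoints converge, Lemma~\ref{pocaoscillazione} shows the \emph{whole} rescaled curve $\gamma^i_n/\ell(\gamma^i_n)$ converges in $C^1$ to a straight segment in a fixed direction $\tau^i$, and $\gamma^i_n\to p_i$ strongly in $C^{1,\alpha}$ as well (the $C^{1,\alpha}$ bound is uniform because the tangent is nearly constant). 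Since $G$ is connected and the identifications of $V_G$ are preserved along the sequence, the limit map $\Gamma_\infty$ is well-defined and continuous, so $\mathcal{N}_\infty=(G,\Gamma_\infty)$ is a network, and $\mathcal{N}_n\to\mathcal{N}_\infty$ weakly in $H^2$, strongly in $C^{1,\alpha}$.

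\textbf{Step 2: $\mathcal{N}_\infty$ satisfies the angle condition.} This is where Lemma~\ref{pocaoscillazione} does the real work. At a junction $p\in V_G$, the outer tangents of persistent edges converge (by $C^1$ convergence of those curves) to the outer tangents of their limits; the ``virtual tangents'' of collapsing edges are defined as the limit directions $\tau^i$ (resp.\ $-\tau^i$) provided by the lemma, and they satisfy $\tau^{0,i}=-\tau^{1,i}$ by construction. For each $n$ the angle condition holds with some rotation $\mathrm R_p^{(n)}\in SO(d)$; extracting a further subsequence $\mathrm R_p^{(n)}\to\mathrm R_p$, and passing to the limit in $\tau^{z_j,i_j}_n=\mathrm R_p^{(n)}(d^{z_j,i_j})$, gives $\tau^{z_j,i_j}_\infty=\mathrm R_p(d^{z_j,i_j})$ for all branches at $p$. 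Hence $\mathcal{N}_\infty$ satisfies the angle condition of Definition~\ref{angolifissatiRd} with these tangents.

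\textbf{Step 3: $\mathrm{Sing}(\mathcal{N}_\infty)$ is stratified straight.} Let $H:=\mathrm{Sing}(\mathcal{N}_\infty)$, the union of collapsing edges. The idea is to build the strata using the iterated blow-up of the collapsing curves. Define $\Sigma_0:H\to\R^d$ by rescaling: on edge $E_i\subset H$, take the $C^1$-limit of $\gamma^i_n/\ell(\gamma^i_n)$ (after a common rescaling by, e.g., the maximal collapsing length $\max_{i\in H}\ell(\gamma^i_n)$, so that relative scales are retained); some of these rescaled curves converge to genuine straight segments of positive length while others collapse \emph{again} — these form $H_1:=\mathrm{Sing}((H_0,\Sigma_0))$. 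The subnetwork $(H_0,\Sigma_0)$ satisfies the angle condition with tangents $\tau^{z,i}$ as in Step~2 (the blow-up preserves tangents, by Lemma~\ref{pocaoscillazione}), and its regular curves are straight because a constant-speed curve whose unit tangent has vanishing oscillation and whose second derivative scales down is affine in the limit. Iterating this blow-up on $H_1$ — always rescaling by the largest surviving length — produces $H_0\supset H_1\supset H_2\supset\cdots$, and the process terminates because $\sharp\{$edges$\}$ strictly decreases; the final stratum is $\emptyset$. This realizes $H$ as stratified straight with tangents matching those of $\mathcal{N}_\infty$, so $\mathcal{N}_\infty\in\mathcal{C}_{\mathrm{Deg}}$. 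The final sentence of the statement is then immediate: a sequence in $\mathcal{C}_{\mathrm{Reg}}\subset\mathcal{C}_{\mathrm{Deg}}$ with bounded $\mathcal{E}=\widetilde{\mathcal{E}}$ satisfies the hypotheses.

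\textbf{Main obstacle.} The delicate point is Step~3: one must choose the rescalings so that \emph{all} collapsing curves are resolved coherently — i.e.\ show that after dividing by the largest collapsing length, at least one curve blows up to a nondegenerate segment while the junction identifications and the angle condition pass to the limit intact, and that the angle condition on $(H_j,\Sigma_j)$ inherits exactly the tangents $\tau^{z,i}$ of $\mathcal{N}_\infty$ rather than some unrelated vectors. Controlling the relative rates of vanishing of different curves meeting at a common junction, and verifying that the limiting segments genuinely close up into the combinatorial structure of $H_j$, is the technical heart of the argument; everything else is standard weak compactness plus Lemma~\ref{pocaoscillazione}.
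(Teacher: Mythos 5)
Your proposal follows essentially the same route as the paper: constant-speed reparametrization with the bounds $\|\dot\gamma^i_n\|_\infty\le \ell(\gamma^i_n)$ and $\|\ddot\gamma^i_n\|_{L^2}^2\le L(\mathcal{N}_n)^3\widetilde{\mathcal{E}}(\mathcal{N}_n)$, weak-$H^2$/strong-$C^{1,\alpha}$ extraction edge by edge, the angle condition in the limit via Lemma~\ref{pocaoscillazione} together with compactness of the tangent families and rotations, and an iterated blow-up of the collapsed part to produce the strata. The ``main obstacle'' you flag at the end is dealt with in the paper by a simpler device than controlling relative collapse rates: at each stage one rescales the \emph{whole} current stratum by the total length $L_n^i=L(\Gamma_n(H_i))$, so the rescaled networks have length exactly $1$ and, by the scaling identity, curvature energy $L_n^i\int|\vec k|^2\to 0$; one then just repeats the first compactness step verbatim (unit tangents and the angle condition are scale-invariant, so the limit tangents automatically agree with $\Tau_\infty$), and the limit has at least one regular curve, which forces the strata to shrink and the induction to terminate.
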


\begin{proof}
Up to subsequence and up to relabeling the edges $E_i$ we can suppose that 
for a certain $k\in\{1,\ldots N\}$ for every $n\in\mathbb{N}$
the curves $\mathcal{N}_n^1,\ldots,\mathcal{N}_n^k$ are regular 
and the curves $\mathcal{N}_n^{k+1},\ldots,\mathcal{N}_n^N$
are singular.
Without loss of generality we suppose that for every $n\in\mathbb{N}$
and for every $i\in\{1,\ldots,k\}$
the immersions $\gamma^i_n:=\Gamma_{n}\vert_{{E_i}}:[0,1]\to\mathbb{R}^d$
are regular parametrizations with constant speed equal to the length and
we ask that $\gamma_n^1(0)$ is mapped into the origin of $\mathbb{R}^d$.
For every $i\in\{1,\ldots,k\}$ and for every $n\in\mathbb{N}$ we have that
\begin{align}
\| \gamma^i_n\|_\infty&=
\sup_{x\in [0,1]}\vert {\gamma}_n^i(x)\vert
\leq L(\mathcal{N}_n)\leq \widetilde{\mathcal{E}}(\mathcal{N}_n)\leq C<\infty\,,\label{normafunz}\\
\Vert \dot\gamma^i_n\Vert_\infty&=
\sup_{x\in [0,1]}\vert \dot{\gamma}_n^i(x)\vert
=\ell (\mathcal{N}_n^i)\leq L(\mathcal{N}_n)\leq \widetilde{\mathcal{E}}(\mathcal{N}_n)\leq C<\infty\,,
\label{normader} \\
\Vert \ddot\gamma^i_n\Vert_2^2&=\ell (\mathcal{N}_n^i)^3
\int_0^1 \frac{\vert\ddot\gamma^i_n(x)\vert^2}{\ell (\mathcal{N}_n^i)^3}\,\mathrm{d}x
\leq L(\mathcal{N}_n)^3\widetilde{\mathcal{E}}(\mathcal{N}_n)\leq C^4<\infty\,.\label{normadersec}
\end{align}

Since for every $i\in\{k+1,\ldots,N\}$ 
the maps are constant on the bounded interval of parametri--zation $[0,1]$,
a bound on $\Vert\cdot\Vert_{H^2}$ is trivially obtained.
Then for every $i\in\{1,\ldots,N\}$
up to a subsequence (not relabeled)
$\gamma^i_{n}\rightharpoonup \gamma^i_\infty$ weakly in $H^2(0,1)$ and
thanks to classical compact embedding theorems $\gamma^i_{n}\rightarrow \gamma^i_\infty$
strongly in $C^{1,\alpha}([0,1])$ for every $\alpha\in(0,\tfrac12)$.

\medskip

Since all the networks of the sequence $\{\mathcal{N}_n\}_{n\in\mathbb{N}}$
satisfy the angle condition of Definition~\ref{angolifissatiRd}, 
for each $n$ there exists a family of unit vectors $\Tau_n=(\tau_n^{z,i})_{(z,i)\in\pi^{-1}(V_G)}$
that are either the real or the virtual tangents.
By compactness of $\mathbb{S}^{d-1}$
up to subsequence (not relabeled) $\Tau_n$ converge to a limit $\Tau_\infty$ 
composed of elements denoted by $\tau^{z,i}_{\infty}$.
Notice that for the the indices $i$
for which $\ell(\gamma^i_{\infty})>0$
we have that (up to subsequence)
$(-1)^{z}\frac{\dot{\gamma}^i_n}{\vert \dot{\gamma}^i_n\vert}$
converge to $\tau^{z,i}_\infty$.

\medskip

Define $\Gamma_\infty:G\to\mathbb{R}^2$ in such a way that 
$\Gamma_{\infty}\vert_{E_i}=\gamma^i_{\infty}$
and call $\mathcal{N}_{\infty}:=(G,\Gamma_\infty)$.
We want to prove that  $\mathcal{N}_\infty\in \mathcal{C}_{\rm{Deg}}$.

\medskip

First of all we claim that the angle condition is verified for $\N_\infty$ with the family $\Tau_\infty$. 
It is easy to check that there exist rotations $\mathrm{R}_p$ (obtained as limits of those for $\N_n$) 
that verify~\eqref{eq4}. It remains to prove that for the regular curves $\gamma_\infty^i$ the vectors 
$\tau_\infty^i$ coincide with the outer tangents and that for the constant curves we have 
$\tau_\infty^{0,i}=-\tau_\infty^{1,i}$. For each $i\in\{1,\ldots, N\}$ there are three possible cases: a regular 
curve converges to a regular curve; a regular curve converges to a constant curve; a constant curve 
converges to a constant curve.
In the first case the claim follows from the above argument. In the second case the 
claim follows by Lemma~\ref{pocaoscillazione}. In the third case the claim follows trivially.

\medskip

It remains to verify that $H=\Sing(\N_\infty)$ is stratified-straight.
We proceed by induction: we set $H_0=H$ and supposing to have obtained $H_i$ 
we construct $\Sigma_i$ and $H_{i+1}$. We can assume without loss of generality that $H_i$ is connected (otherwise we apply the same argument to each connected component) and up to a translation that $0\in \Gamma_n(H_i)$. We define $L_n^i:=L(\Gamma_n(H_i))$ and consider the rescaled networks $\N_n^{(i)}=(H_i,\Gamma_n^{(i)})$ with
\begin{equation*}
\Gamma_n^{(i)}=\frac{1}{L_n^i}\Gamma_n\,.
\end{equation*}
Then $L(\N_n^{(i)})=1$. Moreover, at each step the network $\mathcal{N}_n^{(i)}$ is defined starting from the singular part of the previous one (indeed $H_0=H$, while for $H_i$, $i\geq 1$, see \eqref{eq:inductionstep} below), therefore we have that $L_n^i\to 0$.  Thanks to the scaling property \eqref{eq:scaling} we thus have
\begin{equation*}
\int_{\N_n^{(i)}}\vert \vec{k}^i\vert^2\,\mathrm{d}s
=L_n^i 
\int_{\Gamma_n(H_i)}\vert\vec{k}^i\vert^2\,\mathrm{d}s\to 0\quad\text{as}\; n\to\infty
\end{equation*}
and in particular $\widetilde{\mathcal{E}}(\N_n^{(i)})\leq C<\infty$ for some constant $C$.
Hence we can repeat the previous reasoning to conclude that 
up to subsequence the networks $\N_n^{(i)}$ converge weakly in $H^2$ and 
strongly in $C^1$ to a network 
$\N_\infty^{(i)}=(H_i,\Gamma_\infty^{(i)})$. By $C^1$--convergence $L(\N_\infty^{(i)})=1$ and thus 
the limit network has at least one regular curve. 
By Lemma~\ref{pocaoscillazione} we have that the curves of $\N_\infty^{(i)}$ are either constant or straight segments, 
since all surviving curves have a constant tangent. 
We then define 
\begin{equation}\label{eq:inductionstep}
    \Sigma_i=\Gamma_\infty^{(i)}\qquad H_{i+1}=\mathrm{Sing}((H_i,\Sigma_i))
\end{equation}
and we repeat the process until $\mathrm{Sing}((H_i,\Sigma_i))=\emptyset$.

After a finite number of steps the process stops and we obtain that the strata 
$\emptyset=H_q\subset\ldots\subset H_0=H$ by construction satisfy 
the condition of Definition~\ref{def:stratstraightRd}.

We have thus concluded that $\N_\infty$ satisfies the angle condition with (real or virtual) tangents $\Tau_\infty$ and $\Sing(\N_\infty)$ 
is stratified--straight whose (real or virtual) tangents are consistent with $\Tau_\infty$, and thus by Definition \ref{degnetwRd}
$\mathcal{N}_{\infty}\in \mathcal{C}_{\rm{Deg}}$.
\end{proof}

To convince the reader that in Definition~\ref{degnetwRd}
we cannot replace the notion of stratified straight subgraph
by the simpler one of straight subgraphs,
and hence the definition of degenerate networks is in a sense sharp,
we give an example of a sequence of regular networks with equibounded 
energy which converges to a singular network
whose singular part is stratified straight but nor straight.

\begin{Example}\label{limitstrstr}
Consider a $5$--graph $G$ where in particular
\[
(0,1)\sim (0,2)\,,\,\,(1,1)\sim (0,3)\,,\,\,(1,2)\sim (1,3)\,,
\]
\[
\measuredangle d^{0,1},d^{0,2}=0\,,\qquad \measuredangle d^{1,1},d^{0,3}=\measuredangle d^{1,2},d^{1,3}=\tfrac{\pi}{2}\,,
\]
where $\measuredangle v,w \in[0,2\pi)$ here identifies the least positive angle between two vectors $v,w$ in the plane (see Figure \ref{fig:LimitStrStr} on the left).

We construct a sequence of regular networks $\mathcal{N}_n=(G,\Gamma_n)$
that converge as $n\to\infty$ to a degenerate network whose singular part is stratified straight.

\begin{figure}[h]
\begin{center}
\begin{tikzpicture}[scale=1, rotate=90]
\draw[thick]
(-1,0)to[out=-135,in=180](-0.2,-1.5)
(-0.2,-1.5)to[out=0,in=-65](0.86,-0.5)
(-1,0)to[out=135,in=180](-0.2,1.5)
(-0.2,1.5)to[out=0,in=65](0.86,0.5);
\draw[thick, color=black,scale=3.71,domain=2.625: 3.15,
smooth,variable=\t,shift={(-0.26,1.001)},rotate=0]plot({1.*sin(\t r)},
{1.*cos(\t r)}) ; 
\draw[thick, color=black,scale=3.71,domain=0:0.52,
smooth,variable=\t,shift={(-0.26,-1.001)},rotate=0]plot({1.*sin(\t r)},
{1.*cos(\t r)}) ; 
\draw[thick, color=black,scale=1,domain=1.04: 2.09,
smooth,variable=\t,shift={(0,0)},rotate=0]plot({1.*sin(\t r)},
{1.*cos(\t r)}) ; 
\path[font=\normalsize]
(-0.2,0)node[left]{$E_1$}
(-0.2,0)node[right]{$E_2$}
(1,0)node[above]{$E_3$}
(-0.2,1.5)node[left]{$E_4$}
(-0.2,-1.5)node[right]{$E_5$};
\end{tikzpicture}\qquad
\begin{tikzpicture}[scale=1]
\draw[thick, color=black!50!white, scale=1,domain=-3.15: 3.15,
smooth,variable=\t,shift={(0,0)},rotate=0]plot({1.*sin(\t r)},
{1.*cos(\t r)}) ; 
\draw[thick, color=black!50!white,dashed,scale=3.71,domain=2.3:2.5,
smooth,variable=\t,shift={(-0.26,1.001)},rotate=0]plot({1.*sin(\t r)},
{1.*cos(\t r)}) ; 
\draw[thick, color=black!50!white,scale=3.71,domain=2.5:2.625,
smooth,variable=\t,shift={(-0.26,1.001)},rotate=0]plot({1.*sin(\t r)},
{1.*cos(\t r)}) ; 
\draw[thick, color=black!50!white,dashed,scale=3.71,domain=-3.01: -2.8,
smooth,variable=\t,shift={(-0.26,1.001)},rotate=0]plot({1.*sin(\t r)},
{1.*cos(\t r)}) ; 
\draw[thick, color=black!50!white,scale=3.71,domain=-3.15: -3.01,
smooth,variable=\t,shift={(-0.26,1.001)},rotate=0]plot({1.*sin(\t r)},
{1.*cos(\t r)}) ; 
\draw[black!50!white]
(0,-1)--(0,0)
(-1,0)--(-1,3.72)
(-1,3.72)--(0,0)
(-1,3.72)--(0.86,0.5)
(-1,0)--(0,0)
(0,0)--(0.86,0.5)
(0,0)--(0.86,-0.5);
\draw[thick, color=red,scale=3.71,domain=2.625: 3.15,
smooth,variable=\t,shift={(-0.26,1.001)},rotate=0]plot({1.*sin(\t r)},
{1.*cos(\t r)}) ; 
\draw[thick, color=red,scale=3.71,domain=0:0.52,
smooth,variable=\t,shift={(-0.26,-1.001)},rotate=0]plot({1.*sin(\t r)},
{1.*cos(\t r)}) ; 
\draw[thick, color=red,scale=1,domain=1.04: 2.09,
smooth,variable=\t,shift={(0,0)},rotate=0]plot({1.*sin(\t r)},
{1.*cos(\t r)}) ; 
\path[font=\normalsize]
(-0.75,3)node[below]{$\alpha$}
(0.15,-0.3)node[below]{$r$}
(-1,1.2)node[left]{$R$}
(0.85,0)node[left]{$2\alpha$};
\end{tikzpicture}
\caption{Representation of the graph $G$ with assigned angles and construction of the sequence.}\label{fig:LimitStrStr}
\end{center}
\end{figure}
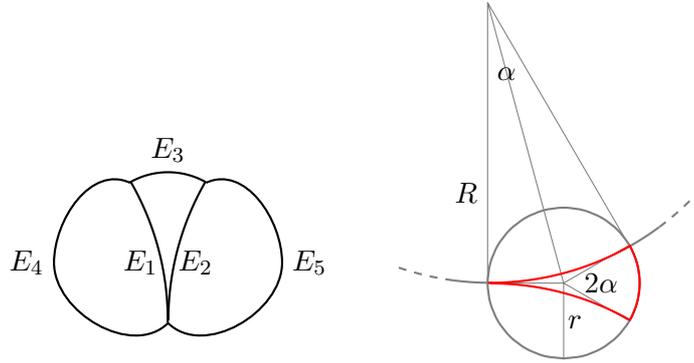

Consider three immersions $\gamma^1,\gamma^2,\gamma^3:[0,1]\to \mathbb{R}^2$
such that
the curve $\gamma^3([0,1])$ is an arc of circle of radius $r$ and length $2\alpha r$
and both $\gamma^1([0,1])$ and $\gamma^2([0,1])$ are arcs of circle of radius $R$ and length
$\alpha R$ with $R=\frac{\sin(\frac{\pi}{2}-\alpha)}{\sin\alpha}r$.
Then
\begin{equation*}
\mathcal{E}(\gamma^1)+\mathcal{E}(\gamma^2)+\mathcal{E}(\gamma^3)
=2\left(r\alpha\cot \alpha
+\frac{\alpha\tan\alpha}{r}\right)+2\alpha r+ \frac{2\alpha}{r}\,.
\end{equation*}
We construct a sequence of immersions 
$\gamma_n^1,\gamma_n^2,\gamma_n^3$ simply
by taking $r_n=\frac{1}{n}$ and $\alpha_n=\frac{1}{n^2}$.
Then as $n\to\infty$ the lengths of the three curves go to zero and
\begin{equation*}
\mathcal{E}(\gamma_n^1)+\mathcal{E}(\gamma_n^2)+\mathcal{E}(\gamma_n^3)
=2\left(\frac{1}{n}
+\frac{1}{n^3}\right)+\frac{2}{n^3}+\frac{2}{n}+o\left(\frac{1}{n^5}\right)
\overset{n\to\infty}{\longrightarrow}0\,.
\end{equation*}
Moreover it is not possible to construct a triangle with three edges of positive length and 
angles $0, \tfrac{\pi}{2},  \tfrac{\pi}{2}$.

Notice that the energy of the sequence need not be infinitesimal.
In fact by taking $r_n=\alpha_n=\frac{1}{n}$, one gets 
that the above energy converges to $2$. 
This highlights the fact that 
curves of a network need not
become flatter and flatter along a sequence even if their length goes to zero.
Hence our 
characterization of the class of degenerate networks
is not related to the curvature of the approximating sequences.

\end{Example}

\section{Relaxation}\label{relax}

Let us define the following extended problem.

\begin{prob}\label{problemarilassato}
Given an $N$--graph $G$ with assigned angles
we want to find
\begin{equation*}\label{infprob}
\inf\left\lbrace \overline{\mathcal{E}}(\mathcal{N})
\;\vert\: \mathcal{N}=(G,\Gamma)\;\text{is a (possibly) singular network}\right\rbrace \,,
\end{equation*}
where $\overline{\mathcal{E}}$ is the extension of $\mathcal{E}$ defined as
\begin{equation}\label{estensione}
\overline{\mathcal{E}}(\mathcal{N}):=
\begin{cases}
\widetilde{\mathcal{E}}(\mathcal{N})
\quad\quad\text{if}\;\mathcal{N}\in \mathcal{C}_{\rm{Deg}}\,,\\
+\infty \;\quad\quad\; \text{otherwise}.
\end{cases}
\end{equation}
\end{prob}

The reason to consider this functional will be clear at the end of this section: we will prove that it coincides 
with the relaxation of the functional $\E$ defined in \eqref{riparametrizzato}, and thus it is the natural 
extension of $\E$ from $\mathcal{C}_{\mathrm{Reg}}$ to $\mathcal{C}_{\mathrm{Deg}}$.
\begin{prop}[Recovery sequence]\label{recovery}
For every network $\N=(G,\Gamma)\in\mathcal{C}_{\mathrm{Deg}}$ 
there exists a sequence of networks 
$\{\N_n\}_{n\in\mathbb{N}}$  
such that 
\begin{equation*}
\mathcal{N}_n\overset{H^2}{\longrightarrow}\N\quad
\text{and}\quad\mathcal{E}(\N_n)\to \overline{\mathcal{E}}(\N)\,.
\end{equation*}
\end{prop}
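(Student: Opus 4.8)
The plan is to construct the recovery sequence stratum by stratum, peeling off the structure of the stratified-straight singular part $H=\Sing(\N)$ and inserting, at each level, a rescaled copy of the straight sub-network that the stratified-straightness provides. Since $\N\in\mathcal{C}_{\mathrm{Deg}}$ by hypothesis, there is a finite sequence of strata $\emptyset=H_q\subset\cdots\subset H_0=H$ together with maps $\Sigma_j\colon H_j\to\R^d$ whose regular curves are segments and which all satisfy the angle condition with tangents consistent with the $\tau^{z,i}$ of $\N$. I would argue by induction on the number of strata $q$. For $q=0$ (i.e. $\Sing(\N)=\emptyset$) there is nothing to prove since $\N$ is already regular and $\overline{\mathcal E}(\N)=\mathcal E(\N)$; the constant sequence works.

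\textbf{Inductive step.} Assume the statement holds for every degenerate network whose singular part has at most $q-1$ strata. Given $\N$ with step $q$, the first stratum datum $(H_0,\Sigma_0)=(H,\Sigma_0)$ is a \emph{straight} network (its regular curves are segments) whose own singular part $H_1=\Sing((H,\Sigma_0))$ carries, via $\Sigma_1,\ldots,\Sigma_{q-1}$, the structure of a degenerate network with $q-1$ strata. By the inductive hypothesis applied to $(H_1,\Sigma_1)$ — or more precisely to a genuine degenerate network built on $H_1$ with those strata — we obtain, for each $n$, a regular perturbation of the collapsed stratum. I would then define $\N_n$ by: (i) keeping the regular curves of $\N$ essentially unchanged (slightly adjusting endpoints to keep the network connected); (ii) replacing each curve of $\N$ that is singular, hence lies in $H$, by the corresponding curve of $\frac{1}{n}\,\Sigma_0$, suitably rotated (by the rotations $\mathrm R_p$ furnished by the angle condition) and translated so that it is attached at the correct junction point; (iii) where $\Sigma_0$ itself is singular — i.e. on the edges of $H_1$ — insert instead the $n$-th term of the recovery sequence for $(H_1,\Sigma_1)$, rescaled by a further factor like $1/n^2$ so that its size is negligible compared with $1/n$. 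The small "connector" arcs needed to glue a macroscopic regular curve of $\N$ to the shrinking copy of $\Sigma_0$ (matching both position and prescribed tangent) can be taken with curvature $O(n)$ on a set of length $O(1/n)$, contributing $O(1/n)\to 0$ to the energy — this is the same kind of estimate used implicitly in the scaling computations \eqref{eq:scaling} and in Example~\ref{limitstrstr}.

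\textbf{Energy and convergence.} For the energy: the segments of $\frac1n\Sigma_0$ have zero curvature, so by \eqref{eq:scaling} their total curvature energy is $0$ and their length contribution is $\tfrac1n L(\Sigma_0)\to 0$; the deeper inserted pieces scale like $1/n^2$ times a bounded quantity, hence also vanish; the connector arcs contribute $o(1)$ as above; and the genuinely regular curves of $\N$ contribute, in the limit, exactly $\mathcal E$ of those curves, which equals $\widetilde{\mathcal E}(\N)=\overline{\mathcal E}(\N)$ since the singular curves of $\N$ carry zero energy by definition of $\widetilde{\mathcal E}$ in \eqref{funzionalegenerale}. Thus $\mathcal E(\N_n)\to\overline{\mathcal E}(\N)$. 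For the convergence $\N_n\overset{H^2}{\longrightarrow}\N$: on each edge $E_i$ corresponding to a regular curve of $\N$, $\Gamma_n|_{E_i}\to\Gamma|_{E_i}$ in $H^2$ because we changed it only by an infinitesimal reparametrization and boundary correction; on each edge of $H$ the curve $\Gamma_n|_{E_i}$ has diameter $O(1/n)\to 0$ and bounded $H^2$ norm (its $\ddot\gamma$ contribution is controlled exactly as in \eqref{normadersec} because the energy stays bounded), so it converges to the constant map, which is $\Gamma|_{E_i}$. One must also check the angle condition is met by each $\N_n$: this is where the rotations $\mathrm R_p$ from Definition~\ref{angolifissatiRd} and the consistency of the tangents of $\Sigma_0$ with the $\tau^{z,i}$ of $\N$ are used, guaranteeing that the inserted straight pieces attach with precisely the prescribed directions, up to the small connector arcs which we design to interpolate the (real) tangents continuously.

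\textbf{Main obstacle.} The delicate point is item (iii) together with the gluing: one must verify that the various length scales $1/n,1/n^2,\ldots,1/n^{?}$ (one per stratum) can be chosen so that each deeper stratum is asymptotically negligible relative to the one above it, while still leaving enough room for the connector arcs whose curvature blows up; and one must confirm that all these local modifications can be performed simultaneously without the shrinking pieces overlapping and without destroying connectedness or the angle condition of the \emph{whole} network $\N_n$. Making this bookkeeping precise — essentially an induction in which the inductive hypothesis must be strengthened to control not only the energy but also the diameter and the attaching tangents of the recovery sequence — is the technical heart of the proof; the rest is the routine estimates sketched above.
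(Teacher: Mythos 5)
Your overall skeleton (desingularize the singular part stratum by stratum, inserting a shrinking copy of the straight realization $\Sigma_0$ and gluing it to the surviving regular curves) is the same as the paper's, but the crucial quantitative step — the gluing — is wrong as you state it, and this is a genuine gap rather than bookkeeping. You claim the connector arcs can be taken with curvature $O(n)$ on length $O(1/n)$, "contributing $O(1/n)$ to the energy": an arc with curvature $O(n)$ and length $O(1/n)$ has $\int|\vec k|^2\,ds\sim n^2\cdot\tfrac1n=n\to\infty$, so this estimate is false. Worse, the geometric constraint cannot be met at that scale at all: since the tangents of the inserted segments of $\Sigma_0$ coincide with the prescribed tangents $\tau^{z,i}$ of the regular curves, the generic mismatch to be absorbed is a \emph{lateral} offset $h$ between two parallel lines with the tangent direction fixed at both ends; a connector confined to a region of size comparable to $h$ would need curvature $\sim 1/h$ and hence energy $\sim 1/h\to\infty$. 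The correct mechanism (Lemma~\ref{cambiodibinari}) is a "change of train tracks" of length $\sim\sqrt h$ and curvature $O(1)$, costing energy $\approx 4\sqrt h$, and to make room for it one needs the two-scale trick that your construction omits: first straighten each regular curve near the collapsed junction over a segment of length $l\le\eps$ (Lemma~\ref{pezzidisegmenti}), then rescale the inserted stratum so that $\Sigma_0(H_0)\subset B_{l^3}(0)$; the offset is then $h\sim l^3$, the track change spans $\sim l^{3/2}\ll l$ (so it fits inside the straightened piece), and its energy $\sim l^{3/2}$ vanishes with $\eps$. Without this separation of scales your connectors either destroy the exact angle condition or carry divergent energy.

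Two further points. First, your $H^2$-convergence claim for the modified regular curves ("only an infinitesimal reparametrization and boundary correction") also needs an argument: concatenating a curve with a small-energy piece and reparametrizing at constant speed is controlled in $H^2$ via Lemma~\ref{lem:StimaH2} and Remark~\ref{rem:StimaH2Concatenation}, not automatically. Second, your induction on the number of strata with nested scales $1/n,1/n^2,\dots$ is a reasonable organizational variant of the paper's iteration (the paper simply repeats the same $\eps$-desingularization on each successive stratum within one construction), but you yourself leave its inductive hypothesis unspecified; once the correct connector estimate above is in place, that iteration closes exactly as in the paper, so the missing idea is precisely the straightening-plus-$l^3$-rescaling and the $\sqrt h$ energy law for the track change, not the stratum bookkeeping.
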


To prove the above proposition we will take advantage of the following preliminary
lemmas.

We recall the following version of a classical lemma.
\begin{lem}\label{lemma:L2dilation}
Let $f\in L^2([0,1];\mathbb{R}^d)$. Then
\[
\|f-f\circ\phi\|_{L^2}\to 0\qquad\text{as}\quad \|\phi-Id\|_{C^0(0,1)}\to 0
\]
where $\phi$ varies among all continuous maps $\phi:[0,1]\to[0,1]$.
\end{lem}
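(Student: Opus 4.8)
The statement is a standard density/continuity fact: $L^2$ functions are continuous with respect to ``almost-identity reparametrizations'' of the domain. My plan is to prove it first for a dense class of nice functions, then extend by an $\varepsilon/3$ argument, being careful that the perturbation maps $\phi$ need not be invertible or even monotone, so I cannot blindly use a change of variables.

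First I would fix $\varepsilon>0$ and choose, by density of continuous functions in $L^2([0,1];\mathbb{R}^d)$, a function $g\in C^0([0,1];\mathbb{R}^d)$ with $\|f-g\|_{L^2}<\varepsilon$. Then I would estimate
\[
\|f-f\circ\phi\|_{L^2}\le \|f-g\|_{L^2}+\|g-g\circ\phi\|_{L^2}+\|g\circ\phi-f\circ\phi\|_{L^2}.
\]
The first term is $<\varepsilon$ by construction. For the middle term, since $g$ is uniformly continuous on the compact interval $[0,1]$, given $\varepsilon$ there is $\delta>0$ such that $|s-t|<\delta$ implies $|g(s)-g(t)|<\varepsilon$; hence if $\|\phi-\mathrm{Id}\|_{C^0}<\delta$ then $|g(\phi(x))-g(x)|<\varepsilon$ for every $x\in[0,1]$, so $\|g-g\circ\phi\|_{L^2}\le\varepsilon$.

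The delicate term is the third one, $\|(g-f)\circ\phi\|_{L^2}$, because $\phi$ is only assumed continuous and the naive bound $\int_0^1|(g-f)(\phi(x))|^2\,dx = \int_0^1 |(g-f)(y)|^2\,d(\phi_\#\mathcal{L}^1)(y)$ cannot be controlled by $\|g-f\|_{L^2}$ unless the pushforward measure $\phi_\#\mathcal{L}^1$ has bounded density. This is where I expect the real work to be, and I suspect the lemma as literally stated needs $\phi$ to be at least nondecreasing (or bi-Lipschitz with controlled constants), which is exactly the situation in which it will be applied (reparametrizations of curves). Assuming $\phi$ is nondecreasing and surjective onto $[0,1]$ — the relevant case — one has $\phi_\#\mathcal{L}^1 \le$ something comparable to $\mathcal{L}^1$ only after also assuming a lower bound on the ``speed'', which again holds for the reparametrizations used in the paper; alternatively, if $\phi$ is additionally bi-Lipschitz with $\mathrm{Lip}(\phi^{-1})\le 1+o(1)$ as $\|\phi-\mathrm{Id}\|_{C^0}\to 0$, the change of variables gives $\|(g-f)\circ\phi\|_{L^2}^2\le (1+o(1))\|g-f\|_{L^2}^2 <(1+o(1))\varepsilon^2$. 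Either way, combining the three estimates yields $\|f-f\circ\phi\|_{L^2}\le C\varepsilon$ for $\|\phi-\mathrm{Id}\|_{C^0}$ small enough, with $C$ absolute, which proves the claim.

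In summary, the skeleton is: (i) approximate $f$ by a uniformly continuous $g$; (ii) handle $g-g\circ\phi$ by uniform continuity and the sup-bound on $\phi-\mathrm{Id}$; (iii) handle the error term $(f-g)\circ\phi$ via a change-of-variables/pushforward estimate, which is the only step requiring a structural hypothesis on $\phi$ beyond continuity — and precisely the step I would flag as the main obstacle, to be resolved by recording (or using) the extra regularity of the reparametrizations arising in the application.
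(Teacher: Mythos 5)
Your proposal follows the same route as the paper's proof: approximate $f$ in $L^2$ by a continuous $g$, split by the triangle inequality, and control $\|g-g\circ\phi\|_{L^2}$ via uniform continuity of $g$ and the bound on $\|\phi-\mathrm{Id}\|_{C^0}$. The difference lies in the third term $\|(f-g)\circ\phi\|_{L^2}$: the paper disposes of the whole reduction in one sentence (``by the triangle inequality and the density of continuous functions it is sufficient to prove the result when $f$ is continuous'') and never addresses this term, whereas you correctly single it out as the only step requiring structure on $\phi$ beyond continuity. Your caution is justified: for a merely continuous (even nondecreasing) $\phi$ the composition $f\circ\phi$ is not well defined on $L^2$ equivalence classes (a $\phi$ constant on an interval pulls a null set back to a set of positive measure), and with a pointwise representative one can take $f(t)=|t-\tfrac12|^{-1/4}$ and maps $\phi_n$ collapsing an interval of length $2\delta_n$ onto a point at distance $\delta_n^4$ from the singularity, so that $\|\phi_n-\mathrm{Id}\|_{C^0}\to 0$ while $\|f\circ\phi_n\|_{L^2}^2\gtrsim \delta_n^{-1}\to\infty$; so the lemma as literally stated does need a restriction of the kind you indicate (a pushforward/density bound, e.g. $\phi$ bi-Lipschitz with constants tending to $1$). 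In the only place the lemma is invoked, the proof of Lemma \ref{lem:StimaH2}, the map is the affine dilation $\phi(t)=(1+\tfrac{\delta}{l})t$, which is exactly of this type, and there your change-of-variables estimate $\|(f-g)\circ\phi\|_{L^2}^2\le (1+o(1))\|f-g\|_{L^2}^2$ closes the $\varepsilon/3$ argument. So your completed proof is sound precisely in the regime where the paper uses the lemma, and the ``obstacle'' you flag is a genuine gap in the paper's own one-line reduction rather than in your argument.
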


\begin{proof}
By the triangle inequality and the density of continuous functions on $L^2$ it is sufficient to prove the result when $f$ is continuous. In this case, being defined on a compact set, $f$ is uniformly continuous with some modulus of continuity $\omega_f$. 
Then
\[
\int_0^1|f(t)-f(\phi( t))|^2dt \leq \omega_f(\|\phi-Id\|_{C^0})^2\to 0 \qquad\text{as}\quad \|\phi-Id\|_{C^0}\to 0.
\]
\end{proof}

\begin{lem}\label{lem:StimaH2}
Let $l\in(0,+\infty)$ and $0\leq \delta\leq l$. Let $\gamma:[0,1]\to \mathbb{R}^d$ and $\sigma:[1,1+\tfrac{\delta}{l}]\to\mathbb{R}^d$ be two curves of class $H^2$ with $\gamma(1)=\sigma(1)$, $\dot\gamma(1)=\dot\sigma(1)$ and with constant speed $|\dot\gamma|\equiv |\dot\sigma|\equiv l$. Consider the concatenation $\gamma *\sigma:[0,1+\tfrac{\delta}{l}]\to \mathbb{R}^d$ defined in the natural way and its linear reparametrization $\gamma_\sigma:[0,1]\to \mathbb{R}^d$ given by
\[
\gamma_\sigma(t)=(\gamma*\sigma)\left((1+\tfrac{\delta}{l})t\right).
\]
Then
\[
\|\gamma-\gamma_\sigma\|_{H^2(0,1)}\leq \eta\left( \mathcal{E}(\sigma)\right)
\]
where $\eta$ is a function such that $\eta(z)\to 0$ as $z\to 0^+$.
\end{lem}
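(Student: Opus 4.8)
The plan is to estimate separately the $L^2$-distances $\|\gamma-\gamma_\sigma\|_{L^2}$, $\|\dot\gamma-\dot\gamma_\sigma\|_{L^2}$ and $\|\ddot\gamma-\ddot\gamma_\sigma\|_{L^2}$, and to show that each of them is controlled by a modulus of $\mathcal E(\sigma)$. The crucial geometric observation is that $\mathcal E(\sigma)\to 0$ forces simultaneously $\ell(\sigma)=\delta\to 0$ (since $\beta\,\ell(\sigma)\le\mathcal E(\sigma)$) and, by Lemma \ref{pocaoscillazione} (or its elementary one-curve version, the same Cauchy–Schwarz bound $|\dot\sigma(x)/|\dot\sigma(x)| - \dot\sigma(y)/|\dot\sigma(y)||\le \mathcal E(\sigma)^{1/2}\ell(\sigma)^{1/2}$ used in its proof), that $\sigma$ is nearly straight; so the concatenation $\gamma*\sigma$ is a short, nearly-straight extension of $\gamma$ past its endpoint. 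The linear reparametrization $\gamma_\sigma$ is then, roughly, $\gamma$ precomposed with the affine map $t\mapsto(1+\tfrac\delta l)t$ on $[0, \tfrac{1}{1+\delta/l}]$ and a short tail coming from $\sigma$ on the remaining interval of length $\tfrac{\delta/l}{1+\delta/l}\to 0$.

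First I would reparametrize everything by arclength: let $L:=\ell(\gamma)=l$ (constant speed $l$ on $[0,1]$ means total length $l$) — more precisely, write $c:=\gamma*\sigma$ as a single $H^2$ curve of length $l+\delta$ with constant speed, and note $\gamma_\sigma(t)=c((1+\tfrac\delta l)t)$. For $t$ such that $(1+\tfrac\delta l)t\le 1$, i.e. $t\le (1+\tfrac\delta l)^{-1}=:t_*$, we have $\gamma_\sigma(t)=\gamma((1+\tfrac\delta l)t)$, so on $[0,t_*]$ the difference $\gamma(t)-\gamma_\sigma(t)=\gamma(t)-\gamma(\phi(t))$ with $\phi(t)=(1+\tfrac\delta l)t$ and $\|\phi-\mathrm{Id}\|_{C^0}\le \tfrac\delta l\to 0$. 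On $[t_*,1]$ (a set of measure $\tfrac{\delta/l}{1+\delta/l}\to0$), $\gamma_\sigma(t)$ lies on the tail $\sigma$. For the $L^2$ and $\dot{}$-$L^2$ estimates this already suffices: on $[0,t_*]$ apply Lemma \ref{lemma:L2dilation} to $f=\gamma$ and to $f=\dot\gamma$ (using the chain rule $\dot\gamma_\sigma(t)=(1+\tfrac\delta l)\dot\gamma(\phi(t))$, so $\dot\gamma(t)-\dot\gamma_\sigma(t)=(\dot\gamma(t)-\dot\gamma(\phi(t)))-\tfrac\delta l\dot\gamma(\phi(t))$, both terms small in $L^2$ since $\|\dot\gamma\|_\infty=l$ is bounded by $\mathcal E(\sigma)$-independent data — here one must be slightly careful: $l$ is fixed, not small, so the factor $\tfrac\delta l$ is what decays); on $[t_*,1]$ bound $|\gamma-\gamma_\sigma|$ by $\mathrm{diam}(\gamma([0,1]))+\mathrm{diam}(\sigma)$ times the measure of the interval, and $|\dot\gamma-\dot\gamma_\sigma|$ by $2l\cdot(1+\tfrac\delta l)$ times that measure, both $\to0$. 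All these bounds depend on $\delta=\ell(\sigma)$ only, hence on $\mathcal E(\sigma)$.

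The main obstacle is the second-derivative term $\|\ddot\gamma-\ddot\gamma_\sigma\|_{L^2}$. On $[0,t_*]$ we have $\ddot\gamma_\sigma(t)=(1+\tfrac\delta l)^2\ddot\gamma(\phi(t))$, so $\ddot\gamma(t)-\ddot\gamma_\sigma(t)=(\ddot\gamma(t)-\ddot\gamma(\phi(t)))-((1+\tfrac\delta l)^2-1)\ddot\gamma(\phi(t))$; the first piece goes to $0$ in $L^2$ by Lemma \ref{lemma:L2dilation} applied to $f=\ddot\gamma\in L^2$, and the second by $((1+\tfrac\delta l)^2-1)\|\ddot\gamma\|_{L^2}\to0$. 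The genuinely delicate part is the tail $[t_*,1]$, where $\ddot\gamma_\sigma(t)=(1+\tfrac\delta l)^2\ddot\sigma(\psi(t))$ for the appropriate affine $\psi$: a change of variables gives $\int_{t_*}^1|\ddot\gamma_\sigma|^2\,dt = (1+\tfrac\delta l)^{4}\cdot\tfrac{\delta/l}{1+\delta/l}\cdot\fint|\ddot\sigma|^2$, and since $\sigma$ has constant speed $l$, $\fint_0^{\delta/l}|\ddot\sigma|^2 = \tfrac{l}{\delta}\int|\ddot\sigma|^2 = \tfrac{l}{\delta}\cdot l^3\cdot\big(\tfrac{1}{l^3}\int|\ddot\sigma|^2\big)$, and $\tfrac1{l^3}\int_0^{\delta/l}|\ddot\sigma|^2 = \int_\sigma|\vec k_\sigma|^2\,ds \le \alpha^{-1}\mathcal E(\sigma)$; combining, $\int_{t_*}^1|\ddot\gamma_\sigma|^2\lesssim l^2\,\mathcal E(\sigma)\to0$. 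Similarly $\int_{t_*}^1|\ddot\gamma|^2\le \mathcal E(\gamma)\cdot|[t_*,1]|$-type bounds $\to0$ since the interval shrinks. Collecting all pieces, each contribution is bounded by an explicit increasing function of $\mathcal E(\sigma)$ vanishing at $0$ (the $L^2$-modulus terms from Lemma \ref{lemma:L2dilation} being the non-explicit but still vanishing ones), so their sum defines the desired $\eta$ with $\eta(z)\to0$ as $z\to0^+$. The one subtlety to flag is that $\eta$ may also depend on $\gamma$ and on $l$ through $\|\ddot\gamma\|_{L^2}$, $\mathrm{diam}(\gamma)$ and the modulus of continuity of $\gamma,\dot\gamma,\ddot\gamma$ — this is harmless for the intended application in Proposition \ref{recovery}, where $\gamma$ is fixed and $\sigma$ is the varying short correction, but I would state it carefully so the dependence is transparent.
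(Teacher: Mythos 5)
Your proposal is correct and follows essentially the same route as the paper: split at $t_*=(1+\tfrac{\delta}{l})^{-1}$, treat the dilated piece with Lemma \ref{lemma:L2dilation}, treat the $\ddot\gamma$-part on the shrinking interval by absolute integrability, and control the $\sigma$-tail via the constant-speed identity $\int|\ddot\sigma|^2\,dx=l^3\int_\sigma|\vec{k}_\sigma|^2\,ds\leq l^3\mathcal{E}(\sigma)$ together with $\delta=\ell(\sigma)\leq\mathcal{E}(\sigma)$. The only differences are cosmetic: the paper disposes of the lower-order terms at once by the Poincar\'e inequality (using $\gamma(0)=\gamma_\sigma(0)$) instead of estimating them directly, and your inline bound $\int_{t_*}^1|\ddot\gamma|^2\leq\mathcal{E}(\gamma)\,|[t_*,1]|$ is not literally valid (since $|\ddot\gamma|^2$ need not be pointwise bounded) and should be replaced, as in the paper, by absolute continuity of the integral of $|\ddot\gamma|^2$ over the shrinking interval.
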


\begin{proof}
It is sufficient to prove the inequality for the second derivatives, because then by the Poincaré inequality we obtain the inequality for the full $H^2$ norm. We have that:
\begin{align*}
    \int_0^1|\ddot\gamma(t)-\ddot\gamma_\sigma(t)|^2dt  &= \int_0^{\frac{1}{1+\frac{\delta}{l}}}\left|\ddot\gamma(t)-\ddot\gamma\left((1+\tfrac{\delta}{l})t\right)\right|^2dt+\int_{\frac{1}{1+\frac{\delta}{l}}}^1 |\ddot \gamma(t)-\ddot\sigma\left((1+\tfrac{\delta}{l})t\right)|^2dt\\
    &\leq \int_0^{\frac{1}{1+\frac{\delta}{l}}}\left|\ddot\gamma(t)-\ddot\gamma\left((1+\tfrac{\delta}{l})t\right)\right|^2dt+2\int_{\frac{1}{1+\frac{\delta}{l}}}^1 |\ddot \gamma(t)|^2dt\\
    & \phantom{\leq}+2\int_{\frac{1}{1+\frac{\delta}{l}}}^1 |\ddot\sigma\left((1+\tfrac{\delta}{l})t\right)|^2dt.
\end{align*}
Thanks to Lemma \ref{lemma:L2dilation} the first term of the last expression goes to zero as $\delta\to 0$. The second term goes to zero as $\delta\to 0$ by the absolute integrability of $|\ddot\gamma|^2$. Since $\delta=\ell(\sigma)$, then the first two terms go to zero as $\mathcal{E}(\sigma)\to 0$. After the change of variables $x=(1+\tfrac{\delta}{l})t$, the third term reads
\[
\frac{2}{1+\frac{\delta}{l}}\int_1^{1+\frac{\delta}{l}} |\ddot\sigma(x)|^2\,dx.
\]
Since $\sigma$ is parametrized with constant speed $l$, recalling \eqref{eq:curvature} we have
\[
\int_1^{1+\frac{\delta}{l}} |\ddot\sigma(x)|^2\,dx=l^3\int_\sigma |\vec{k}_\sigma|^2 \, ds \leq l^3\mathcal{E}(\sigma).
\]
Therefore all three terms go to zero as $\mathcal{E}(\sigma)\to 0$, and the thesis is proven.
\end{proof}

\begin{rem}\label{rem:StimaH2Concatenation}
We will use the previous lemma in the following way: whenever we have a curve $\gamma$ parametrized with constant speed and we concatenate it with another curve $\sigma$, then the constant speed parametrization on $[0,1]$ of $\gamma*\sigma$ is close in $H^2$ to $\gamma$ if $\mathcal{E}(\sigma)$ is small.
\end{rem}

\begin{lem}[Change of train tracks]\label{cambiodibinari}
Consider two parallel horizontal straight lines in $\R^2$ at distance $h$. 
There exists an embedded curve 
located between 
 the two lines such that it connects the two lines spanning a horizontal interval of length $b\approx 2\sqrt h$ and with energy $\approx 4\sqrt h$, asymptotically as $h\to 0$. More precisely, there exists a regular curve $\gamma:[0,1]\to\R^2$ such that 
\begin{equation*}
\frac{\dot\gamma(0)}{|\dot\gamma(0)|}=\frac{\dot\gamma(1)}{|\dot\gamma(1)|}=e_1,
\quad \gamma(0)=(0,0),\quad \gamma(1) =(b,h)\,,
\end{equation*}
with
\begin{equation*}
b=2\sqrt{h}(1+o(1)),\quad\text{and}\quad \E(\gamma)=4\sqrt{h}(1+o(1))\,.
\end{equation*}

\end{lem}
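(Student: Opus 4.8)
The plan is to construct $\gamma$ explicitly as a concatenation of two congruent circular arcs, which is the natural "S-shaped" curve that changes train tracks while keeping the tangent horizontal at both endpoints. Fix a small angle $\theta>0$ (to be related to $h$ at the end) and a radius $\rho>0$. Let the first arc start at $(0,0)$ with horizontal tangent $e_1$, curving, say, upward: it is an arc of a circle of radius $\rho$ subtending the angle $\theta$, so that at its end the tangent makes angle $\theta$ with $e_1$. Let the second arc be the point reflection of the first about the midpoint of the curve: it is an arc of the same radius $\rho$ subtending the angle $\theta$ but curving the opposite way, bringing the tangent back to $e_1$ at the endpoint. By this symmetry the two tangents match at the junction (both equal to the unit vector at angle $\theta$), so the concatenation is a regular $C^1$ curve; it is actually $H^2$ (indeed piecewise $C^\infty$ with matching first derivatives), and it is embedded provided $\theta$ is small. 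Reparametrizing on $[0,1]$ with constant speed gives the curve $\gamma$ in the statement.

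Next I would compute the two relevant geometric quantities. A circular arc of radius $\rho$ subtending angle $\theta$ has length $\rho\theta$, vertical rise $\rho(1-\cos\theta)$ and horizontal extent $\rho\sin\theta$. Stacking the two arcs as above, the total vertical rise is $h = 2\rho(1-\cos\theta)$ and the total horizontal extent is $b = 2\rho\sin\theta$. For the energy: each arc has constant curvature $1/\rho$, so $\int |\vec k|^2\,\mathrm{d}s = \theta/\rho$ per arc, hence the bending energy of $\gamma$ is $2\theta/\rho$; its length is $2\rho\theta$; therefore
\[
\E(\gamma) = \frac{2\theta}{\rho} + 2\rho\theta .
\]
Now choose $\rho$ to balance the two terms for fixed $\theta$, i.e. $\rho = 1$, giving $\E(\gamma)=4\theta$ — but we are not free to fix $\rho$, since $h$ is prescribed. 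Instead eliminate $\rho$ via $h = 2\rho(1-\cos\theta) = \rho\theta^2(1+o(1))$ as $\theta\to 0$, so $\rho = h/\theta^2 \,(1+o(1))$, and then
\[
\E(\gamma) = \frac{2\theta^3}{h}(1+o(1)) + \frac{2h}{\theta}(1+o(1)),
\qquad
b = 2\rho\sin\theta = \frac{2h}{\theta}(1+o(1)).
\]
Minimizing the leading part $2\theta^3/h + 2h/\theta$ over $\theta$ gives $\theta^4 = h^2/3$, i.e. $\theta = (h^2/3)^{1/4}$; a cleaner choice that still works is simply $\theta = \sqrt{h}$, which yields $2\theta^3/h = 2\sqrt h$, $2h/\theta = 2\sqrt h$, hence $\E(\gamma) = 4\sqrt h(1+o(1))$ and $b = 2\sqrt h(1+o(1))$, exactly as claimed. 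I would present the computation with $\theta = \sqrt h$ from the start to avoid carrying a free parameter.

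The only genuinely delicate points are bookkeeping rather than conceptual. First, one must verify the asymptotic expansions $1-\cos\theta = \tfrac{\theta^2}{2}(1+o(1))$ and $\sin\theta = \theta(1+o(1))$ are applied consistently so that the stated $b \approx 2\sqrt h$ and $\E(\gamma)\approx 4\sqrt h$ come out with the correct constants; this is routine Taylor expansion. Second, one must check embeddedness: with $\theta = \sqrt h \to 0$ the curve is a shallow S lying in the strip between the two horizontal lines (each arc stays between height $0$ and $h$, since the rise is monotone along each arc), and for $\theta$ small it has no self-intersections — this is immediate from monotonicity of the horizontal coordinate along each arc and the fact that the two arcs occupy disjoint horizontal ranges. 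Third, one should note the constant-speed reparametrization onto $[0,1]$ changes neither the length, nor the bending energy (both are reparametrization invariant), nor the endpoint tangent directions, so all the claimed properties transfer; the resulting $\gamma$ is $H^2$ because the concatenation has matching position and first derivative at the junction. I expect the "main obstacle" to be purely presentational: choosing the cleanest parametrization of the two arcs so that the symmetry making the tangents agree at the junction is manifest, and keeping the $o(1)$ terms honest.
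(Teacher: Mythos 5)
Your proposal is correct and is essentially the paper's own construction: two congruent circular arcs of radius (essentially) $1$ meeting with matching tangents, with opening angle $\theta\approx\sqrt h$, giving $h=2(1-\cos\theta)$, $b=2\sin\theta\approx 2\sqrt h$ and $\E=4\theta\approx 4\sqrt h$. The only cosmetic difference is that you carry the radius $\rho$ as a free parameter before eliminating it, whereas the paper fixes the radius equal to $1$ from the start.
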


\begin{proof}
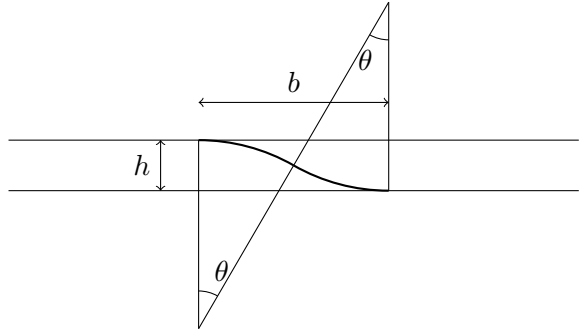
\begin{figure}[h]
\centering
\begin{tikzpicture}[scale=2.5]
\draw[<->] (-0.2,0)--(-0.2,-0.268);
\node[left] at (-0.2,-0.134) {$h$};
\draw[<->] (0,0.2)--(1,0.2);
\node[above] at (0.5,0.2) {$b$};
\draw (-1,0)--(2,0);
\draw (-1,-0.268)--(2,-0.268);

\draw (0,0)--(0,-1);
\draw (0.5,-0.134)--(0,-1);
\draw (1,-0.268)--(1,0.732);
\draw (0.5,-0.134)--(1,0.732);

\draw[thick] (0,0) arc (90:60:1);
\draw[thick] (1,-0.268) arc (-90:-120:1);
\centerarc[](0,-1)(60:90:0.2);
\node[above right] at (0.03,-0.8) {$\theta$};
\centerarc[](1,0.732)(-90:-120:0.2);
\node[below left] at (0.97,0.532) {$\theta$};
\end{tikzpicture}
\caption{The construction of Lemma~\ref{cambiodibinari}}\label{fig:train}
\end{figure}

We construct the curve by
putting together two congruent arcs of circle of radius $1$ as in Figure~\ref{fig:train}. 
The modulus of the curvature is $1$ and the angle is $\theta$. 
The total length is $2\theta$ which also equals the total squared curvature of the arcs. 
The total energy $\mathcal{E}(\gamma)$ is thus $4\theta$. 
The height $h$ and the base $b$ are related to $\theta$ by
\begin{equation*}
h=2(1-\cos\theta)\approx \theta^2 \qquad b=2\sin\theta\approx 2\theta\quad\text{as}\;\;\theta\to 0\,,
\end{equation*}
and therefore given $h$ we can choose $\theta=\arccos\left(1-\tfrac{ h}{2}\right)\approx\sqrt{h}$ to obtain
\begin{equation*}
b\approx 2\sqrt{h}\qquad \E(\gamma)\approx 4\sqrt{h}\,.
\end{equation*}
\end{proof}

The following lemma allows us to suppose, in the construction of the recovery sequence, that
each regular curve of $\N$ is straight near its endpoints,
up to a small perturbation and a small change of the value of its elastic energy.

\begin{lem}[Straightening at vertices]\label{pezzidisegmenti}
Consider a curve $\gamma:[0,1]\to\R^d$ of class $H^2$ parametrized with constant speed. 
Then for every $\eps>0$ there exists a curve $\tilde\gamma:[0,1]\to\R^d$ 
of class $H^2$ parametrized with constant speed such that on some interval $[0,a]$ its image is a straight segment of length at most $\eps$
with direction $\gamma'(0)$, and such that
\begin{equation*}
\gamma(0)=\tilde\gamma(0),
\qquad 
\|\gamma-\tilde\gamma\|_{H^2}\leq \eps, \qquad\mathcal{E}(\tilde\gamma)\leq \mathcal{E}(\gamma)(1+\eps).
\end{equation*}
\end{lem}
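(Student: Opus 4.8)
The plan is to build $\tilde\gamma$ by inserting a short straight segment at the starting point and gluing it smoothly to a slightly dilated copy of $\gamma$, then controlling all quantities as the inserted segment shrinks to zero. First I would fix the scale: reparametrize $\gamma$ with constant speed $\ell := \ell(\gamma)$ on $[0,1]$, so that $\dot\gamma(0)$ has length $\ell$. For a small parameter $\delta \in (0,\ell]$ (eventually chosen depending on $\eps$), let $\sigma : [1, 1+\tfrac{\delta}{\ell}] \to \R^d$ be the straight segment starting at $\gamma(1)$... wait — more precisely I want the \emph{new} piece at the \emph{initial} endpoint, so I instead prepend: let $\sigma : [-\tfrac{\delta}{\ell}, 0] \to \R^d$ be the affine curve with $\sigma(0) = \gamma(0)$, $\dot\sigma \equiv \dot\gamma(0)$ (constant speed $\ell$), whose image is the straight segment of length $\delta$ with direction $\dot\gamma(0)$ ending at $\gamma(0)$. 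Concatenate to get $\sigma * \gamma : [-\tfrac{\delta}{\ell}, 1] \to \R^d$, which is $H^2$ since $\dot\sigma(0) = \dot\gamma(0)$ and $\sigma$ is affine (so $\ddot\sigma \equiv 0$, no jump in the distributional second derivative beyond what $\gamma$ already has — note $\ddot\gamma(0)$ may be nonzero but this only affects the pointwise value, not $H^2$ membership). Then set $\tilde\gamma(t) := (\sigma*\gamma)\big((1+\tfrac{\delta}{\ell})t - \tfrac{\delta}{\ell}\big)$, the constant-speed reparametrization on $[0,1]$; on $[0,a]$ with $a = \tfrac{\delta/\ell}{1+\delta/\ell}$ its image is exactly the prepended segment, of length $\delta$ and direction $\dot\gamma(0)$.

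Next I would verify the three conclusions. The equality $\tilde\gamma(0) = \sigma(-\tfrac{\delta}{\ell})$... no: $\tilde\gamma(0) = (\sigma*\gamma)(-\tfrac{\delta}{\ell}) = \sigma(-\tfrac{\delta}{\ell})$, which is the far end of the prepended segment; if we want $\tilde\gamma(0) = \gamma(0)$ instead we should prepend so that $\tilde\gamma(0)$ lands on $\gamma(0)$ — this is just a matter of which endpoint of the segment we declare to be the start, and costs nothing. For the segment length bound we simply take $\delta \le \eps$. For the $H^2$ estimate, this is exactly the situation of Lemma \ref{lem:StimaH2} (with the roles of the two endpoints mirrored, which changes nothing in the argument): $\|\gamma - \tilde\gamma\|_{H^2(0,1)} \le \eta(\mathcal{E}(\sigma))$, and since $\sigma$ is a straight segment of length $\delta$ we have $\mathcal{E}(\sigma) = \ell(\sigma) = \delta$ (no curvature contribution), so $\|\gamma - \tilde\gamma\|_{H^2} \le \eta(\delta) \to 0$ as $\delta \to 0$. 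For the energy: the curvature contribution of $\tilde\gamma$ equals that of $\gamma$ (the segment contributes zero, and $\int |\vec k|^2\,ds$ is reparametrization-invariant), while the length increases by exactly $\delta$, so $\mathcal{E}(\tilde\gamma) = \alpha\int_\gamma|\vec k|^2\,ds + \beta(\ell + \delta)$. Using $\mathcal{E}(\gamma) \ge \beta\ell$ we get $\mathcal{E}(\tilde\gamma) = \mathcal{E}(\gamma) + \beta\delta \le \mathcal{E}(\gamma) + \tfrac{\beta\delta}{\beta\ell}\mathcal{E}(\gamma) = \mathcal{E}(\gamma)(1 + \tfrac{\delta}{\ell})$, so choosing $\delta \le \min\{\eps, \eps\ell\}$ gives all three bounds simultaneously (in the normalization $\alpha=\beta=1$ this is simply $\delta \le \eps\min\{1,\ell\}$).

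The main obstacle — and it is a mild one — is the $H^2$-closeness claim: naively, prepending a segment and reparametrizing dilates $\gamma$ into a slightly shorter interval, and one must check that this dilation is small in $H^2$, not merely in $C^1$. This is precisely what Lemma \ref{lem:StimaH2} (via Lemma \ref{lemma:L2dilation}) is designed to handle: the second derivative $\ddot{\tilde\gamma}$ on the $\gamma$-part is $\ddot\gamma$ evaluated at a rescaled argument $(1+\tfrac\delta\ell)t - \tfrac\delta\ell$, which converges to $\ddot\gamma$ in $L^2$ as $\delta\to 0$ by $L^2$-continuity under $C^0$-small reparametrizations, while on the segment part $\ddot{\tilde\gamma} \equiv 0$ over an interval of length $O(\delta)$, contributing negligibly. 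So the whole proof is: construct the concatenation, invoke Lemma \ref{lem:StimaH2} for the $H^2$ bound, compute the energy directly, and pick $\delta$ small enough. One should also remark that the same construction applies verbatim at the endpoint $\gamma(1)$, and iterating (or doing both ends at once, as noted in Remark \ref{rem:StimaH2Concatenation}) lets one straighten near both vertices, which is how the lemma will be used in the proof of Proposition \ref{recovery}.
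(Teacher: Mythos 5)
Your overall route is the same as the paper's: prepend a short straight segment, pass to the constant--speed reparametrization on $[0,1]$, control the $H^2$ distance with Lemma \ref{lem:StimaH2} (used at the initial endpoint instead of the final one, which indeed changes nothing), and bound the energy by the explicit length bookkeeping $\mathcal{E}(\tilde\gamma)\le\mathcal{E}(\gamma)(1+\delta/\ell)$, using $\mathcal{E}(\gamma)\ge\ell$. Those parts are correct.

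However, there is a genuine gap in how you handle the condition $\tilde\gamma(0)=\gamma(0)$. Your construction prepends the segment that \emph{ends} at $\gamma(0)$, so $\tilde\gamma(0)=\gamma(0)-\delta\,\dot\gamma(0)/|\dot\gamma(0)|\neq\gamma(0)$, and your proposed fix --- ``just a matter of which endpoint of the segment we declare to be the start'' --- does not work: if the initial segment is to start at $\gamma(0)$ \emph{and} have direction $+\gamma'(0)$, it must end at $\gamma(0)+\delta e_1$ (with $e_1=\dot\gamma(0)/|\dot\gamma(0)|$), and then the remaining arc must begin at $\gamma(0)+\delta e_1$, which is impossible if that arc is literally $\gamma$; relabeling the endpoints instead either breaks continuity of the concatenation or reverses the direction of the initial segment and destroys the $C^1$ matching. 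The missing ingredient is precisely the extra step in the paper's proof: one first deforms $\gamma$ into $\gamma_\delta(t)=\gamma(t)+\delta\psi(t)e_1$, where $\psi$ is a fixed $C^2$ cutoff equal to $1$ on $[0,\tfrac14]$ and $0$ on $[\tfrac12,1]$, so that $\gamma_\delta$ starts at $\gamma(0)+\delta e_1$ with unchanged initial tangent, at cost $\|\gamma-\gamma_\delta\|_{H^2}=\delta\|\psi\|_{H^2}$ and $\mathcal{E}(\gamma_\delta)\le\mathcal{E}(\gamma)(1+C\delta)$; only then does one prepend the segment from $\gamma(0)$ to $\gamma(0)+\delta e_1$ and invoke Lemma \ref{lem:StimaH2}. (For the lemma as stated, a global translation of $\gamma$ by $\delta e_1$ would also repair your argument, at $H^2$ cost $\delta$; the cutoff version has the additional virtue of leaving the far endpoint fixed, which is what the application in Proposition \ref{recovery} actually exploits.) So your proposal needs this translation step added; without it, the curve you build fails the base-point requirement at exactly the point where the paper introduces its one extra idea.
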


\begin{proof}
Without loss of generality we suppose that $\gamma(0)=0$, 
$\dot\gamma(0)=e_1$, and for $\delta>0$ we define
\begin{equation*}
\gamma_\delta(t)=\gamma(t)+\delta \psi(t)e_1 \,,
\end{equation*}
where $\psi:[0,1]\to\R$ is a fixed $C^2$ map with
\begin{equation*}
\psi(t)=1\text{ for }t\in [0,\tfrac14]\qquad \text{and}\qquad \psi(t)=0\text{ for }t\in [\tfrac12,1]\,.
\end{equation*}
Then $\|\gamma-\gamma_\delta\|_{H^2}=\delta\|\psi\|_{H^2}$, and as a consequence $\mathcal{E}(\gamma_\delta)\leq \mathcal{E}(\gamma)(1+C\delta)$ for some constant $C$ that 
depends on $\|\psi\|_{H^2}$.
Let $\alpha_\delta$ be the constant speed parametrization of the concatenation of a straight segment from $0$ to $\delta e_1$ and $\gamma_\delta$. From Lemma \ref{lem:StimaH2} it follows that $\|\alpha_\delta - \gamma_\delta\|_{H^2(0,1)} $ tends to zero as $\delta\to0$. Hence the thesis follows by taking $\tilde\gamma=\alpha_\delta$ for $\delta>0$ sufficiently small.
\end{proof}

We now come to the proof of the recovery sequence.
\begin{proof}[Proof of Proposition~\ref{recovery}]
Consider $\mathcal{N}=(G,\Gamma)\in\mathcal{C}_{\mathrm{Deg}}$
and let $H=H_0=\mathrm{Sing}(\mathcal{N})$.
Without loss of generality we can assume that the regular curves of $\mathcal{N}$ are parametrized with constant speed equal to their length.
For simplicity we can suppose that $H$ is connected and $\Gamma(H)$
is the origin of $\mathbb{R}^d$. In the case of many connected components it is sufficient to repeat the argument for each one of them. By assumption there exist
(real or virtual) tangents $\Tau_G=(\tau^{z,i})_{(z,i)\in \pi^{-1}(V_G)}$ of the graph $G$, such that $H$ is stratified--straight  
with strata
\begin{equation*}
\emptyset=H_q\subset H_{q-1}\subset\ldots\subset H_1\subset H_0=H
\end{equation*}
with $\mathbb{N}\ni q\leq N$, and such that the (real or virtual) tangents of $(H_i,\Sigma_i)$ coincide with the corresponding ones of $\Tau_G$. 

Fix $\eps>0$. Consider the family $\F$ of regular curves of $\N$ that have an endpoint in $H$. For every $\gamma^i\in \F$ we have that $\Gamma(\pi(z,i))=0$ whenever $p=\pi(z,i)\in H$ and moreover by Lemma~\ref{pezzidisegmenti} we can suppose that near $p$ the image of the curve $\gamma^i$ coincides with a straight segment of length $l\leq\eps$, up to adding a small error of order at most
$\eps$ to the energy. 

We consider the first stratum $H=H_0$ and up to rescaling we can suppose that $\Sigma_0(H_0)\subset B_{l^3}(0)$ where we recall that $0=\Gamma(H_0)$. We want to partially desingularize $H$, that is we replace in a neighbourhood of $0$ the completely singular subnetwork $(H_0,\Gamma)$ with the first stratum $(H_0,\Sigma_0)$. We have to make sure to connect each curve to the right junction and with the right angle.

Consider a curve $\gamma^i\in \F$, and suppose for instance that $p=\pi((0,i))\in V_H$ (the case when $\pi((1,i))\in V_H$ is completely analogous). Then there exists $a^{0,i}$ such that $\gamma^i|_{[0,a^{0,i}]}$ is a straight segment of length $l$. In particular $\dot\gamma^i(a^{0,i})$ is parallel to $\dot\gamma^i(0)=\tau^{0,i}$. We now consider the parallel lines with direction $\tau^{0,i}$ passing through the point $\Sigma_0((0,i))$ and through the point $\gamma^i(a^{0,i})$. Their distance is of order $l^3$. We can thus apply Lemma \ref{cambiodibinari} to modify the curve $\gamma^i$ on the interval $[0,a^{0,i}]$ in order to connect $\Sigma_0((0,i))$ to $\gamma^i(a^{0,i})$ with an energy of order $l^{3/2}\leq \eps^{3/2}$ and with outer tangent $\tau^{0,i}$. Finally we reparametrize the modified curve $\gamma^i$ on $[0,1]$ by constant speed equal to its length. By Remark \ref{rem:StimaH2Concatenation} it follows that the modified curve $\gamma^i$ is also close to the original one in $H^2$-norm.

We repeat this process for every curve in $\F$ and every endpoint in $V_H$. By construction the angle condition is still verified for the newly constructed network, but now the stratum $H_0$ is not 
completely singular anymore.
We then repeat the same process for each stratum until we obtain a regular network $\N_\eps$ whose energy is at most $\overline{\mathcal{E}}(\N)+C\eps$ and whose $H^2$ distance from $\mathcal{N}$ is at most $C\eps$.

Since we can do this for any $\eps>0$, choosing $\eps_n=\tfrac{1}{n}$ we obtain a family $\N_n$ of regular networks that approximate $\N$ and thus satisfies the thesis of the theorem.
\end{proof}

\begin{thm}\label{rilassato}
The functional $\overline{\mathcal{E}}$ is the 
 lower semicontinuous envelope
of the functional 
$\mathcal{E}$ with respect to the weak convergence in $H^2$, 
that is 
\begin{equation}
\overline{\mathcal{E}}(\mathcal{N}_{\infty})=\inf\left\lbrace \liminf_{n\to\infty}\mathcal{E}(\mathcal{N}_n)\;\vert\; 
\mathcal{N}_n\in\mathcal{C}_{\mathrm{Reg}}\,, \mathcal{N}_n
\overset{H^2}{\rightharpoonup}\mathcal{N}_\infty\right\rbrace \,.
\end{equation}
Moreover the relaxed functional $\overline{\mathcal{E}}$
admits a minimizer.
\end{thm}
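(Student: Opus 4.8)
The plan is to prove the two inequalities defining the relaxed functional separately, and then to deduce existence of minimizers by the direct method; the two Propositions~\ref{regolariconvergonoadeg} and~\ref{recovery} carry essentially all the analytic weight.

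First I would prove the $\liminf$ inequality: given $\mathcal{N}_n\in\mathcal{C}_{\mathrm{Reg}}$ with $\mathcal{N}_n\rightharpoonup\mathcal{N}_\infty$ weakly in $H^2$, the goal is $\liminf_n\mathcal{E}(\mathcal{N}_n)\geq\overline{\mathcal{E}}(\mathcal{N}_\infty)$. If the $\liminf$ is infinite there is nothing to do; otherwise I pass to a subsequence realizing it, along which $\mathcal{E}(\mathcal{N}_n)$ is bounded, and invoke Proposition~\ref{regolariconvergonoadeg}. Since weak $H^2$ convergence of the curves already yields their uniform convergence to $\Gamma_\infty|_{E_i}$, no translation is needed and the limit produced is exactly $\mathcal{N}_\infty$, which therefore lies in $\mathcal{C}_{\mathrm{Deg}}$; hence $\overline{\mathcal{E}}(\mathcal{N}_\infty)=\widetilde{\mathcal{E}}(\mathcal{N}_\infty)$ and I also gain strong $C^{1,\alpha}$ convergence. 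Then I would check lower semicontinuity of $\widetilde{\mathcal{E}}$ curve by curve using the reparametrized expression~\eqref{riparametrizzato}: the length terms $\beta\,\ell(\gamma^i_n)$ converge by $C^1$ convergence; for the bending term of a curve $\gamma^i$ with $\ell(\gamma^i_\infty)>0$ the denominators $\ell(\gamma^i_n)^3$ converge to a strictly positive limit and $\ddot\gamma^i_n\rightharpoonup\ddot\gamma^i_\infty$ in $L^2(0,1)$, so weak lower semicontinuity of the $L^2$-norm applies; for a curve with $\ell(\gamma^i_\infty)=0$ the limit contribution is zero by the convention extending $\mathcal{E}$ to singular curves, hence the inequality is trivially true for that index. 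Summing the finitely many curves and using superadditivity of $\liminf$ on nonnegative sequences concludes this half.

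Next I would produce the recovery sequence. If $\mathcal{N}_\infty\notin\mathcal{C}_{\mathrm{Deg}}$ then $\overline{\mathcal{E}}(\mathcal{N}_\infty)=+\infty$ and the inequality $\overline{\mathcal{E}}(\mathcal{N}_\infty)\geq\inf\{\liminf_n\mathcal{E}(\mathcal{N}_n)\}$ is automatic; if $\mathcal{N}_\infty\in\mathcal{C}_{\mathrm{Deg}}$, Proposition~\ref{recovery} gives a sequence of regular networks in $\mathcal{C}_{\mathrm{Reg}}$ converging to $\mathcal{N}_\infty$ in $H^2$ (in particular weakly) with $\mathcal{E}(\mathcal{N}_n)\to\overline{\mathcal{E}}(\mathcal{N}_\infty)$. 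Together with the previous step this gives the claimed identity for the lower semicontinuous envelope. For the existence statement I would apply the direct method: the relaxed functional, being a $\Gamma$-$\liminf$, is automatically sequentially lower semicontinuous for the weak $H^2$ convergence; a minimizing sequence has finite energy, hence lies in $\mathcal{C}_{\mathrm{Deg}}$ with equibounded energy, so by Proposition~\ref{regolariconvergonoadeg} it converges, up to a subsequence and a translation (which leaves $\overline{\mathcal{E}}$ unchanged), to some $\mathcal{N}_\infty\in\mathcal{C}_{\mathrm{Deg}}$, and lower semicontinuity forces $\mathcal{N}_\infty$ to be a minimizer.

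The only genuinely delicate point is the lower semicontinuity in the first step, and precisely the treatment of curves whose length collapses in the limit: there the correct statement is that such a curve contributes zero to $\overline{\mathcal{E}}(\mathcal{N}_\infty)$ by definition, so the inequality holds for free, while for the surviving curves one crucially uses that the denominator $\ell(\gamma^i_n)^3$ stays bounded away from zero, so that the ratio $|\ddot\gamma^i_n|^2/\ell(\gamma^i_n)^3$ is weakly lower semicontinuous. All the remaining work — the compactness of $\mathcal{C}_{\mathrm{Deg}}$ and the construction of approximating regular networks — has already been carried out in Propositions~\ref{regolariconvergonoadeg} and~\ref{recovery}.
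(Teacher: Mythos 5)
Your proposal is correct and rests on the same two pillars as the paper -- Proposition~\ref{regolariconvergonoadeg} for compactness and Proposition~\ref{recovery} for the recovery sequence -- but the lower-bound half is argued along a genuinely different route. The paper never proves a liminf inequality directly: it notes that $\overline{\mathcal{E}}\le\mathcal{E}$ on $\mathcal{C}_{\mathrm{Reg}}$, takes an arbitrary lower semicontinuous $\mathcal{F}\le\mathcal{E}$ on $\mathcal{C}_{\mathrm{Reg}}$ and uses the recovery sequence to conclude $\mathcal{F}\le\overline{\mathcal{E}}$ on all networks, so that $\overline{\mathcal{E}}$ is the greatest lower semicontinuous minorant; the semicontinuity of $\overline{\mathcal{E}}$ itself is only attributed to Proposition~\ref{regolariconvergonoadeg} and left implicit. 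You instead verify the liminf inequality by hand: compactness places the weak limit in $\mathcal{C}_{\mathrm{Deg}}$, and then you check semicontinuity curve by curve through the reparametrized expression~\eqref{riparametrizzato}, with lengths converging by $C^1$ convergence, the bending term weakly lower semicontinuous in $L^2$ once the denominators $\ell(\gamma^i_n)^3$ converge to a positive limit, and collapsing curves contributing zero by the convention defining $\widetilde{\mathcal{E}}$. What your route buys is a self-contained proof of exactly the semicontinuity statement the paper leaves to the reader, and the same computation is what the direct method needs; what the paper's abstract argument buys is brevity, at the price of hiding that semicontinuity step. Two details are worth keeping at the paper's level of rigour: since convergence of networks is defined up to reparametrization, along the subsequence with bounded energy you should pass to constant-speed parametrizations, which are $H^2$-bounded by \eqref{normafunz}--\eqref{normadersec} and subconverge to a constant-speed parametrization of the same limit curve, before invoking weak $L^2$ lower semicontinuity; and the assertion that the relaxed functional is ``automatically'' sequentially lower semicontinuous requires either a diagonal argument (legitimate here because weak $H^2$ convergence is metrizable on energy-bounded sets) or, more simply, a rerun of your curve-by-curve argument for sequences in $\mathcal{C}_{\mathrm{Deg}}$, which is precisely the setting in which Proposition~\ref{regolariconvergonoadeg} is stated. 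Neither point is a gap in substance; both are at the granularity the paper itself glosses over.
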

\begin{proof}
The existence of minimizers of the functional $\overline{\mathcal{E}}$
follows by a direct method in the Calculus of Variations
combining the lower semicontinuity of $\overline{\mathcal{E}}$ 
and the compactness, that we both gain thanks to Proposition~\ref{regolariconvergonoadeg}
and Proposition~\ref{recovery}.

By definition $\overline{\mathcal{E}}\leq \mathcal{E}$ in the class $\mathcal{C}_{\mathrm{Reg}}$.
Let $\mathcal{F}$ be a lower semincontinuous functional defined in
the class of $N$--networks such that 
$\mathcal{F}\leq \mathcal{E}$ in the class $\mathcal{C}_{\mathrm{Reg}}$.
Proposition~\ref{recovery} implies that $\overline{\mathcal{E}}(\mathcal{N})\geq \mathcal{F}(\mathcal{N})$ 
in  the class of $N$--networks
and this concludes also
the first part of the statement.
\end{proof}

\begin{rem}
The choice of the weak topology in the above relaxation result is the natural one because of the compactness result of Proposition~\ref{regolariconvergonoadeg}.
However we observe that, as is clear from Proposition~\ref{recovery}, a slightly finer result holds: while the liminf inequality $\overline{\mathcal{E}}(\mathcal{N}_{\infty})\leq  \liminf_{n\to\infty}\mathcal{E}(\mathcal{N}_n)$ holds for every sequence $\mathcal{N}_n$ converging \textit{weakly} to $\mathcal{N}_\infty$, the limsup inequality holds with the strong topology, namely, for every $\mathcal{N}_\infty$ there exists a sequence $\mathcal{N}_n$ converging \textit{strongly} to $\mathcal{N}_\infty$ such that $\overline{\mathcal{E}}(\mathcal{N}_\infty)\geq \limsup_{n\to\infty}\mathcal{E}(\mathcal{N}_n)$ (and therefore equality holds for the limit).
Recalling that the lower semicontinuous envelope of a functional $F$ coincides with the $\Gamma$-limit of the constant sequence $F_n=F$, we can here make a comparison with the difference between Mosco convergence and $\Gamma$-convergence.
\end{rem}

\section{Lower bounds on the Energy}

Now that we have shown that the relaxed Problem~\ref{problemarilassato}
admits a minimizers,  one may wonder if in some cases the minimizer is regular, that is if
Problem~\ref{problem} has a minimum.  We report here on a special case in which we get the desired result.

\begin{defn}\label{Thetanet}
We call Theta--network any planar network $(G,\Gamma)$ where $G$ is 
a $3$--graph composed of edges $E_1,E_2,E_2$ with the identifications
\begin{align*}
(0,1)\sim (0,2)\sim (0,3)\quad &\text{and}\quad (1,1)\sim (1,2)\sim (1,3)\,,
\end{align*}
with assigned directions $\{d^{0,i}\}_{i=1}^3$, $\{d^{1,i}\}_{i=1}^3$ such that at any junction the assigned vectors form three (equal) angles equal to $\tfrac23\pi$.
\end{defn}
\begin{figure}[h]
\begin{center}
\begin{tikzpicture}[scale=0.8]
\draw[color=black,scale=1,domain=-2.09: 2.09,
smooth,variable=\t,shift={(0,0)},rotate=0]plot({1.*sin(\t r)},
{1.*cos(\t r)}) ; 
\draw[color=black,scale=1,domain=-2.09: 2.09,
smooth,variable=\t,shift={(0,-1)},rotate=180]plot({1.*sin(\t r)},
{1.*cos(\t r)}) ; 
\draw
(-0.87,-0.5)--(0.87,-0.5);
\path[shift={(0,0)}] 
 (0,-0.5)[above] node{$E^2$}
 (0,1)[below] node{$E^1$}
 (0,-2)[above] node{$E^3$};
\end{tikzpicture}
\end{center}    
\caption{A representation of the graph $G$ of a Theta--network.}\label{Thetanetwork}
\end{figure}
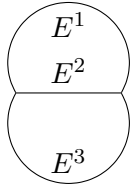
The minimization of the elastic energy among Theta--network
has been considered in~\cite{danovplu}.
It turns out that  in this class the minimizers
of the relaxed problem are regular networks (see~\cite[Theorem~4.10]{danovplu})
and hence Problem~\ref{problem} admits a minimiser. 

Although we have proven that 
in general sequences of regular networks with equibounded energy converge to a
limit network in $\mathcal{C}_{\mathrm{Deg}}$
one may think that for every choice of the topology of the graph $G$ and of the angle condition
minimizers are actually in $\mathcal{C}_{\mathrm{Reg}}$.
The following example shows that even for very simple topologies 
of $G$ this could not be the case and so it is hopeless to give always a positive answer to
Problem~\ref{problem}.

\begin{Example}[The minimizers of the relaxed problem are degenerate]\label{esempiodeg}
Consider the $3$--graph $G$ with assigned angles
composed of $E_1,E_2,E_2$
with $\pi(0,1)=\pi(1,1)=\pi(0,2)$ and $\pi(1,2)=\pi(0,3)=\pi(1,3)$
depicted in Figure~\ref{esempiomindeg}.
Then the length of one of the curves of the minimizers
is  zero.

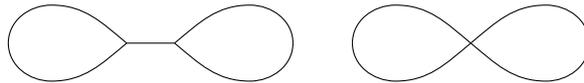
\begin{figure}[h]
\begin{center}
 \begin{tikzpicture}[scale=0.9, rotate=90]
 \draw
 (0,0)--(0,-0.7);
\draw
(0,1.73)to[out= 0,in=90, looseness=1] (0.56,1.1)
(0.56,1.1)to[out= -90,in=50, looseness=1] (0,0);
\draw
(0,1.73)to[out= 180,in=90, looseness=1] (-0.56,1.1)
(-0.56,1.1)to[out= -90,in=130, looseness=1] (0,0);
\draw[rotate=180, shift={(0,0.7)}]
(0,1.73)to[out= 0,in=90, looseness=1] (0.56,1.1)
(0.56,1.1)to[out= -90,in=50, looseness=1] (0,0);
\draw[rotate=180,  shift={(0,0.7)}]
(0,1.73)to[out= 180,in=90, looseness=1] (-0.56,1.1)
(-0.56,1.1)to[out= -90,in=130, looseness=1] (0,0);
\end{tikzpicture}\qquad
 \begin{tikzpicture}[scale=0.9, rotate=90]
\draw
(0,1.73)to[out= 0,in=90, looseness=1] (0.56,1.1)
(0.56,1.1)to[out= -90,in=50, looseness=1] (0,0);
\draw
(0,1.73)to[out= 180,in=90, looseness=1] (-0.56,1.1)
(-0.56,1.1)to[out= -90,in=130, looseness=1] (0,0);
\draw[rotate=180]
(0,1.73)to[out= 0,in=90, looseness=1] (0.56,1.1)
(0.56,1.1)to[out= -90,in=50, looseness=1] (0,0);
\draw[rotate=180]
(0,1.73)to[out= 180,in=90, looseness=1] (-0.56,1.1)
(-0.56,1.1)to[out= -90,in=130, looseness=1] (0,0);
\end{tikzpicture}
\end{center}
\caption{On the left a representation of the graph $G$. On the right a  
possible minimizer $\mathcal{N}_{\min}=(G,\Gamma_{\min})$.}\label{esempiomindeg}
\end{figure}
\end{Example}

We have just seen in Example~\ref{esempiodeg} that in some situations
the minimizers of Problem~\ref{problem} are degenerate networks. 
We want to understand  under which conditions minimizers have 
at least some curves with positive length and do not degenerate to a point.
To be more precise, consider a graph $G$ with assigned angles.
We recall that $\mathcal{D}$ is the set of 
the assigned directions $d^{i,z}$ (see Definition~\ref{defTheta}).
We define the minimization problem
\begin{equation}\label{infalvariarediD}
I_{\mathcal{D}}:=\min\left\lbrace\overline{\mathcal{E}}(\mathcal{N})
\;\vert\: \mathcal{N}=(G,\Gamma)\;\text{is a network}\right\rbrace\,,
\end{equation}
We are interested in conditions on the set $\mathcal{D}$ that
guarantee that $I_{\mathcal{D}}$ is strictly positive.

\begin{lem}\label{2c}
Let $\gamma:[0,\ell(\gamma)]\to\mathbb{R}^2$ be a regular, continuous, piecewise $H^2$ curve
parametrized by arclength
such that $\gamma(0)=0$, 
$\dot{\gamma}(0)=(\cos\theta,\sin\theta)$ for some $\theta\in(-\pi,\pi]$,
and $\dot{\gamma}(\ell(\gamma))=(1,0)$.
Then 
\begin{equation*}
\int_{\gamma}\vert \vec{k}\vert\,\rm{d}s\geq |\theta|\,,
\end{equation*}
and the equality holds if $\gamma$ is convex, i.e., if $\langle \vec{k}, \nu \rangle$ remains non negative (or non positive) along the curve, where $\nu$ is the counterclockwise rotation of $\tfrac\pi2$ of the tangent vector $\tau$ along $\gamma$.\\
Moreover suppose that for every $i\in\{1,\ldots,N\}$
the curve $\gamma^i$ is regular, continuous and of class $H^2$ and suppose that
there exists $\alpha>0$ such that 
\begin{equation*}
\sum_{i=1}^N\int_{\gamma^i}\vert \vec{k}^i \vert\, {\rm{d}}s\geq \alpha\,,
\end{equation*}
then $\mathcal{E}\left(\cup_{i=1}^N\gamma^i\right)\geq 2\alpha$. 
\end{lem}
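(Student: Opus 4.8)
The plan is to prove the two assertions in sequence: the first is a ``total absolute curvature controls the rotation of the unit tangent'' statement, which I would obtain from the turning angle of $\gamma$; the second then follows by inserting the first into the Cauchy--Schwarz and Young inequalities applied one curve at a time.

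For the first assertion, set $\tau:=\dot\gamma$, which is a continuous unit vector field since $\gamma$ is regular and arclength parametrized, and lift it to a continuous turning angle $\phi:[0,\ell(\gamma)]\to\R$ with $\tau(s)=(\cos\phi(s),\sin\phi(s))$, normalized so that $\phi(0)=\theta$. On each $H^2$ piece $\phi$ is absolutely continuous with $\ddot\gamma=\phi'\,\nu$, hence $|\phi'|=|\vec{k}|$ almost everywhere, and therefore
\[
\int_\gamma|\vec{k}|\,\mathrm{d}s=\int_0^{\ell(\gamma)}|\phi'(s)|\,\mathrm{d}s\ \ge\ \left|\int_0^{\ell(\gamma)}\phi'(s)\,\mathrm{d}s\right|=\bigl|\phi(\ell(\gamma))-\theta\bigr|.
\]
Since $\dot\gamma(\ell(\gamma))=(1,0)$ we have $\phi(\ell(\gamma))=2\pi m$ for some $m\in\mathbb{Z}$; as $|\theta|\le\pi$, for $m=0$ the right-hand side equals $|\theta|$, while for $m\neq 0$ it is at least $2\pi|m|-|\theta|\ge 2\pi-|\theta|\ge|\theta|$. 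This gives $\int_\gamma|\vec{k}|\,\mathrm{d}s\ge|\theta|$. For the equality claim I would note that the only inequality used is $\int|\phi'|\ge\bigl|\int\phi'\bigr|$, which is an equality exactly when $\phi'$ has constant sign, i.e.\ when $\langle\vec{k},\nu\rangle$ does not change sign along $\gamma$ (convexity); in that case $\int_\gamma|\vec{k}|\,\mathrm{d}s$ equals the absolute value of the net rotation, which is $|\theta|$ in the generic case of no extra full turns, as realized for instance by a circular arc.

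For the second assertion, fix $i$ and apply Cauchy--Schwarz on $\gamma^i$ (parametrized by arclength):
\[
\int_{\gamma^i}|\vec{k}^i|\,\mathrm{d}s\le\left(\int_{\gamma^i}|\vec{k}^i|^2\,\mathrm{d}s\right)^{1/2}\ell(\gamma^i)^{1/2},
\]
followed by Young's inequality $xy\le\tfrac12(x^2+y^2)$ with $x=\bigl(\int_{\gamma^i}|\vec{k}^i|^2\,\mathrm{d}s\bigr)^{1/2}$ and $y=\ell(\gamma^i)^{1/2}$, giving $\int_{\gamma^i}|\vec{k}^i|\,\mathrm{d}s\le\tfrac12\bigl(\int_{\gamma^i}|\vec{k}^i|^2\,\mathrm{d}s+\ell(\gamma^i)\bigr)$. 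Summing over $i\in\{1,\dots,N\}$ and using the hypothesis,
\[
\alpha\le\sum_{i=1}^N\int_{\gamma^i}|\vec{k}^i|\,\mathrm{d}s\le\frac12\sum_{i=1}^N\left(\int_{\gamma^i}|\vec{k}^i|^2\,\mathrm{d}s+\ell(\gamma^i)\right)=\frac12\,\mathcal{E}\Bigl(\bigcup_{i=1}^N\gamma^i\Bigr),
\]
whence $\mathcal{E}(\bigcup_{i=1}^N\gamma^i)\ge 2\alpha$, recalling $\mathcal{E}=\mathcal{E}_{1,1}$.

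I expect the only genuinely delicate point to be the turning-number bookkeeping in the first step: one must ensure the lift $\phi$ is globally continuous, so that $\int_0^{\ell(\gamma)}\phi'=\phi(\ell(\gamma))-\phi(0)$ really is the net rotation, and that the residual ambiguity $\phi(\ell(\gamma))\in 2\pi\mathbb{Z}$ cannot be exploited to undercut $|\theta|$ — which is exactly where the hypothesis $|\theta|\le\pi$ enters. If one literally allowed genuine corners in the ``piecewise $H^2$'' curve, the endpoint tangents alone would no longer control $\int_\gamma|\vec{k}|\,\mathrm{d}s$, so I would implicitly assume (as holds in the intended application) that the tangent of $\gamma$ is continuous; everything else is then routine.
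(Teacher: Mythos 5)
Your proposal is correct. For the first inequality you take essentially the paper's route — the total curvature dominates the rotation of the unit tangent — but you make explicit the continuous lift $\phi$ of $\tau$ and the winding bookkeeping $\phi(\ell(\gamma))\in 2\pi\mathbb{Z}$, $|2\pi m-\theta|\ge|\theta|$ for $|\theta|\le\pi$, which the paper compresses into the single sentence "the angle spanned by the unit tangent vector and the horizontal axis is greater or equal than $|\theta|$"; your version is the rigorous form of that step, and your caveats (continuity of the tangent despite the "piecewise $H^2$" wording, and the "no extra full turns" proviso in the equality case) are exactly the implicit assumptions the paper also makes. For the second assertion you argue curve by curve: Cauchy--Schwarz gives $\int_{\gamma^i}|\vec{k}^i|\le(\int_{\gamma^i}|\vec{k}^i|^2)^{1/2}\ell(\gamma^i)^{1/2}$ and Young's inequality turns this into $\int_{\gamma^i}|\vec{k}^i|\le\tfrac12(\int_{\gamma^i}|\vec{k}^i|^2+\ell(\gamma^i))$, so summing gives $\alpha\le\tfrac12\,\mathcal{E}$ directly. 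The paper instead first applies H\"older on each curve, then a discrete Cauchy--Schwarz over the sum to reach $\mathcal{E}\ge\bigl(\sum_i\int_{\gamma^i}|\vec{k}^i|\bigr)^2/\sum_i\ell(\gamma^i)+\sum_i\ell(\gamma^i)\ge\alpha^2/L+L\ge 2\alpha$. The two arguments use the same ingredients and give the same constant; yours is slightly shorter and avoids the aggregated quotient, while the paper's intermediate bound $\alpha^2/L+L$ retains the dependence on the total length $L$, which is the form reused in the subsequent lemma (the estimate $\mathcal{E}(\gamma)\ge c(\mathcal{D})^2/\ell+\ell$). Either way the proof is complete.
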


\begin{proof}
Denote by $\tau$ the unit tangent vector to the curve $\gamma$.
Since the curve $\gamma$ is parametrized by arclength, we have that 
$\dot{\gamma}=\tau$ and $\ddot{\gamma}=\dot{\tau}=\vec{k}$.
By hypothesis the angle spanned by the unit  tangent vector  and the horizontal axis is greater or equal than  $|\theta|$, 
hence
\begin{equation*}
\int_{\gamma}\vert \vec{k}(s)\vert\,\rm{d}s=\int_0^{\ell(\gamma)}\vert \dot{\tau}(s)\vert \,\rm{d}s \geq |\theta|\,.
\end{equation*}
By writing $\tau=(\cos \varphi(s), \sin \varphi (s))$ for some function $\varphi(s) \in (-\pi,\pi]$, as $|\dot\tau|=|\dot\varphi|=|\langle \vec{k}, \nu \rangle|$ we see that equality holds if $\langle \vec{k}, \nu \rangle$ has a sign.

We pass now to prove the second part of the statement.
Using Cauchy--Schwarz inequality we get 
\begin{equation}\label{ineqA}
\left(\sum_{i=1}^N\int_{\gamma^i}\vert \vec{k}^i\vert\,\rm{d}s\right)^2
=\left(\sum_{i=1}^N
\frac{\int_{\gamma^i}\vert \vec{k}^i\vert\,\rm{d}s}{\sqrt{\ell(\gamma^i)}}
\sqrt{\ell(\gamma^i)}\right)^2
\leq 
\left(\sum_{i=1}^N\frac{(\int_{\gamma^i}\vert\vec{k}^i \vert\, {\rm{d}}s)^2}{\ell(\gamma^i)}\right) 
\left(\sum_{i=1}^N\ell(\gamma^i)\right)\,.
\end{equation}
Moreover by H\"{o}lder inequality 
\begin{equation*}
\left(\int_{\gamma^i}\vert \vec{k}^i \vert\, {\rm{d}}s\right)^2\leq 
\ell(\gamma^i)\int_{\gamma^i} \vert \vec{k}^i\vert^2 \, {\rm{d}}s\,,
\end{equation*}
that combined with~\eqref{ineqA} gives
\begin{align*}
\mathcal{E}\left(\cup_{i=1}^N\gamma^i\right)
&=\sum_{i=1}^N\left(\int_{\gamma^i}\vert \vec{k}^i\vert^2\,\rm{d}s
+\ell(\gamma^i)\right)
\geq 
\sum_{i=1}^N\left(
\frac{(\int_{\gamma^i}\vert \vec{k}_i \vert\, {\rm{d}}s)^2}{\ell(\gamma^i)}+\ell(\gamma^i)\right)\\
&\geq  \frac{
\left(\sum_{i=1}^N
\int_{\gamma^i}\vert \vec{k}_i\vert\,\rm{d}s\right)^2}
{\sum_{i=1}^N\ell(\gamma^i)}
+\sum_{i=1}^N\ell(\gamma^i)
\geq\frac{\alpha^2}{\sum_{i=1}^N\ell(\gamma^i)}
+\sum_{i=1}^N\ell(\gamma^i)
\geq 2\alpha\,.
\end{align*}
\end{proof}

\begin{lem}
We have that 
$I_{\mathcal{D}}$ defined in~\eqref{infalvariarediD} equals zero if and only if the graph 
$G$ is stratified-straight.

Moreover, suppose that one of the following conditions holds:
\begin{itemize}
\item[i)]  at every
vertex $p$ the convex hull of the outer tangents contains the origin in
its interior, that is
$0\in Int\left(co\left(\{d_{z,i}\}_{(z,i)\in \pi^{-1}(p)}\right)\right)$,
\item[ii)] two edges $E_i$ and $E_j$ have both vertices in common
and, if $\pi(z_i,i)=\pi(z_j,j)$ for some $z_i,z_j$, then $d^{z_i,i}$ and $d^{z_j,j}$ are linearly independent,
\end{itemize}
then $I_{\mathcal{D}}>0$.
\end{lem}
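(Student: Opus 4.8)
The plan is to treat the equivalence and the two sufficient conditions separately, using throughout that $\overline{\mathcal{E}}$ is the relaxation of $\mathcal{E}$ over $\mathcal{C}_{\mathrm{Reg}}$ (Theorem~\ref{rilassato}), so that $I_{\mathcal{D}}=\inf\{\mathcal{E}(\mathcal{N}):\mathcal{N}\in\mathcal{C}_{\mathrm{Reg}}\}$, and that $\overline{\mathcal{E}}$ admits a minimizer, necessarily lying in $\mathcal{C}_{\mathrm{Deg}}$. For the equivalence: if $G$ is stratified-straight, I would consider the completely collapsed network $\mathcal{N}_0=(G,\Gamma_0)$ with $\Gamma_0$ constant. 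Then $\mathrm{Sing}(\mathcal{N}_0)=G$ is stratified-straight, and the map $\Sigma_0$ witnessing this supplies a compatible family of tangents for $\mathcal{N}_0$ — namely the outer tangents of its straight segments (which satisfy $\tau^{0,i}=-\tau^{1,i}$, hence may serve as virtual tangents of $\mathcal{N}_0$) together with its virtual tangents — for which the angle condition~\ref{angolifissatiRd} holds; therefore $\mathcal{N}_0\in\mathcal{C}_{\mathrm{Deg}}$ and $\overline{\mathcal{E}}(\mathcal{N}_0)=\widetilde{\mathcal{E}}(\mathcal{N}_0)=0$, so $I_{\mathcal{D}}=0$. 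Conversely, if $I_{\mathcal{D}}=0$ then a minimizer $\mathcal{N}_{\min}\in\mathcal{C}_{\mathrm{Deg}}$ satisfies $\alpha\int|\vec{k}|^2+\beta L(\mathcal{N}_{\min})=0$, which forces $L(\mathcal{N}_{\min})=0$; hence all its curves are singular, $\mathrm{Sing}(\mathcal{N}_{\min})=G$, and by Definition~\ref{degnetwRd} the graph $G$ is stratified-straight.

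For condition (i) I would argue directly on regular networks. Set $r_0:=\min_{p\in V_G}\mathrm{dist}\bigl(0,\partial\,\mathrm{co}\{d^{z,i}:(z,i)\in\pi^{-1}(p)\}\bigr)$, which is strictly positive by (i) and finiteness of $V_G$; applying at each vertex the rotation $\mathrm{R}_p$ of the angle condition, also $B_{r_0}(0)\subseteq\mathrm{co}\{\tau^{z,i}:(z,i)\in\pi^{-1}(p)\}$. Fix $\mathcal{N}=(G,\Gamma)\in\mathcal{C}_{\mathrm{Reg}}$ and any $\xi\in\mathbb{S}^{d-1}$, and let $p^*$ maximize $\langle\Gamma(p),\xi\rangle=:M$ over $V_G$. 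By the inclusion above, some edge $E_{i_0}$ incident to $p^*$ has $\langle\tau^{z_0,i_0},\xi\rangle\ge r_0$. Parametrizing $\gamma^{i_0}$ by arclength from the endpoint lying at $\Gamma(p^*)$, with length $L_0$, the function $h(t):=\langle\gamma^{i_0}(t),\xi\rangle$ has $h(0)=M$, $h'(0)=\langle\tau^{z_0,i_0},\xi\rangle\ge r_0>0$ and $h(L_0)\le M$ (its value at the other endpoint, a vertex); hence $h$ attains an interior maximum at some $t^*$, where the unit tangent is orthogonal to $\xi$. Since the unit tangent at $0$ makes an angle $\le\arccos(r_0)<\tfrac{\pi}{2}$ with $\xi$, the spherical distance on $\mathbb{S}^{d-1}$ between the tangents at $0$ and at $t^*$ is at least $\arcsin(r_0)$, so $\int_{\gamma^{i_0}}|\vec{k}^{i_0}|\,\mathrm{d}s\ge\arcsin(r_0)=:\alpha_0>0$ (the integral dominates the length of the Gauss image of the tangent, hence that geodesic distance). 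Then $\sum_i\int_{\gamma^i}|\vec{k}^i|\,\mathrm{d}s\ge\alpha_0$, and the second assertion of Lemma~\ref{2c} — whose proof uses only the Cauchy--Schwarz and Hölder inequalities and hence is valid in $\mathbb{R}^d$ — gives $\mathcal{E}(\mathcal{N})\ge 2\alpha_0$. As $\alpha_0$ depends only on $\mathcal{D}$, taking the infimum over $\mathcal{C}_{\mathrm{Reg}}$ yields $I_{\mathcal{D}}\ge 2\alpha_0>0$.

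For condition (ii) it suffices, by the equivalence, to prove that $G$ is not stratified-straight. Suppose it were, with strata $\emptyset=H_q\subset\cdots\subset H_0=G$ and maps $\Sigma_j$ as in Definition~\ref{def:stratstraightRd}, and let $E_i,E_j$ be the two edges sharing both endpoints $p,q$ with linearly independent directions at $p$. Since $E_i\in H_0$ and $E_i\notin H_q=\emptyset$, there is a last stratum $H_{j_i}$ still containing $E_i$; there $E_i$ is a regular curve of $(H_{j_i},\Sigma_{j_i})$, so $\Sigma_{j_i}|_{E_i}$ is a non-degenerate straight segment and in particular $\Sigma_{j_i}(p)\ne\Sigma_{j_i}(q)$. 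Defining $j_j$ analogously and assuming without loss of generality $j_i\le j_j$, we get $E_j\in H_{j_j}\subseteq H_{j_i}$. If $E_j$ were singular under $\Sigma_{j_i}$ then $\Sigma_{j_i}(p)=\Sigma_{j_i}(q)$, a contradiction; if $E_j$ is regular under $\Sigma_{j_i}$ then $\Sigma_{j_i}|_{E_j}$ is a non-degenerate segment with the same endpoints as $\Sigma_{j_i}|_{E_i}$, hence collinear with it, so the outer tangents of $E_i$ and of $E_j$ at $p$ are parallel; applying $\mathrm{R}_p^{-1}$ from the angle condition, $d^{z_i,i}$ and $d^{z_j,j}$ are parallel, contradicting (ii). Hence $G$ is not stratified-straight, and therefore $I_{\mathcal{D}}>0$.

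The delicate points I anticipate are: checking carefully that the collapsed network $\mathcal{N}_0$ indeed satisfies Definition~\ref{degnetwRd} — one must reuse the tangents of the auxiliary map $\Sigma_0$ as the virtual tangents of $\mathcal{N}_0$ — and, for condition (i), formulating the turning estimate correctly in arbitrary dimension, together with the easy but necessary remark that the part of Lemma~\ref{2c} used is dimension-free. Condition (ii) is comparatively mechanical once the ``graduation stratum'' $H_{j_i}$ has been identified.
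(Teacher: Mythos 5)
Your proof is correct, but it departs from the paper's route at several points, and mostly to its advantage. For the equivalence, the paper works through regular networks: in the ``if'' direction it rescales the first stratum and combines recovery sequences (Proposition~\ref{recovery}) with a diagonal argument to produce regular networks of vanishing energy, and in the ``only if'' direction it takes a minimizing sequence with $L(\mathcal{N}_n)\to 0$ and invokes the compactness Proposition~\ref{regolariconvergonoadeg}; you instead exhibit directly the fully collapsed network, reusing the tangents of $\Sigma_0$ as virtual tangents (exactly the point you flag, and it does work, since the outer tangents of a straight segment are antipodal), and conversely use the minimizer of $\overline{\mathcal{E}}$ from Theorem~\ref{rilassato}, whose vanishing energy forces $L=0$, hence $\mathrm{Sing}(\mathcal{N}_{\min})=G$ stratified-straight. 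This is shorter and legitimate because $I_{\mathcal{D}}$ is defined through $\overline{\mathcal{E}}$; the identification $I_{\mathcal{D}}=\inf_{\mathcal{C}_{\mathrm{Reg}}}\mathcal{E}$ that you use for (i) is indeed what the relaxation and recovery results provide. For (i) your argument is essentially the paper's (extremal vertex, an outer tangent with a uniformly positive component in the supporting direction, a turning estimate, then the second part of Lemma~\ref{2c}); you add value by deriving the uniform constant $r_0$ from the rotation-invariant inradius of the convex hull, where the paper only asserts a constant $c(\mathcal{D})$, and by phrasing the turning estimate so that it holds in $\mathbb{R}^d$, correctly observing that the part of Lemma~\ref{2c} you need is dimension-free. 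For (ii) the routes genuinely differ: the paper applies Lemma~\ref{2c} to the two curves sharing both endpoints and obtains the quantitative bound $\mathcal{E}\geq 2\delta$, whereas you argue combinatorially that under (ii) the graph cannot be stratified-straight (last stratum containing $E_i$, both segments forced to coincide with the segment between the images of the two common vertices, contradiction with linear independence via the common rotation $\mathrm{R}_p$) and then invoke the equivalence. Your version buys a purely structural proof with no curvature estimate; the paper's buys an explicit lower bound for $I_{\mathcal{D}}$ in terms of the angle $\delta$, which your reduction does not give, though the statement only asks for positivity.
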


\begin{proof}
Let us prove the first claim. Suppose first that $I_{\mathcal{D}}=0$. Then there exists a sequence of networks
$\{\mathcal{N}_n\}$ with 
$\widetilde{\mathcal{E}}(\mathcal{N}_n)\to 0$, and in particular $L(\mathcal{N}_n)\to 0$. This means that the networks $\mathcal{N}_n$ are converging strongly, up to translation, to a constant network $\mathcal{N}_\infty$, where every curve is collapsed to a point. In particular, recalling Definition \ref{def:singular}, $\Sing(\mathcal{N}_\infty)=G$. By Proposition~\ref{regolariconvergonoadeg} we obtain that $\mathcal{N}_\infty$ belongs to $\mathcal{C}_{Deg}$. Recalling Definition \ref{degnetwRd}, this implies that $G=\Sing(\mathcal{N}_\infty)$ is stratified straight. This proves one implication.
On the other hand if $G$ is stratified straight, there exists a degenerate immersion
$\Sigma_0$ whose regular curves are straight segments. By Proposition~\ref{recovery}
there exists a sequence $\{\mathcal{N}_n\}$ of regular networks such that
$\Gamma_n \to \Sigma_0$ in $H^2$. 
Analogously there exists recovery sequences for every 
rescaling $\frac{1}{\lambda}\Sigma_0$
of $\Sigma_0$ with $\lambda>>1$. By a diagonal argument one can find
a sequence of regular networks whose energy converges to zero. This proves the opposite implication.
 
Consider any regular network $\N=(G,\Gamma)$ with $G$ graph with assigned angles
encoded by $\mathcal{D}$. 

Suppose now that i) holds.
Since the number of vertices is finite there exists at least one junction $p\in
\Gamma(V_G)$ that lies on the boundary of the convex hull of
$\Gamma(V_G)$, and thus that lies on $\partial H$ where $H$ is a closed
half plane containing $\Gamma(V_G)$. Then at least one of the curves
arriving at $p$, call it $\gamma$, must have an outer tangent that goes
out of $H$ and whose smallest angle with $\partial H$ is at least
$c(\mathcal{D})$ where $c(\mathcal{D})>0$ depends only on $\mathcal{D}$ and it is
strictly positive by assumption. 
Then, by Lemma~\ref{2c}, we have
$\int_{\gamma}\vert \vec{k}\vert\,\rm{d}s\geq c(\mathcal{D})$
and thus
$\mathcal{E}(\gamma)\geq 
\frac{\left(\int_{\gamma}\vert \vec{k}\vert\,\rm{d}s\right)^2}{\ell(\gamma^i)}
+\ell(\gamma^i)\geq \frac{c(\mathcal{D})^2}{\ell(\gamma^i)}
+\ell(\gamma^i)\geq 2c(\mathcal{D})$.

Suppose now that ii) holds. We then have that the least positive angle $\delta \in [0,2\pi)$ between $d^{z_i,i}$ and $d^{z_j,j}$ is actually strictly positive, i.e., $\delta>0$.
Consider the two maps $\gamma^i:=\Gamma_{\vert E_i}$ and $\gamma^j:=\Gamma_{\vert E_j}$, with $\Gamma$ a competitor for $I_{\mathcal{D}}$.
Then by Lemma~\ref{2c} we have that 
$\int_{\gamma^i}\vert \vec{k}^i\vert\,\mathrm{d}s+\int_{\gamma^j}\vert \vec{k}^j\vert\,\mathrm{d}s\geq\delta$,
and thus $\mathcal{E}(\gamma^i)+\mathcal{E}(\gamma^j)\geq 2\delta$.
\end{proof}


As another application of Lemma~\ref{2c} 
we improve the lower bound (cf~\cite[Lemma~2.5.]{danovplu})
of the elastic energy of a Theta--network
that we introduced above in Definition~\ref{Thetanetwork}.

\begin{lem}
Let $\Theta$ be a  Theta--network. Then 
\begin{equation*}
\mathcal{E}(\Theta)\geq \frac{16\pi}{3}\,.
\end{equation*}
\end{lem}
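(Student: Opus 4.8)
The plan is to reduce the statement to the lower bound
\begin{equation*}
\sum_{i=1}^{3}\int_{\gamma^i}|\vec{k}^i|\,\mathrm{d}s\ \geq\ \frac{8\pi}{3}
\end{equation*}
and then to invoke the second part of Lemma~\ref{2c} with $\alpha=\tfrac{8\pi}{3}$, which gives $\mathcal{E}(\Theta)\geq 2\alpha=\tfrac{16\pi}{3}$. Here $\gamma^1,\gamma^2,\gamma^3$ are the three (regular) curves of $\Theta$, parametrized by arclength so that $\gamma^i(0)=A$ and $\gamma^i(\ell(\gamma^i))=B$, where $A,B\in\mathbb{R}^2$ are the images under $\Gamma$ of the two junctions; since $\Theta$ fulfills the angle condition, the outer tangents $\tau^{0,1},\tau^{0,2},\tau^{0,3}$ form mutual angles $\tfrac{2\pi}{3}$, and likewise $\tau^{1,1},\tau^{1,2},\tau^{1,3}$. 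If $A=B$, each $\gamma^i$ is a closed, continuous, piecewise $H^2$ curve with a single corner at $A$ whose exterior angle has modulus $\leq\pi$, so \eqref{eq:GaussBonnetImmerso} already yields $\int_{\gamma^i}|\vec{k}^i|\,\mathrm{d}s\geq 2\pi-\pi=\pi$ for each $i$, hence the sum is $\geq 3\pi\geq\tfrac{8\pi}{3}$; so from now on I assume $A\neq B$ and put $v:=(B-A)/|B-A|\in\mathbb{S}^1$.

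For each $i$ I would form the closed, continuous, piecewise $H^2$ curve $\sigma_i$ obtained by concatenating $\gamma^i$ with the straight segment from $B$ to $A$. This $\sigma_i$ is regular on each of its two pieces and has two break points, at $B$ and at $A$; it may be immersed rather than embedded (the curve $\gamma^i$ could cross the chord $BA$ or itself), which is precisely why one appeals to the refined Gauss--Bonnet inequality \eqref{eq:GaussBonnetImmerso}, valid for immersed curves. The forward tangent of $\gamma^i$ at $B$ is parallel to $-\tau^{1,i}$ while the segment leaves $B$ in direction $-v$, so the exterior angle of $\sigma_i$ at $B$ has modulus $\pi-\angle(v,-\tau^{1,i})$; similarly the segment reaches $A$ in direction $-v$ and $\gamma^i$ leaves $A$ in direction $\tau^{0,i}$, so the exterior angle of $\sigma_i$ at $A$ has modulus $\pi-\angle(v,\tau^{0,i})$, where $\angle(\cdot,\cdot)\in[0,\pi]$ is the unoriented angle. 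As the segment carries no curvature, \eqref{eq:GaussBonnetImmerso} applied to $\sigma_i$ gives
\begin{equation*}
\int_{\gamma^i}|\vec{k}^i|\,\mathrm{d}s=\int_{\sigma_i}|\vec{k}_{\sigma_i}|\,\mathrm{d}s\ \geq\ 2\pi-\bigl(\pi-\angle(v,-\tau^{1,i})\bigr)-\bigl(\pi-\angle(v,\tau^{0,i})\bigr)=\angle(v,\tau^{0,i})+\angle(v,-\tau^{1,i}).
\end{equation*}

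It then remains to sum over $i$ and to use the elementary fact that, for any $w\in\mathbb{S}^1$ and any $u_1,u_2,u_3\in\mathbb{S}^1$ with $\angle(u_j,u_k)=\tfrac{2\pi}{3}$ for $j\neq k$, one has $\sum_{j=1}^{3}\angle(w,u_j)\geq\tfrac{4\pi}{3}$: expressing each $\angle(w,u_j)$ through the single angular variable $\theta$ between $w$ and $u_1$, the sum is a $\tfrac{2\pi}{3}$-periodic, piecewise linear function of $\theta$ whose values at the kinks are $\tfrac{4\pi}{3}$ (at $w=u_j$) and $\tfrac{5\pi}{3}$ (at $w=-u_j$), so its minimum is $\tfrac{4\pi}{3}$. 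Applying this with $w=v$ to the triple $\{\tau^{0,i}\}_i$ and to the triple $\{-\tau^{1,i}\}_i$ (which also has mutual angles $\tfrac{2\pi}{3}$) gives $\sum_i\angle(v,\tau^{0,i})\geq\tfrac{4\pi}{3}$ and $\sum_i\angle(v,-\tau^{1,i})\geq\tfrac{4\pi}{3}$; summing the displayed inequality over $i$ yields $\sum_{i=1}^{3}\int_{\gamma^i}|\vec{k}^i|\,\mathrm{d}s\geq\tfrac{8\pi}{3}$, and Lemma~\ref{2c} concludes.

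The argument has no genuinely hard step: the two ideas that drive it are (i) closing up each $\gamma^i$ with the chord $BA$, which turns the total curvature of $\gamma^i$ into a Gauss--Bonnet quantity for a closed curve and brings the directions $\tau^{0,i},-\tau^{1,i},v$ into play, and (ii) the elementary trigonometric estimate that three unit vectors at mutual angles $\tfrac{2\pi}{3}$ are, in the above angular-sum sense, at least $\tfrac{4\pi}{3}$ from any fixed direction. The only point that calls for a little care is that the curves $\sigma_i$ need not be embedded, so one must use the immersed version \eqref{eq:GaussBonnetImmerso} of Gauss--Bonnet, which merely costs the passage to absolute values of the exterior angles — harmless here since these are bounded in modulus by $\pi$.
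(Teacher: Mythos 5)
Your argument is correct, and it takes a genuinely different route from the paper's. The paper first reduces to a \emph{minimizing} Theta--network (existence from \cite{danovplu}), uses injectivity and analyticity of its curves plus a degree argument via Hopf's Umlaufsatz to produce on each curve a point $t^i$ where the unit tangent equals $(1,0)$, the direction of the segment joining the two junctions, and then applies Lemma~\ref{2c} twice: the first part bounds the total curvature of each of the six half-arcs by the turning from $\tau^{0,i}$ (resp.\ $-\tau^{1,i}$) to $(1,0)$, giving $\tfrac43\pi$ for each family of three half-arcs, and the second part converts each $\tfrac43\pi$ into energy $\tfrac83\pi$. You bypass the minimizer reduction, the analyticity, and the Umlaufsatz step entirely: closing each $\gamma^i$ with the chord $BA$ and invoking the immersed Gauss--Bonnet inequality \eqref{eq:GaussBonnetImmerso} yields directly $\int_{\gamma^i}|\vec{k}^i|\,\mathrm{d}s\geq \angle(v,\tau^{0,i})+\angle(v,-\tau^{1,i})$, and your elementary estimate $\sum_j\angle(w,u_j)\geq\tfrac43\pi$ for three unit vectors at mutual angles $\tfrac23\pi$ (which is in substance the same angle arithmetic as the paper's computation $|\alpha^1|+|\alpha^2|+|\alpha^3|=\alpha^1+\tfrac43\pi$) gives the total-curvature bound $\tfrac83\pi$, after which a single application of the second part of Lemma~\ref{2c} produces the same constant $\tfrac{16\pi}{3}$. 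Your exterior-angle bookkeeping at $A$ and $B$ is right, and the possibly self-intersecting closed curves are exactly what \eqref{eq:GaussBonnetImmerso} (Remark~\ref{rem:GaussBonnet}) is designed for. What your approach buys: it works for an arbitrary regular Theta--network satisfying the angle condition, with no appeal to existence or regularity of minimizers, and it treats the degenerate case of coincident junctions explicitly (a case the paper avoids because minimizing curves are injective). What the paper's approach buys: it stays within the tools of that section (Lemma~\ref{2c} applied to arcs of the minimizer) rather than re-invoking the Fenchel-type inequality for auxiliary closed-up curves, at the price of importing the existence and smoothness theory for minimizing Theta--networks.
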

\begin{proof}
In order to prove the required lower bound we can assume that the curves 
$\gamma^1,\gamma^2,\gamma^3$ realize a minimizing Theta-network, 
which exists by~\cite[Theorem 4.10]{danovplu}. 
Then the curves $\gamma^i$ are injective and  real analytic 
by~\cite[Proposition 4.11]{danovplu} and 
by Proposition~\ref{prop:CriticalPoints}.
Without loss of generality we let 
\begin{equation*}
\gamma^1(0)=\gamma^2(0)=\gamma^3(0)=(0,0)\quad\text{ and }\quad 
\gamma^1(1)=\gamma^2(1)=\gamma^3(1)=(a,0)\,,
\end{equation*}
for some $a>0$. 


We denote by $\alpha^i$ the angles between $(1,0)$ and $\tau^i(0)$. Up to relabeling the curves and up to a symmetry with respect to the horizontal axis we can assume that $\alpha^1\in [0,\tfrac{\pi}{3}]$ and that 
\[\alpha^2=\alpha^1+\tfrac23 \pi,\quad \alpha^3=\alpha^1-\tfrac23 \pi.
\]
We claim that for any $i$ there exists $t^i\in [0,1]$
such that $\frac{\dot{\gamma}^i(t^i)}{\vert \dot{\gamma}^i(t^i)\vert}=(1,0)$. We suppose for now that the claim is proven, and proceed to show how to conclude.
We apply Lemma \ref{2c} separately to $(\gamma^i|_{[0,t^i]})_{i=1}^3$ and to $(\gamma^i|_{[t^i,1]})_{i=1}^3$. The total angle spanned by the three curves in the first case is 
\begin{align*}
|\alpha^1|+|\alpha^2|+|\alpha^3|&=|\alpha^1|+\left|\alpha^1+\tfrac23\pi\right|+\left|\alpha^1-\tfrac23\pi\right|\\
&=\alpha^1+\left(\alpha^1+\tfrac23\pi\right)+\left(\tfrac23\pi-\alpha^1\right)\\
&=\alpha^1+\tfrac43\pi\geq \tfrac43\pi
\end{align*}
and therefore by Lemma \ref{2c} the total energy of the first three pieces of curves $(\gamma^i|_{[0,t^i]})_{i=1}^3$ is at least $\tfrac83 \pi$. In a totally analogous way we conclude the same for the final pieces of curves $(\gamma^i|_{[t^i,1]})_{i=1}^3$ and thus we conclude that $\mathcal{E}(\Theta)\geq\tfrac{16}{3}\pi$.

We are left to show how to prove the claim, namely that for any $i$ there exists $t^i\in [0,1]$
such that $\frac{\dot{\gamma}^i(t^i)}{\vert \dot{\gamma}^i(t^i)\vert}=(1,0)$.
Unless the curve $\gamma^i$ is a horizontal straight segment (and in this case the result trivially follows)
by analiticity there are finitely many intersections with the horizontal axis
$(a_j,0)=\gamma^i(t_j)$ with $t_j<t_{j+1}$. Moreover, since $\gamma^i(1)=(a,0)$ with $a>0$,
there exists an index $j$ such that $a_j<a_{j+1}$.
We consider the arc $\sigma(t):=\gamma^i(t)$ with $t\in (t_j,t_{j+1})$ that,
without loss of generality,  lies in the half--plane $\{y>0\}$.
It is possible to complete  $\sigma(t)$ to a smooth simple closed curve 
with $t\in [t_j,s]$ for some $s>t_{j+1}$ such that for every $t\in (t_{j+1},s)$ it holds
$\frac{\dot{\sigma}(t)}{\vert\dot{\sigma}(t)\vert}\neq (1,0)$.
Then by Hopf's Umlaufsatz \cite[Theorem 2.4.7]{AbToCurveSuperfici}  the degree of the tangent map of $\sigma(t)$ is different from zero,
and thus the tangent map is surjective. 
Therefore the value $(1,0)$ must be attained in the interval $[t_j,t_{j+1}]$
and the claim is proved.
\end{proof}

\section{A characterization of degenerate networks in $\mathbb{R}^2$}\label{charact}

Although the definition of the class of degenerate networks 
given in Definition~\ref{degnetwRd} is very neat 
and convenient, it has a disadvantage:
it is based on the validation of the angle condition given by Definition \ref{angolifissatiRd}, and thus on the assumption that there exists a family 
$\Tau$ of real or virtual tangents. 
In general one cannot easily verify this condition.
At least when the ambient space is $\mathbb{R}^2$ we are able to
give an equivalent characterization based on conditions that can be validated 
by a procedure consisting in a finite number of steps. 
The procedure can be roughly summarized as follows: given an $N$-graph $G$ with assigned angles $\mathcal{D}$, to every path $\mathcal{P}$ made of consecutive edges in $G$ we assign an abstract “total turning angle” $\Theta(\mathcal{P})$ that is obtained by summing the turning angles (forced by the angle condition $\mathcal{D}$) at every junction of the path. This angle only depends on the graph structure and on $\mathcal{D}$, but not on a realization of the graph as a network. Then a network $\mathcal{N}=(G,\Sigma)$ satisfies the angle condition given by $\mathcal{D}$ if and only if: for every cycle $C$ (i.e. closed path) entirely contained in $\Sing(\mathcal{N})$ we have that $\Theta(C)$ is a multiple of $2\pi$; for every (open) path $\mathcal{P}$ whose first and last edges are in $\Reg(\mathcal{N})$ but whose other edges are all in $\Sing(\mathcal{N})$ the total angle $\Theta(\mathcal{P})$ coincides with the angle between the real tangents (given by $\Sigma$) at the the second and penultimate vertices of the path.

\medskip

From now on we fix the dimension of the ambient space to $d=2$.

\begin{defn}[Path and cycle]
Let $G$ be an $N$--graph with assigned angles.
A path composed of $\mathcal{J}$ edges is a sequence 
\begin{align*}
\mathcal{P}: \{1,\ldots, \mathcal{J}\} &\to \{0,1\}\times\{1, \ldots, N\}\\
j&\to (z_j,i_j)
\end{align*}
such that for every $j\in\{1,\ldots,\mathcal{J}-1\}$ 
it holds $\pi(1-z_j,i_j)=\pi(z_{j+1},i_{j+1})$.
A path is a \emph{cycle} if $\pi(1-z_{\mathcal{J}},i_{\mathcal{J}})=\pi(z_{1},i_{1})$.
If a path is not a cycle we call it \emph{open path}.
\end{defn}
We should think of $(z_j,i_j)$ as encoding the following fact: when we travel through the path, 
the $j$--th edge is the one with index $i_j$ and its first endpoint that we meet is $z_j$.

When the first element of a path $\mathcal{P}$ is $(z,i)$ and the last is $(w,j)$
sometimes we will simply say that $\mathcal{P}$ is a path from the edge
$E_i$ to the edge $E_j$.

\medskip

Let $\vec{a}$ and $\vec{b}$ two vectors in $\mathbb{R}^2$.
Then the angle between  $\vec{a}$ and $\vec{b}$ , denoted by
$\angle \vec{a},\vec{b}$, is an element of $\mathbb{R}$ mod $2\pi$. From now on and for the rest of the section, an angle $\angle \vec{a},\vec{b}$ will always be understood as an element of $\mathbb{R}$ mod $2\pi$, and a representative for $\angle \vec{a},\vec{b}$ will be always assumed to be the element of $[0,2\pi)$ such that the counterclockwise rotation of $\vec{a}$ of such an angle yields $\vec{b}$.

\begin{defn}[Angle of a path]\label{angolocammino}
Let $G$ be an $N$--graph with assigned angles $\mathcal{D}$ in $\mathbb{R}^2$.
We define the angle of a path $\mathcal{P}$ composed of $\mathcal{J}$ edges as
\[
\Theta(\mathcal{P}):=
\begin{cases}
\sum_{j=1}^{\mathcal{J}-1}\left(\angle -d^{1-z_j,i_j},d^{z_{j+1},i_{j+1}} \right)\; \mathrm{mod} 2\pi
&\text{if}\;\mathcal{P}\;\text{is an open path}\\
\sum_{j=1}^{\mathcal{J}}\left(\angle -d^{1-z_j,i_j},d^{z_{j+1},i_{j+1}} \right)\;\mathrm{mod} 2\pi
&\text{if}\;\mathcal{P}\;\text{is a cycle}
\end{cases}
\]
with the understanding that the indices are modulo $\J$, so that $\J+1=1$.
\end{defn}

\medskip

For the sake of readability when we consider a 
path composed of $\mathcal{J}$ edges 
we often suppose that, up to relabeling and reparametrizing the edges, we have
\begin{align*}
\mathcal{P}: \{1,\ldots, \mathcal{J}\} &\to \{0,1\}\times \{1,\ldots, \mathcal{J}\}\\
j&\to (0,j)
\end{align*}
and that $\pi(1,j)=\pi(0,j+1)$. Moreover if $\mathcal{P}$
is a cycle we require that $\pi(1,\mathcal{J})=\pi(0,1)$.
In this setting
\[
\Theta(\mathcal{P}):=
\begin{cases}
\sum_{j=1}^{\mathcal{J}-1}\left(\angle -d^{1,j},d^{0,j+1} \right)\;\mathrm{mod} 2\pi
&\text{if}\;\mathcal{P}\;\text{is an open path}\\
\sum_{j=1}^{\mathcal{J}}\left(\angle -d^{1,j},d^{0,j+1} \right)\;\mathrm{mod} 2\pi
&\text{if}\;\mathcal{P}\;\text{is a cycle}
\end{cases}
\]
with the understanding that the indices are modulo $\J$, so that $\J+1=1$.
When 
$\mathcal{N}=(G,\Gamma)$ is a network of class $\mathcal{C}_{\mathrm{Reg}}$ 
and we denote by $\gamma^j=\Gamma\vert_{ E_j}$
we have that
$\gamma^j(1)=\gamma^{j+1}(0)$ for $j\in\{1\ldots,\mathcal{J}-1\}$ and, 
 if $\mathcal{P}$ is a cycle,
$\gamma^{\mathcal{J}}(1)=\gamma^1(0)$.

\begin{lem}\label{condizioneangolisingolari}
Let $\{\mathcal{N}_n\}_{n\in\mathbb{N}}$
be a sequence of networks in $\mathcal{C}_{\mathrm{Reg}}$
such that
\begin{equation*}
\limsup_n \mathcal{E}(\mathcal{N}_n)\leq C<+\infty\,.
\end{equation*}
\begin{itemize}
\item Suppose that $\mathcal{P}$ is an open path composed of $\mathcal{J}$ edges
and that $\lim_{n\to\infty}\ell(\gamma_n^j)=0$ for $j\in\{2,\ldots,\mathcal{J}-1\}$.
Then $\Theta(\mathcal{P})=\lim_{n\to\infty}\left(\angle -\tau_n^{1,1},\tau_n^{0,\mathcal{J}}\right)$.
\item Suppose that $\mathcal{P}$ is a cycle composed of $\mathcal{J}$ edges
and that $\lim_{n\to\infty}\ell(\gamma_n^j)=0$ for $j\in\{1,\ldots,\mathcal{J}\}$.
Then $\Theta(\mathcal{P})=0$ (mod $2\pi$).
\end{itemize}
\end{lem}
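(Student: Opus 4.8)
The plan is to read $\Theta(\mathcal P)$ as the total turning of the \emph{direction of travel} along the path, to show that at each junction this turning is exactly the one prescribed by the angle condition, and that the turning accumulated \emph{inside} each collapsing edge is negligible by Lemma~\ref{pocaoscillazione}.

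First I would adopt the relabelling convention stated above the lemma, so that $\mathcal P(j)=(0,j)$ with $\pi(1,j)=\pi(0,j+1)$ (and $\pi(1,\mathcal J)=\pi(0,1)$ in the cycle case), and reparametrise each regular curve of $\mathcal N_n$ with constant speed. By Definition~\ref{outertang} the unit tangent to $\gamma_n^j$ in the direction of travel is $\tau_n^{0,j}$ at the first endpoint and $-\tau_n^{1,j}$ at the last endpoint. Viewing every angle $\angle\vec a,\vec b$ as an element of $\mathbb R/2\pi\mathbb Z\cong\mathrm{SO}(2)$, so that $\angle\vec a,\vec b+\angle\vec b,\vec c=\angle\vec a,\vec c$, I would telescope the total turning: in the open case
\[
\angle-\tau_n^{1,1},\tau_n^{0,\mathcal J}
=\sum_{j=1}^{\mathcal J-1}\angle-\tau_n^{1,j},\tau_n^{0,j+1}
\;+\;\sum_{j=2}^{\mathcal J-1}\angle\tau_n^{0,j},-\tau_n^{1,j}
\pmod{2\pi},
\]
and in the cycle case, starting and returning to $\tau_n^{0,1}$,
\[
0=\sum_{j=1}^{\mathcal J}\angle-\tau_n^{1,j},\tau_n^{0,j+1}
\;+\;\sum_{j=1}^{\mathcal J}\angle\tau_n^{0,j},-\tau_n^{1,j}
\pmod{2\pi},
\]
where in both cases the first sum collects the junction contributions and the second the within–edge contributions.

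Then I would estimate the two sums separately. For the junction terms, at $p=\pi(1,j)=\pi(0,j+1)$ the angle condition (Definition~\ref{angolifissati}) applied to $\mathcal N_n$ provides a rotation $\mathrm{R}_p\in\mathrm{SO}(2)$ (depending on $n$) with $\tau_n^{1,j}=\mathrm{R}_p(d^{1,j})$ and $\tau_n^{0,j+1}=\mathrm{R}_p(d^{0,j+1})$; since a rotation preserves oriented angles, $\angle-\tau_n^{1,j},\tau_n^{0,j+1}=\angle-d^{1,j},d^{0,j+1}$ for \emph{every} $n$, so by Definition~\ref{angolocammino} the first sum equals $\Theta(\mathcal P)$ identically in $n$. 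For the within–edge terms, applying Lemma~\ref{pocaoscillazione} to each $\gamma_n^j$ with $\ell(\gamma_n^j)\to0$ and choosing $x=0$, $y=1$ gives $|\tau_n^{0,j}+\tau_n^{1,j}|=|\tau_n^{0,j}-(-\tau_n^{1,j})|\to0$, so each $\angle\tau_n^{0,j},-\tau_n^{1,j}$ is congruent mod $2\pi$ to a real number $\theta_n^j\to0$. Collecting: in the open case $\angle-\tau_n^{1,1},\tau_n^{0,\mathcal J}\equiv\Theta(\mathcal P)+\sum_{j=2}^{\mathcal J-1}\theta_n^j\pmod{2\pi}$ with the sum tending to $0$, which yields $\lim_n\angle-\tau_n^{1,1},\tau_n^{0,\mathcal J}=\Theta(\mathcal P)$; in the cycle case $\Theta(\mathcal P)\equiv-\sum_{j=1}^{\mathcal J}\theta_n^j\pmod{2\pi}$, and since the left side does not depend on $n$ while the right side tends to $0$, the discreteness of $2\pi\mathbb Z$ forces $\Theta(\mathcal P)\equiv0\pmod{2\pi}$.

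The only genuinely delicate point is the bookkeeping modulo $2\pi$: one must make sure that the telescoping is an identity in $\mathbb R/2\pi\mathbb Z$ (the representatives in $[0,2\pi)$ chosen in Definition~\ref{angolocammino} need not add up literally), and that the asymptotic straightness of a collapsing edge is read off as ``$\angle\tau_n^{0,j},-\tau_n^{1,j}$ has a representative tending to $0$'' rather than to $2\pi$. Everything else — the telescoping identity, the rotation–invariance of oriented angles, and the passage to the limit — is routine.
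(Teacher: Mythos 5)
Your argument is correct and is essentially the paper's own proof: the same telescoping of $\angle-\tau_n^{1,1},\tau_n^{0,\mathcal J}$ into junction terms (identified with $\Theta(\mathcal P)$ via the rotation invariance built into the angle condition) plus within--edge terms controlled by Lemma~\ref{pocaoscillazione}, with the cycle case closed up in the same way. Your extra care with the mod-$2\pi$ bookkeeping and the discreteness of $2\pi\mathbb{Z}$ only makes explicit what the paper leaves implicit.
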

\begin{proof}
Suppose that $\mathcal{P}$ is an open path.
For every $n\in\mathbb{N}$ we have that 
\begin{align*}
\angle -\tau_n^{1,1},\tau_n^{0,\mathcal{J}}&
=\sum_{j=1}^{\mathcal{J}-1}\angle -\tau_n^{1,j},\tau_n^{0,j+1}
+\sum_{j=2}^{\mathcal{J}-1}\angle \tau_n^{0,j},-\tau_n^{1,j}\\
&
=\sum_{j=1}^{\mathcal{J}-1}\left(\angle -d^{1,j},d^{0,j+1} \right)
+\sum_{j=2}^{\mathcal{J}-1}\angle 
\frac{\dot\gamma^j_n(0)}{\vert\dot\gamma^j_n(0)\vert}, 
\frac{\dot\gamma^j_n(1)}{\vert\dot\gamma^j_n(1)\vert}\,.
\end{align*}
Then thanks to Lemma~\ref{pocaoscillazione}
one gets the desired result passing to the limit $n\to\infty$.

If instead $\mathcal{P}$ is a cycle, then $\tau_n^{0,\J+1}=\tau_n^{0,1}$ and 
so using again by Lemma~\ref{pocaoscillazione} we obtain
\begin{equation*}
\Theta(\P)=\lim_{n\to\infty} \angle -\tau_n^{1,1},\tau_n^{0,1}=0\,,
\end{equation*}
as desired.
\end{proof}

It is possible to give an alternative definition of the angle condition 
for a singular network with respect to Definition~\ref{angolifissatiRd}
in term of paths,  justified by Lemma~\ref{condizioneangolisingolari}.

\begin{defn}[Angle condition for a singular network in term of paths]\label{angolisingolari}
Let $G$ be an $N$--graph with assigned angles.
We say that a network $\N$ satisfies the angle condition if
\begin{itemize}
\item[i)] its regular curves satisfy the angle condition
in the sense of Definition~\ref{angolifissati};
\item[ii)] if $\mathcal{P}$ is a cycle composed of $\mathcal{J}$ edges such that
for every $j\in\{1,\ldots,\mathcal{J}\}$ the edges
$E_j$ are in $\mathrm{Sing}(\mathcal{N})$ 
then $\Theta(\mathcal{P})=0$ (mod $2\pi$);
\item[iii)] if $\mathcal{P}$ is an open path composed of $\mathcal{J}$ edges such that
for every $j\in\{2,\ldots,\mathcal{J}-1\}$ the edges 
$E_j$ are in $\mathrm{Sing}(\mathcal{N})$ 
and $E_1\cup E_{\mathcal{J}}\subset \mathrm{Reg}(\mathcal{N})$
then $\Theta(\mathcal{P})=\angle -\tau^{1,1},\tau^{0,\mathcal{J}}$.
\end{itemize}
\end{defn}

\begin{rem}
Condition $\mathrm{i)}$ can be seen as a particular case of $\mathrm{iii)}$ by taking $\J=2$.
\end{rem}

Clearly one has to modify also Definition~\ref{def:stratstraightRd} 
and Definition~\ref{degnetwRd}
accordingly to the above new definition of angle condition.

\begin{defn}\label{def:stratstraight}
Fix an $N$--graph with assigned angles $G$.
A subgraph $H \subseteq G$ is \emph{stratified-straight} 
if there exists a finite sequence of subgraphs  (called \emph{strata})
\begin{equation*}
\emptyset=H_q\subset H_{q-1}\subset\ldots\subset H_1\subset H_0=H
\end{equation*}
and maps $\Sigma_i:H_i\to \R^2$  such that for $i=0,\ldots,q-1$ 
\begin{itemize}
\item $(H_i,\Sigma_i)$ is a (possibly singular) network 
that satisfies the angle condition
in the sense of Definition~\ref{angolisingolari} and whose curves are (possibly degenerate) straight segments;
\item  $H_{i+1}=\mathrm{Sing}((H_i,\Sigma_i))$.
\end{itemize}
We call \emph{step} of $G$ the minimal $q$ for which the above holds.
\end{defn}

\begin{defn}\label{degnetw}
Let $G$ be an $N$--graph with assigned angles. 
A network $\mathcal{N}=(G,\Gamma)$
is degenerate if
\begin{itemize}
\item it satisfies the angle condition in the sense of Definition~\ref{angolisingolari};
\item the singular part $\mathrm{Sing}(\N)$ is a stratified-straight subgraph 
in the sense of Definition~\ref{def:stratstraight}.
\end{itemize}
\end{defn}

\begin{prop}
Let $G$ be a graph with assigned angles. Suppose that the ambient space is $\mathbb{R}^2$. 
Then a network $\mathcal{N}=(G,\Gamma)$ satisfies 
Definition~\ref{angolisingolari} if and only if it satisfies Definition~\ref{angolifissatiRd} 
for some choice of virtual tangent vectors.
\end{prop}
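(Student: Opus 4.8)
The plan is to prove the two implications separately, matching up the data involved in the two definitions: a family $\Tau=(\tau^{z,i})$ of real/virtual tangents on one side, and the collection of path/cycle angle identities on the other. I would begin by recording the elementary but crucial observation already used implicitly in Lemma~\ref{condizioneangolisingolari}: if $\gamma^i$ is a \emph{regular} curve, then $\angle\,\tau^{0,i},-\tau^{1,i}$ equals $\angle\,\dot\gamma^i(0)/|\dot\gamma^i(0)|,\dot\gamma^i(1)/|\dot\gamma^i(1)|$, i.e.\ the "turning" of the curve between its endpoints, while if $\gamma^i$ is a \emph{singular} curve, Definition~\ref{angolifissatiRd} forces $\tau^{0,i}=-\tau^{1,i}$, so that the analogous turning is $0$ (mod $2\pi$). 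Thus in either case, along any path the outer/virtual tangents satisfy the telescoping identity
\[
\angle\,-\tau^{z_1,i_1},\tau^{1-z_{\J},i_{\J}}
=\sum_{j=1}^{\J-1}\angle\,-\tau^{1-z_j,i_j},\tau^{z_{j+1},i_{j+1}}
+\sum_{j=2}^{\J-1}\angle\,\tau^{z_j,i_j},-\tau^{1-z_j,i_j}\pmod{2\pi},
\]
and the angle condition \eqref{eq4} says precisely that each summand of the first sum equals $\angle\,-d^{1-z_j,i_j},d^{z_{j+1},i_{j+1}}$ (the rotation $\mathrm{R}_p$ at the shared junction cancels). This is the bookkeeping identity that translates one definition into the other.

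For the implication Definition~\ref{angolifissatiRd} $\Rightarrow$ Definition~\ref{angolisingolari}: given $\Tau$, condition~i) is immediate since it is literally a subcase of \eqref{eq4}. For condition~iii), apply the telescoping identity to an open path whose internal edges $E_2,\ldots,E_{\J-1}$ are singular: each internal turning term $\angle\,\tau^{z_j,i_j},-\tau^{1-z_j,i_j}$ vanishes because $\tau^{0,i_j}=-\tau^{1,i_j}$, so the sum collapses to $\Theta(\mathcal P)$, which therefore equals $\angle\,-\tau^{1,1},\tau^{0,\J}$. For condition~ii), do the same for a cycle all of whose edges are singular: every turning term vanishes, the telescoping closes up on itself (here one uses $\pi(1-z_{\J},i_{\J})=\pi(z_1,i_1)$ so that the first and last virtual tangents are tied together through a single rotation $\mathrm{R}_p$ and cancel), and one gets $\Theta(\mathcal P)=0$ (mod $2\pi$). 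This direction is essentially Lemma~\ref{condizioneangolisingolari} without the limiting sequence.

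The converse, Definition~\ref{angolisingolari} $\Rightarrow$ Definition~\ref{angolifissatiRd}, is where the real work lies: one must \emph{construct} virtual tangents $\tau^{z,i}$ for every singular edge and rotations $\mathrm{R}_p$ for every vertex, consistent with both the real tangents of the regular curves and with all the path identities. The natural strategy is to work connected-component-wise on $\Sing(\mathcal N)$: fix a spanning tree $T$ of a connected component $K$ of $\Sing(\mathcal N)$, pick a root edge, assign its virtual tangent arbitrarily (or, if $K$ touches the regular part, pin it down using an open path to a regular curve via condition~iii)), and then propagate the definition of $\tau^{z,i}$ along $T$ by forcing $\tau^{0,i}=-\tau^{1,i}$ on each edge and forcing the junction-angle relation $\angle\,-\tau^{1-z_j,i_j},\tau^{z_{j+1},i_{j+1}}=\angle\,-d^{1-z_j,i_j},d^{z_{j+1},i_{j+1}}$ at each vertex of $T$ (this determines a rotation $\mathrm R_p$ at each junction). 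The main obstacle — and the place where hypotheses ii) and iii) are exactly what is needed — is \textbf{well-definedness}: one must check that the tangent assigned to an edge does not depend on the path in $T$ used to reach it, and that the relations remain consistent on the non-tree edges of $K$ and at junctions shared with $\Reg(\mathcal N)$. Consistency around each independent cycle of $K$ is guaranteed by condition~ii) (the total turning is $0$ mod $2\pi$, so going around the loop returns the same tangent), and consistency of a singular component with the surrounding regular curves is guaranteed by condition~iii) (any open path through the component produces the angle forced by the already-fixed real tangents). Since any two paths between two edges differ by a cycle, closing up all cycles suffices. Finally, for vertices $p$ where several edges meet, one should check that the single rotation $\mathrm R_p$ built from one pair of concurrent edges correctly relates \emph{all} directions $d^{z,i}$ with $\pi(z,i)=p$ to the corresponding $\tau^{z,i}$; this follows because the turning relations at $p$ were imposed pairwise and the assigned directions $\mathcal D$ together with the $\tau$'s differ, pair by pair, by the same angle, hence by a common rotation (in $\R^2$ a rigid motion fixing the pairwise angles is a rotation). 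Assembling these component-wise constructions yields a global family $\Tau$ and rotations $\{\mathrm R_p\}$ witnessing Definition~\ref{angolifissatiRd}.
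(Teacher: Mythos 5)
Your proposal is correct and takes essentially the same route as the paper: the forward implication is the telescoping computation the paper carries out (culminating in its identity \eqref{eq6}), and the converse constructs the virtual tangents by propagating a pinned tangent through each connected component of $\Sing(\mathcal{N})$ and checks consistency via the cycle condition ii) and the open-path condition iii). Your spanning-tree/fundamental-cycle bookkeeping is merely a cleaner organization of the same consistency checks that the paper performs iteratively (two-edge cycles, triangles, then longer cycles through the base vertex $p_0$).
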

\begin{proof}
Suppose that $\N$ satisfies the angle condition as in Definition~\ref{angolisingolari}.
The outward tangents of the regular curves of $\N$ satisfy the requests of
Definition~\ref{angolifissatiRd} because of condition i) in Definition~\ref{angolisingolari}.
We have to construct the set of virtual tangents.
We assume for the moment that at least one curve of $\N$ is regular,
for instance $\Gamma_{\vert E_1}=:\gamma^1$. 
Let $H\subset G$ be a closed connected component of $\mbox{\rm Sing }\mathcal{N}$
and suppose that an endpoint $p_0=\pi(0,1)$ of $E_1$ lies in $H$. 
We define $\tau^{w,j}$ for any $\pi(w,j)=p_0$ with $\gamma^j$ singular by setting $\tau^{w,j}$ equal to the 
counterclockwise rotation of $\tau^{0,1}$ of the angle $\angle d^{0,1}, d^{w,j}$; whenever such $\tau^{w,j}$ 
has been defined, we also set $\tau^{1-w,j}=-\tau^{w,j}$. It follows that~\eqref{eq4} is satisfied at $p_0$. 
Observe that if $\tau^{z,i}, \tau^{y,l}$ are defined by this last step and $\pi(z,i)=\pi(y,l)\neq p_0$, then
\begin{equation}\label{eq5}
\angle \tau^{z,i}, \tau^{y,l} = \angle d^{z,i}, d^{y,l}\,.
\end{equation}
In fact the path $\mathcal{P}$ given by
\begin{equation*}
\mathcal{P}(1)=(1-z,i),\quad\mathcal{P}(2)=(y,l),
\end{equation*}
is a cycle contained in $\mbox{Sing }\mathcal{N}$, and thus by assumption we get that
\begin{equation*}
\begin{split}
	\Theta(\mathcal{P})&=\angle -d^{z,i}, d^{y,l} +\angle -d^{1-y,l},d^{1-z,i}=\angle -d^{z,i}, d^{y,l} +\angle -\tau^{1-y,l},\tau^{1-z,i}=\\&= \angle -d^{z,i}, d^{y,l} +\angle \tau^{y,l},-\tau^{z,i}
	= \pi + \angle d^{z,i}, d^{y,l} + \pi + \angle \tau^{y,l},\tau^{z,i}=\\
	&=2\pi+\angle d^{z,i}, d^{y,l} +2\pi -\angle \tau^{z,i}, \tau^{y,l}=0\;\text{mod}2\pi\,.
\end{split}
\end{equation*}
With this procedure we have defined every virtual tangent at $p_0$ of singular edges with an endpoint at $p_0$, and some virtual tangents at $p\neq p_0$ of singular edges having endpoints at $p$ and $p_0$. 

Let $p\neq p_0$ be now any vertex at which at least one virtual or real tangent $\tau^{z,i}$ is defined. 
We can perform the very same construction for the still undefined virtual tangents at $p$ using rotations of $\tau^{z,i}$ in place of the original $\tau^{0,1}$. 
By~\eqref{eq5} it follows that~\eqref{eq4} is satisfied at $p$.

Notice that if $\pi(x,a)=\pi(z,i)\neq p_0, \pi(y,b)=\pi(1-z,i)\neq p_0$, and $\pi(1-x,a)=\pi(1-y,b)=p_0$
(and thus we have just constructed $\tau^{z,i}$ as a rotation of $\tau^{x,a}$ and $\tau^{1-z,i}$ as a rotation of $\tau^{y,b}$), then $\tau^{z,i}=-\tau^{1-z,i}$ as desired. In fact considering the cycle
\begin{equation*}
\mathcal{P}(1)=(1-y,b),\quad\mathcal{P}(2)=(1-z,i),\quad\mathcal{P}(3)=(x,a)\,,
\end{equation*}
we get that
\begin{equation}\label{eq:6}
\Theta(\mathcal{P})=\angle -\tau^{y,b}, \tau^{1-z,i} + \angle - \tau^{z,i},\tau^{x,a} + \angle 
-\tau^{1-x,a},\tau^{1-y,b}=0\quad\mbox{mod }2\pi\,.
\end{equation}
On the other hand, since the sum of the exterior angles of a triangle equals $2\pi$, we have that
\begin{equation}\label{eq:7}
\angle -\tau^{y,b}, -\tau^{z,i} + \angle -\tau^{z,i},\tau^{x,a} + \angle -\tau^{1-x,a},\tau^{1-y,b}=2\pi=0\quad\mbox{mod }2\pi\,,
\end{equation}
and subtracting~\eqref{eq:7} to~\eqref{eq:6}
we get that $\angle -\tau^{y,b}, \tau^{1-z,i}= \angle -\tau^{y,b}, -\tau^{z,i} $ mod $2\pi$.\\
Now if $p_1\in H$ is a vertex of an edge having the other endpoint at $p_0$, all the (virtual and real) tangents at $p_1$ are defined. From the above argument, it follows that we can define $\tau^{1-w,j}=-\tau^{w,j}$ for any virtual tangent $\tau^{w,j}$ of a singular edge $E_j$ having an endpoint at $p_1$ without getting contradictions with the other already defined virtual tangents.\\
Therefore it follows that we can then iterate the above arguments, possibly considering cycles passing through $p_0$ that are not triangles, and use the assumption on $\Theta(\mathcal{P})$, in order to check that the iterated construction of virtual tangents does not lead to contradictions. Eventually,  we are able to define the virtual tangents at any vertex of the connected component $H$, getting that Definition~\ref{angolifissatiRd} is satisfied at such vertices.
%
We can apply the same argument to any connected component $H$ of $\mbox{Sing }\mathcal{N}$, completing the implication.\\
If instead it occurs that $\mbox{Sing }\mathcal{N}=G$, we can just choose $\tau^{0,1}=(1,0)=-\tau^{1,1}$ arbitrarily. Then we can perform the very same construction described above.

\smallskip

Conversely, suppose now that Definition~\ref{angolifissatiRd} is satisfied. 
Then obviously the regular curves satisfy the angle condition i) of Definition~\ref{angolisingolari}. 
Moreover, if $\mathcal{P}$ is an open path, then the condition $\tau^{z,i}=-\tau^{1-z,i}$ on the virtual tangents implies point iii) of Definition~\ref{angolisingolari}. 
More generally observe that if the edges seen by a path $\mathcal{P}$ of step $\mathcal{J}$ 
are contained in $\mbox{\rm Sing }\mathcal{N}$, and for simplicity we write $\mathcal{P}(i)=(0,i)$, then
\begin{equation}\label{eq6}
	\sum_{l=l_0}^{L-1} \angle -\tau^{1,l},\tau^{0,l+1}=\angle -\tau^{1,l_0},\tau^{0,L}\quad\mbox{mod }2\pi\,.
\end{equation}
So, finally suppose that $\mathcal{P}$ is a cycle in $\mbox{\rm Sing }\mathcal{N}$ 
and we write $\mathcal{P}(i)=(0,i)$ for simplicity; 
by the fact that every involved tangent is virtual, using~\eqref{eq6} we get that
\begin{equation*}
\begin{split}
	\Theta(\mathcal{P})&=\left(\sum_{i=1}^{\mathcal{J}-1} \angle -\tau^{1,i},\tau^{0,i+1} \right)+ \angle -\tau^{1,\mathcal{J}},\tau^{0,1}= \angle -\tau^{1,1},\tau^{0,\mathcal{J}} + \angle -\tau^{1,\mathcal{J}},\tau^{0,1} =\\&=
	\angle \tau^{0,1},\tau^{0,\mathcal{J}} + \angle \tau^{0,\mathcal{J}},\tau^{0,1} = 0\quad\mbox{mod }2\pi\,,
\end{split}
\end{equation*}
thus completing the equivalence of the definitions.
\end{proof}

We underline the fact that in Definition~\ref{def:stratstraight} we simply replace
the angle condition of Definition~\ref{angolifissatiRd} by the one of Definition~\ref{angolisingolari},
and these replacement directly affect the new Definition~\ref{degnetw}
of degenerate networks.
Then it is clear that taking advantage of the above proposition we are
also able to prove the following:

\begin{cor}
Let $G$ be a graph with assigned angles. Suppose that the ambient space is $\mathbb{R}^2$. 
Then a network $\mathcal{N}=(G,\Gamma)$ is degenerate in the sense of Definition~\ref{degnetwRd}
if and only if it is degenerate in the sense of Definition~\ref{degnetw}.
\end{cor}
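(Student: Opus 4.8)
The whole argument reduces to the preceding Proposition, which says that in $\mathbb{R}^2$ the angle condition of Definition~\ref{angolisingolari} and that of Definition~\ref{angolifissatiRd} are equivalent for \emph{any} network, in particular for any subnetwork $(H,\Sigma)$ (the assigned angles being inherited). The two notions of degenerate network in the Corollary differ only in (i) which angle condition is imposed on $\N$, and (ii) which angle condition each stratum $(H_j,\Sigma_j)$ of the stratified--straight decomposition is asked to satisfy (Definition~\ref{def:stratstraightRd} vs.\ Definition~\ref{def:stratstraight}); the requirement that the regular curves of each stratum be straight segments and the recursion $H_{j+1}=\mathrm{Sing}((H_j,\Sigma_j))$ are word for word the same in both formulations. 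So, applying the Proposition once to $\N$ and once to each $(H_j,\Sigma_j)$ will give the equivalence \emph{modulo} the one discrepancy: Definitions~\ref{degnetwRd} and \ref{def:stratstraightRd} additionally demand that the real/virtual tangents of every stratum coincide with a single global family $\Tau$ (the one witnessing the angle condition of $\N$), whereas Definitions~\ref{degnetw} and \ref{def:stratstraight} carry no such clause. Reconciling this is the only genuine point, and it is where I would spend the bulk of the work.

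\textbf{From the planar definition to the $\mathbb{R}^d$ one.} Suppose $\N=(G,\Gamma)$ is degenerate in the sense of Definition~\ref{degnetw}, so $\N$ satisfies Definition~\ref{angolisingolari} and $\mathrm{Sing}(\N)=H_0\supset H_1\supset\cdots\supset H_q=\emptyset$, with maps $\Sigma_j$, each $(H_j,\Sigma_j)$ made of straight segments and satisfying Definition~\ref{angolisingolari}. Since the decomposition terminates, every edge $E_i\subset H_0$ lies in a unique $H_j\setminus H_{j+1}$, hence is a genuine straight segment of $(H_j,\Sigma_j)$. I would then define a candidate global family $\Tau=(\tau^{z,i})$ by taking, on $\mathrm{Reg}(\N)$, the real outer tangents of $\Gamma$, and on each $E_i\subset H_0$, the real outer tangent of $\Sigma_j|_{E_i}$ for the index $j$ with $E_i\subset H_j\setminus H_{j+1}$. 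Because the two outer tangents of a straight segment are opposite, $\tau^{0,i}=-\tau^{1,i}$ for every singular edge of $\N$. The remaining assertion — that at every vertex $p$ all these tangents are one fixed rotation $R_p$ of the assigned directions, so that Definition~\ref{angolifissatiRd} holds for $\N$ with tangents $\Tau$, and likewise that $\Tau|_{H_j}$ is a valid tangent family for $(H_j,\Sigma_j)$ — is exactly the vertex--by--vertex propagation carried out in the proof of the Proposition: one checks, travelling around paths and cycles contained in $\mathrm{Sing}(\N)$, that the ``rotate around a junction'' rule produces no contradiction, the identities $\Theta(C)\equiv 0\ (\mathrm{mod}\,2\pi)$ and $\Theta(\mathcal{P})=\angle -\tau^{1,1},\tau^{0,\mathcal{J}}$ of Definition~\ref{angolisingolari} (available for $\N$ and for each stratum) being precisely the compatibility relations needed. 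This yields that $\N$ is degenerate in the sense of Definition~\ref{degnetwRd}.

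\textbf{The converse, and the main obstacle.} Conversely, if $\N$ is degenerate per Definition~\ref{degnetwRd}, then $\N$ and each $(H_j,\Sigma_j)$ satisfy Definition~\ref{angolifissatiRd}, hence by the Proposition they satisfy Definition~\ref{angolisingolari}; forgetting the now--redundant consistency clause shows $\mathrm{Sing}(\N)$ is stratified--straight in the sense of Definition~\ref{def:stratstraight}, so $\N$ is degenerate per Definition~\ref{degnetw}. Combining the two implications proves the Corollary. The hard part is the bookkeeping in the middle step: verifying that the per--stratum virtual tangents furnished by the Proposition can be chosen coherently for all strata at once and coherently with $\N$'s family — equivalently, that the global propagation defining $\Tau$ is well posed — which is exactly what the path/cycle angle conditions secure. (In the degenerate case $\mathrm{Sing}(\N)=G$ there are no real tangents to anchor the construction; one then fixes an arbitrary unit vector at a single vertex, exactly as in the proof of the Proposition, and the rest of the argument is unchanged.)
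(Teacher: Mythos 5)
Your overall strategy — apply the preceding Proposition once to $\mathcal{N}$ and once to each stratum, and observe that the only real issue is the consistency clause present in Definitions~\ref{degnetwRd} and~\ref{def:stratstraightRd} but absent from Definitions~\ref{degnetw} and~\ref{def:stratstraight} — is exactly the intended one (the paper dispatches the Corollary with a one-line appeal to the Proposition), and your converse direction (from Definition~\ref{degnetwRd} to Definition~\ref{degnetw}, by forgetting the consistency clause) is correct. However, the forward direction as you wrote it has a genuine gap. You define the global family $\Tau$ on a singular edge $E_i$ to be the real tangent of the \emph{given} stratum map $\Sigma_j|_{E_i}$, and then claim that Definition~\ref{angolifissatiRd} holds for $\mathcal{N}$ with this $\Tau$, the path/cycle identities of Definition~\ref{angolisingolari} being ``precisely the compatibility relations needed''. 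They are not: nothing in Definition~\ref{angolisingolari} or in Definition~\ref{def:stratstraight} ties the orientation of the maps $\Sigma_j$ to $\Gamma$. Concretely, take $\mathrm{Sing}(\mathcal{N})$ equal to a single collapsed edge $E$ with distinct endpoints; then $H_0=E$, and \emph{any} nondegenerate segment $\Sigma_0:E\to\R^2$, in any direction, satisfies the angle condition of Definition~\ref{angolisingolari} (the two junctions of $H_0$ have order one, so the rotation there is unconstrained), hence is an admissible witness for Definition~\ref{def:stratstraight}. With such a $\Sigma_0$ pointing in a ``wrong'' direction, your $\Tau$ on $E$ cannot be completed by any rotation $\mathrm{R}_p$ aligning it together with the real tangents of the regular curves of $\mathcal{N}$ at the junction, so Definition~\ref{angolifissatiRd} fails for your candidate family. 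The path/cycle conditions constrain which virtual tangents \emph{can exist} for $\mathcal{N}$; they do not constrain the given $\Sigma_j$'s.

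The missing ingredient is the freedom to \emph{replace} the strata maps. The clean route is: first apply the Proposition to $\mathcal{N}$ itself to obtain a family $\Tau$ of real and virtual tangents satisfying Definition~\ref{angolifissatiRd}; then, for each stratum and each connected component of $H_j$, observe that a tangent family subject to $\tau^{0,i}=-\tau^{1,i}$ on every edge (true for straight segments and for virtual tangents) and to a single rotation per junction is determined, in $\R^2$, by its value on one half-edge, hence is unique up to one global rotation of the component. Since $\Tau|_{H_j}$ is such a family (use the rotations $\mathrm{R}_p$ of $\mathcal{N}$ restricted to the edges of $H_j$), one may replace $\Sigma_j$ by $Q\circ\Sigma_j$ for a suitable rotation $Q$ on each component; this preserves straightness of the regular curves, the angle condition, and $\mathrm{Sing}((H_j,\Sigma_j))$, and makes the stratum's (real or virtual) tangents coincide with $\Tau$, which is what Definitions~\ref{def:stratstraightRd} and~\ref{degnetwRd} require. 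With this adjustment inserted, your argument closes; without it, the step ``Definition~\ref{angolifissatiRd} holds for $\mathcal{N}$ with tangents $\Tau$'' is false in general.
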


\begin{rem}
As we already mentioned, we remark again that it is somehow easier to use Definition~\ref{angolifissatiRd} in the technical arguments. However, in the very remarkable case of dimension $d=2$, 
Definition~\ref{angolisingolari} has the great advantage of being verifiable by an algorithm with finitely many steps. This is clearly not true for the general Definition~\ref{angolifissatiRd}.
\end{rem}

\section{On the relation between straight and stratified straight subgraphs}\label{strstrcoinceideconstr}

In this section we study a simple but remarkable case in which we completely characterize stratified straight or straight subgraphs. This helps us to better understand the algebraic and combinatorial relation between these two concepts, together with providing a non-trivial case in which the two definitions are not equivalent.

In the whole section we study networks in $\mathbb{R}^2$ and we take advantage of the equivalent
characterization of the class of degenerate networks we presented in Section~\ref{charact}:
we use Definition~\ref{angolisingolari}, Definition~\ref{def:stratstraight} and Definition~\ref{degnetw}.

\medskip

Throughout the section we will consider an $N$--graph $G$ with junctions of order at most four and suppose that 
for every junction $p=\pi(z_1,i_1)=\ldots=\pi(z_k,i_k)$ with $k\le 4$
the vectors $d^{z_1,i_1},...,d^{z_k,i_k}$ are distinct and they form angles that are multiples of $\frac\pi2$. In this section, if $\vec{a},\vec{b}$ are two planar vectors, we denote by $\angle \vec{a},\vec{b}\in[0,2\pi)$ the angle described by the counterclockwise rotation of $\vec{a}$ that yields $\vec{b}$.

\begin{rem}
Let $\N=(G,\Gamma)$ be a degenerate network of the type considered above, $H$ a stratified
straight subgraph of $G$ composed of edges $E_1,\ldots,E_{k}$
and $\Sigma:H\to\mathbb{R}^2$ such that
$(H,\Sigma)$ is a  (possibly singular) network 
that satisfies the angle condition
in the sense of Definition~\ref{angolisingolari} and whose curves are (possibly degenerate) straight segments.
We observe that there exist only two possible orthogonal directions, 
identified by two orthogonal unit vectors $a,b$, such that, 
if $\sigma^i:=\Sigma\vert _{E_i}$ is a regular straight segment, 
then $\dot{\sigma}^i$ is parallel to $a$ or $b$. 
In particular, up to rotation, we can assume that $a=(1,0)$ and $b=(0,1)$.
\end{rem}

\begin{rem}[Canonical assignment of the vectors $d^{z,i}$]\label{sceltacanonica}
Let $\N=(G,\Gamma)$ be a degenerate network and $H$ a stratified
straight subgraph of $G$ composed of the edges $E_1,\ldots,E_{k}$.
For every $(z,i)\in\{0,1\}\times\{1,\ldots,k\}$ we can give an explicit choice of 
the vector $d^{z,i}$, once a first edge is chosen.
Fix first $d^{0,1}=(1,0)$ for example. For every $(z,i)\in\{0,1\}\times\{1,\ldots,k\}$ we want that
$d^{z,i}\in \{(\pm1,0), (0,\pm1)\}$. We require then that for any $i$ it holds
\begin{equation}\label{eq3}
\begin{split}
d^{0,i}=-d^{1,i}\,.
\end{split}
\end{equation}
Then this choice is well defined for any $i$ and uniquely defines the $d^{z,i}$'s. 
In fact, by considering a path from $\pi(0,1)$ to $\pi(z,i)$ and using the rule~\eqref{eq3}
one can determine uniquely $d^{z,i}$.
Suppose by contradiction that following two different paths $\mathcal{P}$ and $\mathcal{P}'$ 
from $(0,1)$ to $(z,i)$ and using the rule~\eqref{eq3} we get different results 
$d^{(z,i)}=a_{\mathcal{P}}$ and $d^{z,i}=b_{\mathcal{P}'}$. 
Then $\angle a_{\mathcal{P}},b_{\mathcal{P}'} \in\{\tfrac{\pi}{2},\pi,\tfrac{3\pi}{2}\}$.
Call $\mathcal{P}''$ the inverted path of $\mathcal{P}'$ from $\pi(z,i)$ to $\pi(0,1)$.
We can join the two paths  $\mathcal{P}$ and $\mathcal{P}''$ and
the resulting path is a cycle $\mathcal{Q}$ starting from $(0,1)$. 
Then it follows that $\Theta(\mathcal{Q})\in\{\tfrac{\pi}{2},\pi,\tfrac{3\pi}{2}\}$, a contradiction to the fact that $H$ is stratified straight.
\end{rem}

This canonical choice of the $d^{z,i}$ clearly depends on the choice of a starting vertex $\pi(0,1)$ together with the assignment $d^{0,1}=(0,1)$. We can use such canonical choice in order to introduce an
order relation $\preceq$ on the set of vertices.

\begin{defn}\label{relazioendordine}
Let $H$ be a connected stratified straight subgraph of $G$, and suppose that $E_{i_0}\subset H$. Starting from $\pi(0,i_0)$ define $d^{0,i_0}=(1,0)$ and canonically assign the vectors $d^{z,i}$ as specified in Remark \ref{sceltacanonica}. Let $v$ and $w$ be two vertices of $H$.
We say that  $v\preceq w$ if and only if there exists a path 
$\mathcal{P}:\{1,...,\mathcal{J}\}\to\{0,1\}\times \{1,...,N\}\,$ such that
\begin{equation*}
\begin{split}
&\mathcal{P}(1)=(z_1,i_1),\,\mathcal{P}(\mathcal{J})=(z_{\mathcal{J}},i_{\mathcal{J}}),\,\,
\mbox{ with $v=\pi(z_1,i_1)$ and $w=\pi(z_\J,i_\J)$}\,,\\
& d^{\mathcal{P}(i)}\neq(-1,0) \quad\forall i=1,...,\mathcal{J}-1\,,
\end{split}
\end{equation*}
Also, we say that $v\prec w$ if and only if $v\preceq w$ and $w\not\preceq v$.
\end{defn}
Roughly speaking $v\preceq w$ if and only if we can reach $w$ starting from $v$ 
with a path that ``never goes left''. Also observe that the order relation depends on the choice of a certain edge $E_{i_0}$ in the considered subgraph.\\
The order relation defines two subsets of the vertices as defined below.

\begin{defn}\label{def:XeY}
	Let $H$ be a connected stratified straight subgraph of $G$, and suppose that $E_{i_0}\subset H$. Starting from $\pi(0,i_0)$ define $d^{0,i_0}=(1,0)$ and canonically assign the vectors $d^{z,i}$ as specified in Remark \ref{sceltacanonica}. We define
	\begin{equation}
		\begin{split}
			&X(i_0):=\left\{ w\in V_{H} \,\,|\,\,\pi(0,i_0)\prec w \right\},\\
			&Y(i_0):= V_{H}\setminus X(i_0).
		\end{split}
	\end{equation}
\end{defn}

\begin{prop}\label{prop:QuadratiStraight}
Let $G$ be an $N$--graph with assigned angles. 
Suppose that every junction of $G$ has order at most $4$ and that, 
if $p$ is a junction with $\pi^{-1}(p)=\{(z_1,i_1),...,(z_k,i_k)\}$, 
then the vectors $d^{z_1,i_1},...,d^{z_k,i_k}$ form angles equal to $\tfrac{n\pi}{2}$ 
for $n\in\{1,2,3\}$. Let $H\subset G$ be a connected stratified straight subgraph and denote by $(H,\Sigma_0)$ the first stratum of $H$.\\
Suppose that for every $E_{i_0}\subset H\cap \mbox{\rm Sing }(H,\Sigma_0)$ there do not exist cycles $\mathcal{P}=(\mathcal{P}(1),\ldots, \mathcal{P}(\mathcal{J}))$ such that
\begin{align*}
&\mathcal{P}(1)=(z_0,i_0)\\
&d^{(z_0,i_0)}=(1,0)\\
&d^{\mathcal{P}(j)}\neq (-1,0)\quad \forall j=2,\ldots, \mathcal{J}-1.
\end{align*}
Then $H$ is straight.
\end{prop}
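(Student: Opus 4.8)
The plan is to argue directly, by successively ``un-collapsing'' the degenerate edges of the first stratum $(H_0,\Sigma_0)=(H,\Sigma_0)$, producing in the end a realization of $H$ all of whose edges are nondegenerate straight segments. Set $H_1:=\mathrm{Sing}(H,\Sigma_0)$, the subgraph of edges collapsed by $\Sigma_0$. I first record a reduction. Using the canonical assignment of Remark~\ref{sceltacanonica}, every edge $E_i$ of $H$ carries a direction $d^{0,i}\in\{(\pm1,0),(0,\pm1)\}$ with $d^{0,i}=-d^{1,i}$; this is well defined exactly because $H$ is stratified straight, equivalently $\Theta(\mathcal{Q})\equiv 0\ (\mathrm{mod}\ 2\pi)$ for every cycle $\mathcal{Q}$ in $H$. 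I claim that \emph{any} map $\Sigma\colon V_H\to\mathbb{R}^2$ with $\Sigma(\pi(1,i))-\Sigma(\pi(0,i))=\lambda_i\, d^{0,i}$, $\lambda_i\ge 0$, for all $i$, yields a network $(H,\Sigma)$ satisfying the angle condition of Definition~\ref{angolisingolari} whose curves are (possibly degenerate) straight segments: conditions ii) and iii) depend only on the (canonical) $\mathcal{D}$ and follow by telescoping $\sum_j\angle(-d^{1,j},d^{0,j+1})=\sum_j\angle(d^{0,j},d^{0,j+1})$ in $\mathbb{R}/2\pi$ together with $\Theta\equiv 0$ on cycles, while condition i) holds with all rotations $R_p=\mathrm{Id}$; the network is regular precisely when all $\lambda_i>0$. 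Up to a global rotation, $\Sigma_0$ has this form, with $\lambda_i>0$ exactly for $E_i\notin H_1$. Thus it suffices to reach a realization of this form with all $\lambda_i>0$.

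The core construction is, for each edge $E_{i_0}$ of $H_1$, a ``shift'' operation. Pass to the canonical assignment $\mathcal{A}_{i_0}$ rooted at $\pi(0,i_0)$ with $d^{0,i_0}=(1,0)$ (differing from the previous assignment by a rotation that is a multiple of $\tfrac{\pi}{2}$, so it changes neither directions nor lengths and corresponds bijectively on realizations), and consider the order relation $\preceq$ and the sets $X(i_0),Y(i_0)$ of Definitions~\ref{relazioendordine}--\ref{def:XeY}. I would prove two combinatorial facts. \emph{First,} $\pi(0,i_0)\in Y(i_0)$ trivially, while $\pi(1,i_0)\in X(i_0)$: indeed $\pi(0,i_0)\preceq\pi(1,i_0)$ via the single edge $E_{i_0}$, and if also $\pi(1,i_0)\preceq\pi(0,i_0)$ then prepending $E_{i_0}$ to the corresponding ``never-left'' walk produces precisely a cycle of the type excluded by hypothesis through $E_{i_0}$, a contradiction. \emph{Second,} any edge $E_j$ with one endpoint in $X(i_0)$ and the other in $Y(i_0)$ is ``horizontal'' in $\mathcal{A}_{i_0}$, with $d_{\mathcal{A}_{i_0}}^{0,j}=(1,0)$ if its $\mathcal{A}_{i_0}$-tail lies in $Y(i_0)$ and $d_{\mathcal{A}_{i_0}}^{0,j}=(-1,0)$ if its $\mathcal{A}_{i_0}$-tail lies in $X(i_0)$; this is a short case analysis: if $d_{\mathcal{A}_{i_0}}^{0,j}\neq(-1,0)$ and the tail is in $X(i_0)$, extending a never-left walk across $E_j$ forces the head into $X(i_0)$, and symmetrically using $d_{\mathcal{A}_{i_0}}^{1,j}$ from the head. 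Granting these, for any $\eps>0$ the map $\Psi_{i_0}\colon\Sigma\mapsto\Sigma+\eps\,\chi_{X(i_0)}\,d^{0,i_0}$ (here $d^{0,i_0}$ is the travel direction of $E_{i_0}$, evaluated in the fixed reference frame) sends an admissible realization to another one: edges with both endpoints in $X(i_0)$ or both in $Y(i_0)$ are unchanged, while every crossing edge keeps its direction and has its coefficient increased by $\eps$. In particular $\Psi_{i_0}$ decreases no $\lambda_j$ and sends $\lambda_{i_0}$ to $\lambda_{i_0}+\eps>0$.

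To conclude, enumerate the edges of $H_1$ as $E_{j_1},\dots,E_{j_m}$ and set $\Sigma_\ast:=\Psi_{j_m}\circ\cdots\circ\Psi_{j_1}(\Sigma_0)$; this makes sense because each $\Psi_{j_l}$ preserves admissibility, and the combinatorial data governing $\Psi_{j_{l+1}}$ depend only on $H$ and on $\mathcal{A}_{j_{l+1}}$, not on the current realization. Along this composition no coefficient ever decreases, and the coefficient of $E_{j_l}$ becomes positive at stage $l$ and stays positive thereafter, while edges outside $H_1$ are already nondegenerate under $\Sigma_0$. Hence every edge of $H$ has positive coefficient in $\Sigma_\ast$, so $(H,\Sigma_\ast)$ is a regular network whose curves are straight segments; that is, $H$ is straight. (The statement already assumes $H$ connected; otherwise one repeats the argument on each component.)

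The step I expect to be the main obstacle is the second combinatorial fact together with the derivation of the excluded cycle in the first one: namely, pinning down exactly which edges of $H$ join $X(i_0)$ and $Y(i_0)$ and with which orientation, and handling the bookkeeping of the ``free last edge'' in the definition of $\preceq$. This is where the hypotheses on $G$ (right-angle junctions, hence canonical grid directions) and the absence of the forbidden cycles are genuinely used; the remaining points — that admissible realizations automatically meet the angle condition, and that the shifts compose — are routine.
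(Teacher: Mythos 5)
Your proposal is correct and follows essentially the same strategy as the paper's proof: the canonical axis-aligned assignment, the relation $\preceq$ rooted at the degenerate edge, the partition into $X(i_0)$ and $Y(i_0)$, the forbidden-cycle hypothesis to place $\pi(1,i_0)\in X(i_0)$, the observation that every $X$--$Y$ crossing edge is horizontal and oriented from $Y$ to $X$, and the $\eps$-shift of the $X$ vertices, iterated over the singular edges of the first stratum. Your two refinements — the explicit reduction to nonnegative-coefficient realizations and the transitivity-of-$\preceq$ argument for the crossing edges (which also handles the strictness needed to conclude the head lies in $X(i_0)$) — are sound and, if anything, slightly cleaner than the paper's corresponding Case~2 discussion, but they do not change the route.
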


\begin{proof}
Denote by $(H,\Sigma_0)$ the first stratum of $H$, i.e. $\Sigma_0:H\to\mathbb{R}^2$ defines a degenerate network, 
the regular curves of $\Sigma_0$ are straight segments and at least
one curve,  say $\sigma_0^{i_0}:={\Sigma_0}\vert_{{E_{i_0}}}$, is degenerate.
We denote by $\sigma_0^i$ the curve $\Sigma_0\vert_{E_i}$.\\
We want to prove that we can modify $(H,\Sigma_0)$ into a new degenerate network $(H,\tilde{\Sigma}_0)$ 
such that $\tilde{\sigma}_0^{i_0}$ is a regular straight segment
and if $\sigma_0^i$ is a regular straight segment then so is $\tilde{\sigma}_0^i$. 
In such a way, since the edges of $H$ are finitely many, 
iterating the argument we conclude that $H$  is straight.\\
In order to simplify the notation, let us write that $i_0=1$.\\
Fix $d^{0,1}=(1,0)$ and consider the order relation $\preceq$ induced by this choice as given by Definition \ref{relazioendordine}. Clearly $Y(1)\neq\emptyset$, in fact $\pi(0,1)\not\prec\pi(0,1)$ by definition. Also we have that $\pi(1,1)\in X(1)\neq\emptyset$; indeed $\pi(0,1)\preceq\pi(1,1)$, $\sigma_0^1$ is degenerate, and if by contradiction $\pi(1,1)\preceq \pi(0,1)$ then there exists a path $\mathcal{P}$ of step $\mathcal{J}$ that for simplicity we denote by
\[
\begin{split}
	&\mathcal{P}(1)=(0,k_0),\\
	&\mathcal{P}(j)=(0,j) \qquad\forall\,j=2,...,\mathcal{J}-1,\\
	&\mathcal{P}(\mathcal{J})=(0,1),
\end{split}
\]
such that $\pi(0,k_0)=\pi(1,1)$ and $d^{0,j}\neq(-1,0)$ for any $j=k_0,2,...,\mathcal{J}-1$.
But since $d^{0,1}=(1,0)$, the cycle given by
	\[
	\begin{split}
	&\mathcal{R}(1)=(0,1),\\
	&\mathcal{R}(j)=\mathcal{P}(j-1) \qquad\forall\,j=2,...,\mathcal{J},
	\end{split}
	\]
	contradicts the hypothesis.\\

Now we construct $\tilde{\Sigma}_0$. Let us first define such map on the set of vertices 
$V_{H}$ by setting
\begin{equation*}
	\tilde{\Sigma}_0(v)=
	\begin{cases}
		\Sigma_0(v) &\text{if}\; v\in Y(1)\,,\\
		\Sigma_0(v) +(\varepsilon,0) & \text{if}\;v\in X(1)\,,
	\end{cases}
\end{equation*}
for some $\varepsilon>0$.
We need to check that we can extend $\tilde{\Sigma}_0$ to the edges in a consistent way.
We claim that for any edge $E_j$,
if $\tilde{\Sigma}_0^j(0)\neq\tilde{\Sigma}_0^j(1)$,
the images  of its boundary points $\tilde{\Sigma}_0^j(0),\tilde{\Sigma}_0^j(1)$
can be connected by a regular straight segment $\tilde{\sigma}_0^j(t)$ such that 
$d^{0,j}=\alpha(\tilde{\sigma}_0^j(1)-\tilde{\sigma}_0^j(0))$ with $\alpha>0$. 
Assuming that the claim is true, then the
map $\tilde{\Sigma}_0$ is defined on every edge in the natural way by connecting 
with a straight segment the image through $\tilde{\Sigma}_0$ of its endpoints.
For $\varepsilon$ small enough, all the regular straight segments of $\Sigma_0$ remain regular.
Moreover,
since $\pi(1,1)\in X(1)$ and $\pi(0,1)\in Y(1)$ then $\tilde{\sigma}_0^1(0)\neq\tilde{\sigma}_0^1(1)$ and thus 
$\tilde{\sigma}_0^1$ is a regular straight segment, and the proof is completed.\\
In order to prove the claim we distinguish two cases, adopting the following notation:
\begin{equation*}
\begin{split}
&A(1)=\{E_i \mbox{ edge of }H\,|\,\, \pi(0,i)\in X(1),\,\pi(1,i)\in X(1)\}\,,\\
&B(1)=\{E_i \mbox{ edge of }H\,|\,\, \pi(0,i)\not\in X(1),\,\pi(1,i)\not\in X(1)\}\,,\\
&C(1)=\{E_i \mbox{ edge of }H\}\setminus(A(1)\cup B(1))\,.
\end{split}
\end{equation*} 
\begin{itemize}
\item[Case 1:] Assume first that $E_j\in A(1)\cup B(1)$. 
Then both endpoints of $E_j$ have been moved or both remained unchanged, that is
\begin{equation*}
	\tilde{\Sigma}_0(0)=\Sigma_0(0) \qquad\mbox{and}\qquad \tilde{\Sigma}_0(1)=\Sigma_0(1)\,,
\end{equation*}
	or
\begin{equation*}
	\tilde{\Sigma}_0(0)=\Sigma_0(0)+(\varepsilon,0) \qquad\mbox{and}\qquad \tilde{\Sigma}_0(1)=\Sigma_0(1)+(\varepsilon,0)\,.
\end{equation*}
If $\Sigma_0(0)=\Sigma_0(1)$, that is $\sigma_0^j$ is degenerate, then $\tilde{\Sigma}_0(0)=\tilde{\Sigma}_0(1)$ as well and $\tilde{\sigma}_0^j$ will be degenerate. 
If otherwise $\Sigma_0(0)\neq\Sigma_0(1)$, that is $\sigma_0^j$ is a regular straight segment, then there is $\alpha>0$ such that $d^{0,j}=\alpha(\Sigma_0(1)-\Sigma_0(0))$, and then $d^{0,j}=\alpha(\tilde{\Sigma}_0(1)-\tilde{\Sigma}_0(0))$ as well and a straight segment $\tilde{\sigma}_0$ satisfies the claim.
\item[Case 2:]We are then left with the case of $E_j\in C(1)$, that is when one of the endpoints of $E_j$ has been moved and the other has not. In such a case, up to relabeling, we can assume  $\pi(1,j)\in X(1)$
(i.e. $\pi(0,1)\prec\pi(1,j)$) and $\pi(0,j)\in Y(1)$. Thus 
\begin{equation*}
\tilde{\Sigma}_0(\pi(0,j))=\Sigma_0(\pi(0,j)) \qquad\mbox{and}\qquad
\tilde{\Sigma}_0(\pi(1,j))=\Sigma_0(\pi(1,j))+(\varepsilon,0)\,.
\end{equation*}
If $d^{0,j}=(1,0)$, the claim is proved. Let us show that this is the case.
We have that $d^{0,j}\neq\pm(0,1)$, otherwise $E_j$ would belong to $A(1)\cup B(1)$,
because $\pi(1,j)\preceq \pi(0,j)\preceq \pi(1,j)$.
Suppose by contradiction that $d^{0,j}=-(1,0)$. 
In this case $d^{1,j}=-d^{0,j}=(1,0)$, and thus $\pi(1,j)\preceq\pi(0,j)$.
By assumption there exists a path $\mathcal{P}$ with $\pi(\mathcal{P}(1))=\pi(0,1),\, \mathcal{P}(\mathcal{J})=(1,j)$ such that $d^{\mathcal{P}(i)}\neq (-1,0)$ for any $i=1,...,\mathcal{J}-1$. 
Extending $\mathcal{P}$ to a longer path by setting $\mathcal{P}(\mathcal{J}+1)=(0,j)$
it follows that $\pi(0,1)\preceq \pi(0,j)$.
Actually we have that $\pi(0,1)\prec\pi(0,j)$, for otherwise, as shown before in the case of $\pi(1,1)$,  
we could construct a cycle $\mathcal{Q}$ of step $\mathcal{K}$ starting at $\mathcal{Q}(1)=(1,j)$ such that $d^{(1,j)}=(1,0)$ and $d^{\mathcal{Q}(k)}\neq(-1,0)$ for any $k=2,...,\mathcal{K}-1$, contradicting the hypothesis.\\
But then we have that $\pi(0,j)\in X(1)\cap Y(1)=\emptyset$, that is impossible. Therefore $d^{0,j}\neq (-1,0)$ and the proof of the claim, and then of the proposition, is completed.	
\end{itemize}
\end{proof}

\noindent The assumption of Proposition \ref{prop:QuadratiStraight} has the advantage of being verifiable just by looking at all the possible cycles starting from the degenerate edges of a stratified straight graph.\\

\noindent In the next example we show that the assumption of Proposition \ref{prop:QuadratiStraight} is also necessary, and thus the statement of such proposition is sharp.

\begin{Example}
	Here we give a simple but remarkable example of a graph $G$ with assigned angles such that every junction of $G$ has order at most $4$ and that, 
	if $p$ is a junction with $\pi^{-1}(p)=\{(z_1,i_1),...,(z_k,i_k)\}$, 
	then the vectors $d^{z_1,i_1},...,d^{z_k,i_k}$ form angles equal to $\tfrac{n\pi}{2}$
	with $n\in\{1,2,3\}$; moreover $G$ is stratified straight, but it is not straight, and in fact the assumption of Proposition \ref{prop:QuadratiStraight} is violated.\\
	Consider the image in $\R^2$ of the regular network drawn in Figure \ref{Fig:Controesempio}.	
		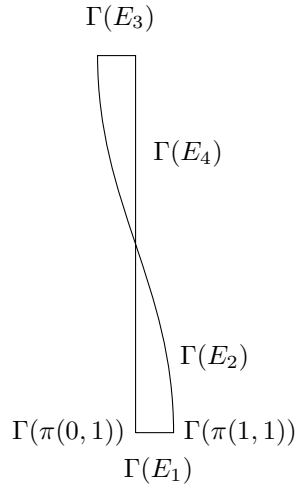
\begin{figure}[H]
			\begin{center}
				\begin{tikzpicture}[scale=1]
				\draw
				(0,0)--(0.5,0)to[out= 90,in=270, looseness=1] (-0.5,5)--(0,5)--(0,0);
				\path[font=\small]
				(0,0)node[left]{$\Gamma(\pi(0,1))$};
				\path[font=\small]
				(0.5,0)node[right]{$\Gamma(\pi(1,1))$};
				\path[font=\small]
				(0.3,-0.5)node{$\Gamma(E_1)$};
				\path[font=\small]
				(0.45,1)node[right]{$\Gamma(E_2)$};
				\path[font=\small]
				(-0.2,5.5)node{$\Gamma(E_3)$};
				\path[font=\small]
				(0.1,3.7)node[right]{$\Gamma(E_4)$};
				\end{tikzpicture}
			\end{center}
	\caption{In the picture we have the image in $\R^2$ of a regular network $(G,\Gamma)$.}\label{Fig:Controesempio}
		\end{figure}
	Such graph $G$ is stratified straight, in fact one can easily construct a sequence of maps $\Gamma_n:G\to\R^2$ such that $(G,\Gamma_n)$ is regular and $\Gamma_n$ converges strongly in $H^2$ to a constant map, i.e. the image of the graph disappears with elastic energy going to zero. However $G$ is not straight and a possible stratification of $G$ is given by
	\[
	H_0=G, \qquad H_1=E_1\cup E_3.
	\]
	
A possible immersion $\Sigma_1$ of the stratum $H_1$
is given by 
two disjoint and horizontal straight segments. In
the couple 
$(H_0,\Sigma_0)$, the immersion
$\Sigma_0$ is given by four curves: $\gamma^2$ and $\gamma^4$ are
two vertical and overlapping straight segments
with the same endpoints,
while $\gamma^1$, $\gamma^3$ are two constant maps, coinciding
with the endpoints of $\gamma^2$ and $\gamma^4$.

	We remark that, in fact, the assumption of Proposition \ref{prop:QuadratiStraight} is not satisfied. More generally we see that as long as a stratified straight subgraph $H$ contains a cycle like the one in Figure \ref{Fig:Controesempio}, then $H$ is not straight, the assumption of Proposition \ref{prop:QuadratiStraight} is not satisfied, and its proof does not work.	
\end{Example}

\section{Side Remarks}\label{sec:SideRemarks}

\subsection{Curves on surfaces/manifolds}

We comment on the fact that the very same kind of definitions about degenerate networks characterize a suitably defined problem for networks of curves into a $2$-dimensional surface in $\mathbb{R}^3$.\\
Fix a $2$-dimensional closed surface $S\subset \R^3$
and consider a network $\N=(G,\Gamma)$ with $\Gamma:G\to S$. Given a curve $\gamma:I\to S$, the geodesic curvature is given by $\vec{\kappa}_g(t)=\pi_{T_{\gamma(t)}S}\vec{\kappa}(t)$, 
where $\vec{\kappa}(t)$ is the standard curvature of $\gamma$ seen as a curve in $\R^3$ and $T_xS$ is the tangent plane to $S$ at the point $x\in S$. Therefore we define the general elastic energy of the curve $\gamma$ as
\begin{equation}
	\mathcal{E}_{\alpha,\beta}(\gamma) :=\alpha\int_{\gamma} |\vec{\kappa}_g|^2\,ds+\beta L(\gamma).
\end{equation}
Observe that, since the normal component of the curvature vector $\vec{\kappa}$ of $\gamma$ is bounded in terms of the second fundamental form of $S$, then a bound on $\mathcal{E}_{\alpha,\beta}$ actually is a bound on the $L^2$ norm of the whole vector $\vec{\kappa}$.\\
Analogous definitions of angle condition and degenerate network can be given for the class of networks having image in $S$. 
So, the compactness result of Proposition~\ref{regolariconvergonoadeg} can be still easily proved in this case. 
Then, also the proof of the recovery sequence presented in Proposition~\ref{recovery} 
can be adapted to the current situation.
Maintaining the notation of the proof of Proposition~\ref{recovery}, let us say that $H_0\subset\mbox{Sing }\mathcal{N}$ is connected and $\Gamma(H_0)=p\in S$; 
we want to provide an immersion of $H_0$ in $S$.
Since $S$ is a surface, there exists a local chart $\varphi:U\to\mathbb{R}^2$ at $p$ such that $\varphi$ is isothermal, i.e. the metric tensor $g$ of $S$ can be expressed as $g_{ij}=\lambda^2\delta_{ij}$ on $U$ in the chart $\varphi$. In particular we have that $\varphi$ is a conformal diffeomorphism with its image and its differential preserves angles between tangent vectors. Hence we can construct immersions of $H_0$ in $\varphi(U)$ exactly as in Proposition~\ref{recovery} and then we get the desired recovery sequence by applying $\varphi^{-1}$.

\subsection{Fixed length}\label{fixedlength}

We briefly discuss here an easier variant of Problem~\ref{problem}
of some interest in the applications.

\begin{rem}[Fixed length]
Putting $\alpha=1$ and $\beta=0$ in~\eqref{funzionalegenerale}, then $\widetilde{\mathcal{E}}$ reduces to a functional that we denote by $\mathcal{W}$, by analogy with the Willmore energy.

\begin{prob}\label{lunghezzefissate}
Given an $N$--graph $G$ with assigned angles we consider the minimization problem
\begin{equation*}
\inf\left\lbrace\mathcal{W}(\mathcal{N})\,\vert\, \mathcal{N}=(G,\Gamma)\in\mathcal{C}_{\mathrm{Reg}}\;
\text{with}\; \ell(\mathcal{N}^1),\ldots\ell(\mathcal{N}^N)\;\text{fixed}\right\rbrace\,.
\end{equation*}
\end{prob}

It is easy to prove that Problem~\ref{lunghezzefissate} admits a minimizer
in the class of regular networks by 
a direct method in the Calculus of Variations, 
as we shall now briefly sketch.

Consider a minimizing sequence of networks $\{\mathcal{N}_n\}_{n\in\mathbb{N}}$
composed of curves $\gamma^i_n$.
Combining the bounds~\eqref{normafunz},~\eqref{normader} and~\eqref{normadersec}
together with the fact that the length of each curve is fixed,
we get that  up to subsequence each $\gamma^i_n$
converges to a regular curve $\gamma^i_\infty$ weakly in $H^2$.
The limit networks satisfies the angle condition in the sense of Definition~\ref{angolifissati}
thanks to the strong $C^1$ convergence.
Moreover the functional $\mathcal{W}$ is clearly lower semicontinuous.

\medskip

Notice that fixing the length of each curve avoids any form of degeneracy 
of the limit networks and makes the question on existence of minimizers
trivial.
\end{rem}

\appendix

\section{Critical points}\label{criticalpoints}

We want now to derive the Euler Lagrange equations satisfied by 
\emph{regular} critical points of the functional $\mathcal{E}$. 

Let $\mathcal{N}$ be a regular network 
whose curves are parametrized  by arclength by $\gamma:[0,\ell(\gamma)]\to\mathbb{R}^2$.
We denote by $s$ the arclength parameter of $\gamma^i$.
We recall that $\partial_{s}\gamma=\tau$ and 
$\partial_{s}\tau=\vec{k}$.
For sake of notation 
we introduce the operator $\partial_s^\perp$ that acts on a vector field $\varphi$
giving the normal component of $\partial_s\varphi$ along the curve $\gamma$, that is
\begin{equation*}
\partial_s^\perp\varphi
=\partial_s\varphi-\left\langle \partial_s\varphi,\partial_s\gamma\right\rangle\partial_s\gamma\,.
\end{equation*}
Similarly we call $\partial_s^\parallel\varphi:=\left\langle \partial_s\varphi,\partial_s\gamma\right\rangle\partial_s\gamma
=\left\langle \partial_s\varphi,\tau\right\rangle\tau$.
In $\mathbb{R}^2$ it holds that $\partial_s^\perp\varphi=\left\langle \partial_s\varphi,\nu\right\rangle\nu$, where $\nu$ is the counterclockwise rotation of $\partial_s\gamma$ by an angle equal to $\tfrac{\pi}{2}$.

\medskip

We compute a ``regular" variation  $\mathcal{N}_\varepsilon$
of $\mathcal{N}$. Each curve of  $\mathcal{N}$ is parametrized by 
arclength by $\gamma^i$.
Let us consider $\varepsilon\in\mathbb{R}$ and smooth functions $\psi^{i}:[0,\ell(\gamma^i)]\to\mathbb{R}^2$. This defines the variation $\gamma^i_\varepsilon:[0,\ell(\gamma^i)]\to\mathbb{R}^2$ of a curve $\gamma^i$ by setting $\gamma^i_\varepsilon:=\gamma^i+\varepsilon\psi^i$.
Observe that the variation $\gamma^i_\varepsilon$ is
not necessarily parametrized by arclength. However, for $\varepsilon\in(-\varepsilon_0,\varepsilon_0)$ with $\varepsilon_0>0$ sufficiently small, $\gamma^i_\varepsilon$ is a regular curve with tangent vector $\tau^i_\varepsilon=\frac{\partial_s\gamma^i
+\varepsilon\partial_s\psi^i}{\vert \partial_s\gamma^i+\varepsilon\partial_s\psi^i \vert}$.

Consider a junction 
of order $m$ of $\mathcal{N}$, so that
\begin{equation*}
\gamma^{i_1}(z_1)=\ldots=\gamma^{i_m}(z_m)\,,
\end{equation*}
with 
$(z_1,i_1),\ldots,(z_m,i_m)\in \{0,\ell(\gamma^{i_1}), \ldots,\ell(\gamma^{i_m})\}\times\{1,\ldots,N\}$ 
all distinct. 
The scalar product of unit tangent vectors is given by
\begin{equation*}
\left\langle \tau^{i_1}(z_1),\tau^{i_2}(z_2)\right\rangle=c^{1,2}\,, 
\ldots\,,\, 
\left\langle  \tau^{i_{m-1}}(z_{m-1}),\tau^{i_m}(z_m)\right\rangle=c^{m-1,m}\,.
\end{equation*}

\smallskip

To generate admissible competitors for the Problem~\ref{problem}
for any $\varepsilon$ small enough, 
we need to require that
\begin{equation*}
\gamma_\varepsilon^{i_1}(z_1)=\ldots=\gamma_\varepsilon^{i_m}(z_m)\,,
\end{equation*}
together with the fact that the angle condition is preserved,
that is to say that for every $i,j,z,w$ such that $\gamma^i(z)=\gamma^j(w)$ it holds that
\begin{equation*}
\frac{\mathrm{d}}{\mathrm{d}\varepsilon}\left\langle\tau_\varepsilon^{i}(z),\tau_\varepsilon^{j}(w)\right\rangle=0\,,
\end{equation*}
for every $\varepsilon\in(-\varepsilon_0,\varepsilon_0)$.\\
First this implies that the variation fields $\psi^i$ satisfy that
\begin{equation}\label{proprieta1}
\psi^{i_1}(z_1)=\ldots=\psi^{i_m}(z_m)\,.
\end{equation}
Secondly, 
writing
\begin{equation*}
\partial_{s}\psi^{i}=\partial_{s}^\perp\psi^{i}+\partial_{s}^\parallel\psi^{i}
=\left\langle \partial_{s}\psi^i,\nu^i\right\rangle\nu^i
+\left\langle \partial_{s}\psi^i,\tau^i\right\rangle\tau^i
=:\overline{\psi}^i_{s}\nu^i+\widetilde{\psi}^i_s\tau^i\,,
\end{equation*}
a direct calculation yields
\begin{align*}
0&=\frac{\mathrm{d}}{\mathrm{d}\varepsilon}\left\langle\tau_\varepsilon^{i}(z),\tau_\varepsilon^{j}(w)\right\rangle =
\left\langle \frac{\partial_{s}\psi^i-\langle \partial_{s}\psi^i,\tau^i_\varepsilon\rangle \tau^i_\varepsilon}{|\partial_{s}\gamma^i+\varepsilon\partial_{s}\psi^i|}\bigg|_z, \tau_\varepsilon^j(w)  \right\rangle +
\left\langle  \tau_\varepsilon^i(z) , \frac{\partial_{s}\psi^i-\langle \partial_{s}\psi^i,\tau^i_\varepsilon\rangle \tau^i_\varepsilon}{|\partial_{s}\gamma^i+\varepsilon\partial_{s}\psi^i|}\bigg|_w \right\rangle\,.
\end{align*}
Evaluating at $\varepsilon=0$ we get
\begin{align*}
0&=
\left\langle
\partial_{s}^\perp\psi^i(z), \tau^j(w)\right\rangle
+\left\langle \tau^i(z),\partial_{s}^\perp\psi^j(w)\right\rangle\\
&=\overline{\psi}_{s}^{i}(z)\left\langle\nu^i(z),\tau^j(w) \right\rangle
+\overline{\psi}_{s}^{j}(w)\left\langle \tau^i(z), \nu^j(w)\right\rangle\\
&=\overline{\psi}_{s}^{i}(z)\left\langle\nu^i(z),\tau^j(w) \right\rangle
-\overline{\psi}_{s}^{j}(w)\left\langle\nu^i(z),\tau^j(w) \right\rangle\,.
\end{align*}
Therefore as a second requirement on the fields $\psi^i$ we impose that
\begin{equation}\label{proprieta2}
\overline{\psi}_{s}^{i_1}(z_1)=\ldots=\overline{\psi}_{s}^{i_m}(z_m)\,.
\end{equation}

\begin{defn}
Consider a regular network $\mathcal{N}=(G,\Gamma)$
composed of the curves $\gamma^i$  parametrized by 
arclength
with $i\in\{1,\ldots,N\}$,
$\varepsilon\in\mathbb{R}$ and smooth functions $\psi^{i}:[0,\ell(\gamma^i)]\to\mathbb{R}^2$.
We say that $\mathcal{N}_\varepsilon$
composed of the curves $\gamma^i_\varepsilon=\gamma^i+\varepsilon\psi^i$
 is a regular variation
of $\mathcal{N}$ if the functions $\psi^i$
satisfy the properties~\eqref{proprieta1} and~\eqref{proprieta2}.
\end{defn}

\begin{defn}
A regular network $\mathcal{N}=(G,\Gamma)$ is a 
critical point for the functional $\mathcal{E}$
if for every regular variation  $\mathcal{N}_\varepsilon$
of $\mathcal{N}$ it holds
\begin{equation*}
\frac{d}{d\varepsilon}\mathcal{E}(\mathcal{N}_\varepsilon)_{|\varepsilon=0}=0\,.
\end{equation*}
\end{defn}

We recall that the oriented curvature of a planar curve $\gamma$ is defined as the 
scalar $k$ such that $\vec{k}=k\nu$ where the unit normal vector
$\nu$ is the counterclockwise rotation
of $\frac{\pi}{2}$ of the unit tangent vector $\tau$ to the curve $\gamma$.


\begin{prop}\label{prop:CriticalPoints}
Let $\mathcal{N}=(G,\Gamma)$ be a critical point for $\mathcal{E}$. 
Then the arclength parametrization $\gamma^i:[0,\ell(\gamma^i)]\to\mathbb{R}^2$ of any curve of $\mathcal{N}$ is real analytic and satisfies the equation
\begin{equation}\label{eq2}
 2 (\partial^\perp_{s})^2\vec{k}^{i}
 + \vert\vec{k}^i\vert^2 \vec{k}^{i}
 -\vec{k}^{i}=0 \qquad\mbox{ on }(0,\ell(\gamma^i))\,,
\end{equation}
or, equivalently, in terms of the oriented curvature
\begin{equation*}
2 \partial^2_{s} k^i+\left(k^i\right)^{3} - k^i=0 \qquad \mbox{ on }(0,\ell(\gamma^i))\,.
\end{equation*}
Also, the curves satisfy the following boundary conditions. 
If $p=\pi(z_1,i_1)=...=\pi(z_m,i_{m})$ is a junction of order $m$ of $\mathcal{N}$, then
\begin{align}
\sum_{\substack{(z_j,i_j)\in\pi^{-1}(p)\,: \\ z_j=\ell(\gamma^{i_j})}} k^{i_j}(\ell(\gamma^{i_j}))&=\sum_{\substack{(z_j,i_j)\in\pi^{-1}(p)\,: \\ z_j=0}} k^{i_j}(0)\,,\label{boundarycond1}\\
\sum_{\substack{(z_j,i_j)\in\pi^{-1}(p)\,: \\ z_j=\ell(\gamma^{i_j})}} \big[2 \partial_{s}^\perp \vec{k}^{i_j} +(k^{i_j})^2\tau^{i_j} -\tau^{i_j}\big]\big|_{\ell(\gamma^{i_j})}&=\sum_{\substack{(z_j,i_j)\in\pi^{-1}(p)\,: \\ z_j=0}} \big[2 \partial_{s}^\perp \vec{k}^{i_j} +(k^{i_j})^2\tau^{i_j} -\tau^{i_j}\big]\big|_{0}\,.\label{boundarycond2}
\end{align}
\end{prop}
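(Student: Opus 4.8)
The plan is to compute the first variation $\tfrac{\mathrm{d}}{\mathrm{d}\varepsilon}\mathcal{E}(\mathcal{N}_\varepsilon)\big|_{\varepsilon=0}$ along an arbitrary regular variation, to write it as a sum of bulk integrals over the curves plus boundary terms localized at the junctions, and then to exploit the freedom in the choice of the fields $\psi^i$ in two stages: first with fields supported in the interior of a single curve, which produces the interior equation \eqref{eq2} (and, via a bootstrap, the regularity), and then with general admissible fields, which produces the junction conditions \eqref{boundarycond1}--\eqref{boundarycond2}.

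First I would treat a single curve $\gamma=\gamma^i$, arclength--parametrized on $[0,\ell]$ with $\ell:=\ell(\gamma^i)$, and a field $\psi\in C^\infty_c((0,\ell);\mathbb{R}^2)$. Since $|\dot\gamma|\equiv 1$, for $\varepsilon$ small $\gamma_\varepsilon=\gamma+\varepsilon\psi$ is a regular $H^2$ curve and $\varepsilon\mapsto\mathcal{E}(\gamma_\varepsilon)$ is smooth; differentiating the expression of $\mathcal{E}$ in terms of $\dot\gamma_\varepsilon,\ddot\gamma_\varepsilon$ at $\varepsilon=0$ and using $\langle\ddot\gamma,\dot\gamma\rangle=0$ gives the weak identity
\begin{equation*}
\int_0^\ell\Big(2\langle\vec{k},\ddot\psi\rangle+\big\langle(1-3|\vec{k}|^2)\tau,\dot\psi\big\rangle\Big)\,\mathrm{d}s=0\qquad\text{for all }\psi\in C^\infty_c((0,\ell);\mathbb{R}^2)\,.
\end{equation*}
Equivalently $(2\vec{k}-\mathcal{A})''=0$ in $\mathcal{D}'(0,\ell)$, where $\mathcal{A}$ is a primitive of the $L^1$ field $A:=(1-3|\vec{k}|^2)\tau$; hence $2\vec{k}-\mathcal{A}$ is affine, so $\vec{k}\in W^{1,1}\hookrightarrow C^0$ is bounded, which makes $A$ continuous, and iterating yields $\vec{k}\in C^\infty$, i.e.\ $\gamma\in C^\infty$. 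For a smooth curve the classical computation — using $\delta(\mathrm{d}s)=-\langle\vec{k},\psi\rangle\,\mathrm{d}s$, integrating by parts twice, and rewriting the decomposition of $\psi$ and of $\dot\psi$ into normal and tangential parts — turns the first variation into
\begin{equation}\label{eq:FirstVarSketch}
\frac{\mathrm{d}}{\mathrm{d}\varepsilon}\mathcal{E}(\gamma_\varepsilon)\Big|_{\varepsilon=0}=\int_0^\ell\big\langle 2(\partial_s^\perp)^2\vec{k}+|\vec{k}|^2\vec{k}-\vec{k},\,\psi\big\rangle\,\mathrm{d}s+\Big[\,2k\,\overline{\psi}_s-\big\langle 2\partial_s^\perp\vec{k}+k^2\tau-\tau,\,\psi\big\rangle\Big]_0^\ell\,.
\end{equation}
Testing \eqref{eq:FirstVarSketch} with $\psi\in C^\infty_c$ forces the bulk integrand to vanish, which is \eqref{eq2}; writing $(\partial_s^\perp)^2\vec{k}=k''\nu$ and $|\vec{k}|^2\vec{k}=k^3\nu$ gives the scalar form $2k''+k^3-k=0$. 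Since this ODE has polynomial, hence real analytic, nonlinearity, $k$ is real analytic on $[0,\ell]$, and since $\gamma$ solves the linear system $\dot\gamma=\tau$, $\dot\tau=k\nu$, $\dot\nu=-k\tau$ with analytic coefficient $k$, the arclength parametrization of $\gamma$ is real analytic.

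It remains to extract the junction conditions. Let $p=\pi(z_1,i_1)=\dots=\pi(z_m,i_m)$ be a junction of $\mathcal{N}$, with $z_j\in\{0,\ell(\gamma^{i_j})\}$. Given arbitrary $P\in\mathbb{R}^2$ and $Q\in\mathbb{R}$, one can build an admissible regular variation with $\psi^{i_j}(z_j)=P$ and $\overline{\psi}^{i_j}_s(z_j)=Q$ for every $j$ and $\psi$ vanishing near every other vertex: this is exactly what the compatibility conditions \eqref{proprieta1} and \eqref{proprieta2} allow, and it keeps each $\gamma^k_\varepsilon$ regular for $\varepsilon$ small. Since \eqref{eq2} already holds, summing \eqref{eq:FirstVarSketch} over the curves at $p$ — with the sign $+$ when $z_j=\ell(\gamma^{i_j})$ and $-$ when $z_j=0$ — and using the common values $P,Q$ leaves
\begin{equation*}
0=2Q\Bigg(\sum_{z_j=\ell(\gamma^{i_j})}k^{i_j}(\ell(\gamma^{i_j}))-\sum_{z_j=0}k^{i_j}(0)\Bigg)+\Bigg\langle\sum_{z_j=0}V^{i_j}(0)-\sum_{z_j=\ell(\gamma^{i_j})}V^{i_j}(\ell(\gamma^{i_j})),\,P\Bigg\rangle\,,
\end{equation*}
where $V^{i}:=2\partial_s^\perp\vec{k}^{i}+(k^{i})^2\tau^{i}-\tau^{i}$. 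As $P$ and $Q$ are arbitrary and independent, the coefficient of $Q$ and the vector paired with $P$ must vanish separately, which are precisely \eqref{boundarycond1} and \eqref{boundarycond2}.

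The delicate point is the regularity bootstrap: the first variation has to be performed for merely $H^2$ competitors, and the crucial gain $\vec{k}\in W^{1,1}$ comes from the one--dimensional structure (a distribution with vanishing second derivative is affine); once smoothness, and then analyticity, are in hand the rest is bookkeeping — noting in particular that the admissible fields carry exactly the two free junction data $P$ (common position) and $Q$ (common value of $\overline{\psi}_s$), that the tangential derivatives $\widetilde{\psi}_s$ at a junction do not enter the boundary term of \eqref{eq:FirstVarSketch}, and that the signs must be tracked correctly. This parallels the first--variation analysis in the dynamical setting of~\cite{garckemenzelpluda,garckemenzelpluda2}.
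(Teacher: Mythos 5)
Your proposal is correct and takes essentially the same route as the paper: the same first-variation formula with the same bulk and boundary terms, the interior equation \eqref{eq2} from compactly supported fields, the junction conditions from the two independent junction data (the paper simply splits this into the two choices $\psi^{i_j}=0$ and $\overline{\psi}^{i_j}_s=0$ at the vertices instead of using arbitrary $P,Q$ simultaneously), and analyticity from the scalar ODE $2\partial_s^2k+k^3-k=0$. The only differences are that you supply the $C^\infty$ bootstrap and a sketch of the weak Euler--Lagrange identity, which the paper delegates to a cited reference, and that you obtain analyticity up to the endpoints directly from smoothness plus the analytic ODE rather than via Cauchy--Kovalevskaya with Cauchy data expressed through the boundary conditions; both variants are sound.
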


\begin{rem}
Let us observe that the boundary conditions~\eqref{boundarycond1} and~\eqref{boundarycond2} 
do not depend on the parametrizations of the curves $\gamma^i$. More precisely, even if such boundary conditions are expressed in terms of quantities evaluated at $0$ or at $\ell(\gamma^i)$, if a curve $\gamma^j$ is reparametrized into the new arclength parametrized immersion $\tilde{\gamma}^j(t)=\gamma^j(\ell(\gamma^j)-t)$, then
\begin{align*}
\tilde{k}^j(\ell(\gamma^j))=-k^j(0)\,, \; &\,\qquad \tilde{k}^j(0)=-k^j(\ell(\gamma^j))\,,\\
\big[2 \partial_{\tilde{s}^j}^\perp \vec{\tilde{k}}^j +(\tilde{k}^j)^2\tilde{\tau}^j -\tilde{\tau}^j\big]\big|_{\ell(\gamma^j)} 
&=-\big[2 \partial_{s^j}^\perp \vec{k}^j +(k^j)^2\tau^j -\tau^j\big]\big|_{0}\,, \\
\big[2 \partial_{\tilde{s}^j}^\perp \vec{\tilde{k}}^j +(\tilde{k}^j)^2\tilde{\tau}^j -\tilde{\tau}^j\big]\big|_{0} 
&= - \big[2 \partial_{s^j}^\perp \vec{k}^j +(k^j)^2\tau^j -\tau^j\big]\big|_{\ell(\gamma^j)}\,,
\end{align*}
where symbols with the tilde identify the obvious geometric quantities 
in terms of the parametri--zation
 $\tilde{\gamma}^k$. 
\end{rem}

\begin{proof}[Proof of Proposition~\ref{prop:CriticalPoints}]
In order to calculate the first variation of the functional, 
we can fix a junction $p$ of $\mathcal{N}$ of order $m$ and consider a regular variation given by fields $\psi^{i_j}$ for $j=1,...,m$ such that: if $\pi(z_j,i_j)=p$ but $\pi(1-z_j,i_j)\neq p$, 
then $\psi^{i_j}\equiv0$ in a neighborhood of $1-z_j$. 
By direct computations (for the details for example see~\cite{daplu}) one shows
 that the curves are of class $C^\infty$ and they satisfy the first variation formula
\begin{align*}\label{boundarypart} 
 \frac{d}{d\varepsilon}\mathcal{E}(\mathcal{N}_\varepsilon)\vert_{\varepsilon=0}
 &=\sum_{j=1}^{m} 
 \int_{\gamma^{i_j}}  \left\langle
 2 (\partial^\perp_{s})^2 \vec{k}^{i_j}+ |\vec{k}^{i_j}|^2 \vec{k}^{i_j} 
 -\vec{k}^{i_j},
\psi^{i_j} \right\rangle \,\mathrm{d}s\\
 &\quad+\sum_{j=1}^{m} \left[ 2 \left. \langle \vec{k}^{i_j},
 \partial_{s} \psi^{i_j}\rangle \right|_0^{\ell(\gamma^{i_j})} 
+  \left. \langle -2\partial^\perp_{s} \vec{k}^{i_j}
- |\vec{k}^{i_j}|^2\tau^{i_j}+\tau^{i_j}, \psi^{i_j}\rangle \right|_0^{\ell(\gamma^{i_j})} 
 \right] =0 \,.
\end{align*}
This immediately leads to the interior equations~\eqref{eq2}.\\
In order to get the boundary conditions, we can first set
\begin{equation*}
\psi^{i_j}(0)=\psi^{i_j}(\ell(\gamma^{i_j}))=0\,,
\end{equation*}
so that
\begin{equation*}
\sum_{j=1}^m \overline{\psi}_{s}^{i_j} k^{i_j}\big|_0^{\ell(\gamma^{i_j})}=0\,,
\end{equation*}
and by~\eqref{proprieta2} we get the first boundary condition~\eqref{boundarycond1}
Similarly, letting $\overline{\psi}_{s}^{i_j}(0)=\overline{\psi}_{s}^{i_j}(\ell(\gamma^{i_j}))=0$ 
and using~\eqref{proprieta1}, the second boundary condition~\eqref{boundarycond2} is achieved.\\
Now we prove that each $\gamma^i$ is an analytic curve. 
Fix $i=1$, and suppose that $\pi(0,1)$ is a junction of order $m$. 
The first boundary condition equation gives that
\begin{equation*}
k^1(0)= \sum_{\substack{(z_j,i_j)\in\pi^{-1}(p)\,: \\ z_j=\ell(\gamma^{i_j})}} k^{i_j}(\ell(\gamma^{i_j}))-\sum_{\substack{(z_j,i_j)\in\pi^{-1}(p)\,: \\ z_j=0,\,j\neq 1}} k^{i_j}(0)=:C_1\,.
\end{equation*}
While, since $\langle \partial_{s}^\perp \vec{k}^1(0), \nu^1(0)\rangle=\partial_{s} k^1(0)$, 
by multiplying the second boundary condition equation 
by $\nu^1(0)$ we can write that
\begin{equation*}
\begin{split}
\partial_{s} k^1(0) &= \frac{1}{2} \bigg\langle \sum_{\substack{(z_j,i_j)\in\pi^{-1}(p)\,: \\ z_j=\ell(\gamma^{i_j})}} \big[2 \partial_{s}^\perp \vec{k}^{i_j} +(k^{i_j})^2\tau^{i_j} -\tau^{i_j}\big]\big|_{\ell(\gamma^{i_j})}+\\
	&\qquad-\sum_{\substack{(z_j,i_j)\in\pi^{-1}(p)\,: \\ z_j=0,\,j\neq 1}} \big[2 \partial_{s}^\perp \vec{k}^{i_j} +(k^{i_j})^2\tau^{i_j} -\tau^{i_j}\big]\big|_{0} , \nu^1(0) \bigg\rangle=:C_2\,.
\end{split}
\end{equation*}
Therefore a direct application of the Cauchy-Kovaleskaya Theorem 
(see for instance~\cite[p.240]{evans}) on the Cauchy problem
\begin{equation*}
\begin{cases}
2\partial_{s}^2k^1+(k^1)^3-k^1=0 & \mbox{ on }[0,\varepsilon)\,,\\
k^1(0)=C_1\,,\\
\partial_{s}k^1(0)=C_2\,,
\end{cases}
\end{equation*}
gives that $k^1$ is real analytic on some interval $[0,\varepsilon)$. 
Analogously, the same holds on $(\ell(\gamma^1)-\varepsilon,\ell(\gamma^1)]$, 
and therefore $k^1$ is real analytic. 
Writing $\tau^1(s)=\left(\cos(\theta(s)),\sin(\theta(s)\right)$ we get that
\begin{equation*}
\partial_{s}\tau^1=(-\sin(\theta),\cos(\theta))\partial_{s}\theta\,,
\end{equation*}
so that
\begin{equation*}
	k^1(s)=\partial_{s}\theta(s).
\end{equation*}
Hence $\theta$ is analytic, that implies that $\partial_{s}\gamma^1=\tau^1$ is analytic, and so is $\gamma^1$.

By the arbitrary of the choice of $\gamma^1$, the result follows for each $\gamma^i$.
\end{proof}

\bibliographystyle{amsplain}
\bibliography{degenerate-elastic-networks}

\providecommand{\bysame}{\leavevmode\hbox to3em{\hrulefill}\thinspace}
\providecommand{\MR}{\relax\ifhmode\unskip\space\fi MR }
\providecommand{\MRhref}[2]{%
  \href{http://www.ams.org/mathscinet-getitem?mr=#1}{#2}
}
\providecommand{\href}[2]{#2}
\begin{thebibliography}{10}

\bibitem{AbToCurveSuperfici}
M.~Abate and F.~Tovena, \emph{Curves and surfaces}, Unitext, vol.~55, Springer,
  Milan, 2012, Translated from the 2006 Italian original by Daniele A. Gewurz.

\bibitem{alessandronikuwert}
Roberta Alessandroni and Ernst Kuwert, \emph{Local solutions to a free boundary
  problem for the {W}illmore functional}, Calc. Var. Partial Differential
  Equations \textbf{55} (2016), no.~2, Art. 24, 29.

\bibitem{baganu}
John~W. Barrett, Harald Garcke, and Robert N\"urnberg, \emph{Elastic flow with
  junctions: variational approximation and applications to nonlinear splines},
  Math. Models Methods Appl. Sci. \textbf{22} (2012), no.~11, 1250037, 57.

\bibitem{bellettinidalmasopaolini}
Giovanni Bellettini, Gianni Dal~Maso, and Maurizio Paolini,
  \emph{Semicontinuity and relaxation properties of a curvature depending
  functional in {$2$}{D}}, Ann. Scuola Norm. Sup. Pisa Cl. Sci. (4) \textbf{20}
  (1993), no.~2, 247--297.

\bibitem{bellettinimugnai04}
Giovanni Bellettini and Luca Mugnai, \emph{Characterization and representation
  of the lower semicontinuous envelope of the elastica functional}, Ann. Inst.
  H. Poincar\'{e} Anal. Non Lin\'{e}aire \textbf{21} (2004), no.~6, 839--880.

\bibitem{bellettinimugnai07}
\bysame, \emph{A varifolds representation of the relaxed elastica functional},
  J. Convex Anal. \textbf{14} (2007), no.~3, 543--564.

\bibitem{bellettinipaolini}
Giovanni Bellettini and Maurizio Paolini, \emph{Variational properties of an
  image segmentation functional depending on contours curvature}, Adv. Math.
  Sci. Appl. \textbf{5} (1995), no.~2, 681--715.

\bibitem{danovplu}
Anna Dall'Acqua, Matteo Novaga, and Alessandra Pluda, \emph{Minimal elastic
  networks}, to appear: Indiana Univ. Math. J., preprint arXiv:1712.09589.

\bibitem{daplu}
Anna Dall'Acqua and Alessandra Pluda, \emph{Some minimization problems for
  planar networks of elastic curves}, Geom. Flows \textbf{2} (2017), no.~1,
  105--124.

\bibitem{deckerlnickgrunauroeger}
Klaus Deckelnick, Hans-Christoph Grunau, and Matthias R\"{o}ger,
  \emph{Minimising a relaxed {W}illmore functional for graphs subject to
  boundary conditions}, Interfaces Free Bound. \textbf{19} (2017), no.~1,
  109--140.

\bibitem{dondl}
Patrick~W. Dondl, Antoine Lemenant, and Stephan Wojtowytsch, \emph{Phase field
  models for thin elastic structures with topological constraint}, Arch.
  Rational Mech. Anal. \textbf{223} (2017), no.~2, 693--736.

\bibitem{evans}
Lawrence~C. Evans, \emph{Partial differential equations}, Graduate Studies in
  Mathematics, vol.~19, American Mathematical Society, Providence, RI, 1998.

\bibitem{garckemenzelpluda2}
Harald Garcke, Julia Menzel, and Alessandra Pluda, \emph{Long time existence of
  solutions to an elastic flow of networks}, preprint 2019.

\bibitem{garckemenzelpluda}
\bysame, \emph{Willmore flow of planar networks}, J. Differential Equations
  \textbf{266} (2019), no.~4, 2019--2051.

\bibitem{masnounardi13b}
Simon Masnou and Giacomo Nardi, \emph{A coarea-type formula for the relaxation
  of a generalized elastica functional}, J. Convex Anal. \textbf{20} (2013),
  no.~3, 617--653.

\bibitem{masnounardi13a}
\bysame, \emph{Gradient {Y}oung measures, varifolds, and a generalized
  {W}illmore functional}, Adv. Calc. Var. \textbf{6} (2013), no.~4, 433--482.

\bibitem{menzel}
Julia Menzel, \emph{Phd thesis: Boundary value problems for generalized
  willmore flows}, Universit\"{a}t Regensburg (in preparation).

\bibitem{Mumford1994}
David Mumford, \emph{Elastica and computer vision}, pp.~491--506, Springer New
  York, New York, NY, 1994.

\bibitem{pozzetta}
Marco Pozzetta, \emph{On the {P}lateau--{D}ouglas problem for the {W}illmore
  energy of surfaces with planar boundary curves}, To appear: ESAIM:COCV, 2020.

\bibitem{pozzetta2}
\bysame, \emph{A varifold perspective on the p-elastic energy of planar sets},
  J. Convex Anal. \textbf{27} (2020), no.~3, 845--879.

\bibitem{schaetzle}
Reiner Sch\"{a}tzle, \emph{The {W}illmore boundary problem}, Calc. Var. Partial
  Differential Equations \textbf{37} (2010), no.~3-4, 275--302.

\bibitem{Truesdell}
Clifford~A. Truesdell, \emph{The influence of elasticity on analysis: the
  classic heritage}, Bull. Amer. Math. Soc. (N.S.) \textbf{9} (1983), no.~3,
  293--310. \MR{714991}

\end{thebibliography}

\end{document}